\theoremstyle{plain}
\newtheorem*{theorem*}{Theorem}
\newtheorem{theorem}{Theorem}[section]
\newtheorem{proposition}[theorem]{Proposition}
\newtheorem*{proposition*}{Proposition}
\newtheorem{corollary}[theorem]{Corollary}
\newtheorem*{corollary*}{Corollary}
\newtheorem{lemma}[theorem]{Lemma}
\newtheorem*{lemma*}{Lemma}
\theoremstyle{definition}
\newtheorem{remark}[theorem]{Remark}
\newtheorem*{remark*}{Remark}
\newtheorem{fact}[theorem]{Fact }
\newtheorem*{questions*}{Questions}
\newtheorem*{question*}{Question}
\theoremstyle{definition}
\newtheorem{definition}[theorem]{Definition}
\newtheorem*{definition*}{Definition}
\newcommand{\nc}{\newcommand}
\newcommand{\C}{{\mathbb C}}
\newcommand{\N}{{\mathbb N}}
\newcommand{\Z}{{\mathbb Z}}
\newcommand{\DD}{{\mathbb D}}
\newcommand{\R}{{\mathbb R}}
\newcommand{\T}{{\mathbb{T}}}
\newcommand{\Int}{\operatorname{Int}}
\newcommand{\Hol}{\operatorname{Hol}}
\newcommand{\mult}{\operatorname{mult}}
\newcommand{\dist}{\operatorname{dist}}
\newcommand{\sinc}{\operatorname{sinc}}
\nc{\Intl}{\Int_{l^{\infty}}}
\newcommand{\lra}{\longrightarrow}
\newcommand{\lmto}{\longmapsto}
\newcommand{\eps}{\varepsilon}
\newcommand{\vp}{\varphi}
\nc{\bea}{\begin{eqnarray}}
\nc{\eea}{\end{eqnarray}}
\nc{\beqa}{\begin{eqnarray*}}
\nc{\eeqa}{\end{eqnarray*}}
\nc{\Hi}{H^{\infty}}
\nc{\loi}{\ell^{\infty}}
\nc{\NL}{N^+\vert \Lambda}
\nc{\liL}{\lambda\in\Lambda}
\nc{\nn}{\nonumber}
\nc{\hf}{{\mathcal H}_{\Phi}}
\nc{\hF}{{\mathcal H}_{\Phi}}
\newenvironment{proof*}{\vskip 2mm\noindent {}}{$\blacksquare$ \vskip 2mm}
\numberwithin{equation}{section}
\renewcommand{\Im}{\mbox{Im}}
\nc{\card}{\operatorname{card}}
\nc{\sign}{\operatorname{sign}}
\nc{\spn}{\operatorname{span}}
\nc{\Lin}{\operatorname{Lin}}
\nc{\tsn}{\tilde{\sigma}_n}
\nc{\tsk}{\tilde{\sigma}_k}
\nc{\tskp}{\tilde{\sigma}_k^+}
\nc{\tskm}{\tilde{\sigma}_k^-}
\nc{\D}{\displaystyle}
\nc{\vt}{\vartheta}
\title[Uniform minimality and unconditionality]{Uniform minimality,
  unconditionality and interpolation in backward shift invariant spaces}
\author{Eric Amar \& Andreas Hartmann}
\address{Equipe d'Analyse \& G\'eom\'etrie,
Institut de Math\'ematiques de Bordeaux,
Universit\'e Bordeaux I, 351 cours de la Lib\'eration,
33405 Talence, France}
\email{Eric.Amar@math.u-bordeaux1.fr}
\email{Andreas.Hartmann@math.u-bordeaux1.fr}
\date{\today}
\keywords{Uniform minimality, unconditional bases, model spaces, 
Paley-Wiener spaces, interpolation, one-component inner functions}
\subjclass{30E05, 31A05}
\begin{document}

\begin{abstract}
We discuss relations between uniform minimality, unconditionality
and interpolation
for families of reproducing kernels in backward shift invariant
subspaces. This class of spaces contains as prominent examples
the Paley-Wiener spaces for which it is known that uniform minimality
does in general neither imply interpolation nor unconditionality. 
Hence, contrarily to
the situation of standard Hardy spaces (and other scales of spaces),
changing the size of the space seems in this context necessary to deduce  
unconditionality or interpolation from uniform minimality. 
Such a change can
take two directions: lowering the power of integration, or ``increasing''
the defining inner function (e.g.\ increasing the type in the
case of Paley-Wiener space). 
\end{abstract}

\maketitle

\section{Introduction}

A famous result by Carleson states that a sequence of
points $S=\{a_k\}$ in the unit disk $\DD=\{z\in\C:|z|<1\}$
is an interpolating
sequence for the space $\Hi$ of bounded analytic functions on $\DD$,
meaning that every bounded sequence on $S$ can be interpolated 
by a function $f$ %the trace o
in $\Hi$ on $S$, %is equal to $l^{\infty}$
i.e.\ $\Hi|S\supset l^{\infty}$,
if and only if the sequence $S$ satisfies the 
%following now-called 
Carleson condition:
\bea\label{eq1}
 \inf_{a\in S} |B_{a}(a)|=\delta>0,
\eea
where $B_{a}=\prod_{u\neq a}b_{u}$ is the
Blaschke product vanishing exactly on $S\setminus\{a\}$,
and $b_{a}(z)=\frac{|a|}{a}\frac{a-z}
{1-\overline{a}z}$ (see \cite{carl}). We will write
$S \in (C)$ for short when $S$ satisfies \eqref{eq1}.
Obviously in this situation we also have the embedding
$\Hi|\Lambda\subset l^{\infty}$, so that $S\in (C)$ is
equivalent to $\Hi|\Lambda = l^{\infty}$.
Subsequently it was shown by Shapiro and Shields \cite{SS} 
that for $p\in (1,\infty)$ a similar
result holds:
\beqa
 H^p|S\supset l^p(1-|a|^2)=\{(v_a)_{a\in S}:\sum_{a\in S} (1-|a|^2)
 |v_a|^p<\infty\}
\eeqa
if and only if $S\in (C)$. 
Again, it turns out that we also have $H^p|S\subset l^p(1-|a|^2)$
(the measure $\sum_{a\in S}(1-|a|^2)\delta_a$ is a so-called Carleson
measure), so that $S\in (C)$ is equivalent to
$H^p|S = l^p(1-|a|^2)$.
%What makes the situation more
%interesting for us in the case $1<p<\infty$ is 
Considering reproducing kernels
$k_{a}(z)=(1-\overline{a}z)^{-1}$
the interpolation condition and the Carleson condition
can be restated in terms of geometric properties of the
sequence $(k_{a})_{a\in S}$. More precisely the
Carleson condition is equivalent to
$(k_{a}/\|k_{a}\|_{p'})_a$
being uniformly minimal in $H^{p'}$, and the interpolating condition
$H^p|S=l^p(1-|a|)$ to $(k_{a}/\|k_{a}\|_{p'})_{a\in S}$
being an unconditional sequence in $H^{p'}$
(precise definitions will be given below).
Hence, another way of stating the interpolation result in
Hardy spaces is to say that a sequence of normalized reproducing
kernels in $H^{p'}$ is uniformly minimal if and only if it is
an unconditional basis in its span (since interpolation in the 
scale of Hardy spaces does not depend on $p$, the distinction
between $p$ and $p'$ may appear artificial here). This special situation
is not isolated. It turns out to be true in the Bergman
space (see \cite{SchS1}), and in Fock spaces and Paley-Wiener
spaces for certain indices of $p$ (see \cite{SchS2}).

More recently, in \cite{Amar} the first named author has given a method
allowing to deduce interpolation from
uniform minimality when the size of the space is increased
by lowering the power of integration.
This result requires that the underlying space is the closure
of a uniform algebra, and applies in particular to Hardy spaces
on the ball. %This situation is unfortunately not given

We would like to use some of the methods discussed in \cite{Amar}
to show that uniform minimality implies unconditionality in 
a bigger space for certain backward shift invariant subspaces $K^p_I$ 
for which the Paley-Wiener spaces are a particular instance. 
Recall that for an inner function $I$, $K^p_I=H^p\cap
I\overline{H^p_0}$ (when considered as a space of functions on $\T$), which
is equal to the orthogonal complement
of $IH^2$ when $p=2$. Note also that these spaces are projected
subspaces of $H^p$ ($1<p<\infty$), 
and the projection --- orthogonal when $p=2$ ---
is given by $P_I=IP_-\overline{I}$, where $P_-=Id-P_+$ and
$P_+$ is the Riesz projections of $f(e^{it})=\sum_{n\in\Z}a_ne^{int}\in
L^p(\T)$ onto the analytic part $\sum_{n\ge 0}a_ne^{int}$.
We would like to draw the attention of the reader to
the special situation when $I(z)=I_{\tau}(z):=\exp(2\tau
(z+1)/(z-1))$. Then, the space $K^p_I$ is isomorphic to the Paley-Wiener
space $PW^p_{\tau}$ of entire functions of exponential type $\tau$
and $p$-th power integrable on the real line (see
Section \ref{ss2.3}). By the Paley-Wiener
theorem, $PW^2_{\tau}$ is isometrically isomorphic to $L^2(-\tau,\tau)$.
Already in this ``simple'' case the description of interpolating sequences
is not known (see more comments below). 
There exist sufficient density conditions for interpolation
(or unconditionality) when $p=2$. 
%We discuss an example where 
They allow to check that a certain uniform minimal sequence,
which is not unconditional, becomes unconditional
%deduce unconditionality from uniform minimality
%in this case 
when we ``increase'' the inner function meaning
that we replace $I$ by $I^{1+\eps}$, $\eps>0$. (It is well known
that $K^2_I\subset K^2_{I^{1+\eps}}$ and even $K^2_{I^{1+\eps}}=
\overline{K^2_I+IK^2_{I^{\eps}}}$.)
The density conditions for $p=2$ do not seem to
%which seem to not
generalize to $p\neq 2$ (see Proposition
\ref{prop2.4} and comments at the end of Section \ref{ss2.3}), 
so that there is no easy argument that could show that
lowering the integration power without changing $I$ is
sufficient to deduce unconditionality from uniform minimality.
%which
This makes the problem extremely delicate.
%{\sc Add a possible comment on the difference between
%$p=2$ and $p\neq 2$ concerning density conditions}
So, in the general situation that we consider and where density
or other usable conditions are not known, it seems extremly %notoriously
difficult to deduce interpolation from uniform minimality only
be increasing the space in one direction (either adding factors
to $I$ or lowering the integration power $p$). Let us mention
however that under the assumption $I(\lambda_n)\to 0$ the equivalence
between uniform minimality and unconditionality in $K^2_I$ has
been established in \cite{HNP} (see also \cite{Fr} for
a vector valued version of this result).

Our results will 
require some conditions on the inner function such as
being one-component. This means that the level set $L(I,\eps)
=\{z\in \DD:|I(z)|<\eps\}$ of the inner function $I$ is
connected for some $\eps\in (0,1)$ (which is for instance the case
for $I_{\tau}$). One-component inner functions appear in
work by Aleksandrov, Treil-Volberg etc.\ in the connection with
embedding theorems and Carleson measures.

As a consequence of our discussions we state here a sample
result:

\begin{theorem}\label{thm1}
Let $I$ be a one-component singular inner function,
$S\subset \DD$, $1<p\le 2$. Suppose
that $\sup_{a\in S}|I(a)|<1$. If
$(k_{a}^I/\|k_{a}^I\|_{p'})_{a\in S}$
is uniformly minimal in $K^{p'}_I$, where $1/p+1/p'=1$, 
then for every $\eps>0$ and
for every $s<p$, 
$S$ is an interpolating sequence for $K^s_{I^{1+\eps}}$ and
$(k_{a}^I/\|k_{a}^I\|_{s'})_{a\in S}$
is an unconditional sequence in $K^{s'}_{I^{1+\eps}}$,
$1/s+1/s'=1$.
\end{theorem}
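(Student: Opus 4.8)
The plan is to connect uniform minimality of normalized reproducing kernels to a Carleson-type separation condition, then use the method of lowering the integration power (from \cite{Amar}) together with the one-component hypothesis to promote this to interpolation and unconditionality in the enlarged space $K^s_{I^{1+\eps}}$. Let me think about how this would actually go.

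First I would translate the hypothesis. Uniform minimality of $(k_a^I/\|k_a^I\|_{p'})_{a\in S}$ in $K^{p'}_I$ means there is a biorthogonal system with uniformly bounded norms, equivalently $\inf_{a\in S}\dist(k_a^I/\|k_a^I\|_{p'},\spn\{k_b^I:b\in S,\,b\neq a\})>0$. My first goal is to extract from this a genuine geometric separation of the sequence $S$ in the pseudohyperbolic metric. This is where the condition $\sup_{a\in S}|I(a)|<1$ enters: it guarantees that the reproducing kernels $k_a^I$ are comparable to the Hardy-space kernels $k_a$ near the points $a$, so that the model-space norm $\|k_a^I\|_{p'}$ is comparable to $(1-|a|)^{-1/p}$ and the reproducing kernels behave like their Hardy counterparts. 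Uniform minimality should then force a separation condition on $S$ relative to the level sets of $I$, i.e.\ the points are pseudohyperbolically separated and sit at bounded $I$-distance.

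\medskip

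Next I would invoke the one-component property of $I$. The crucial feature of one-component inner functions (Aleksandrov, Treil--Volberg) is that on the level set $L(I,\eps)$ one has good control of $|I|$ and of the associated Carleson measures: separated sequences inside a level set generate Carleson measures for $K^p_I$, and the reproducing kernels are uniformly comparable across the $L^p$-scale. I would use this to show that the separation extracted in the first step, combined with $\sup_a|I(a)|<1$ (which places $S$ inside a level set $L(I,\eps_0)$ for some $\eps_0<1$), yields a Carleson-measure estimate $\sum_{a\in S}(1-|a|)\,|f(a)|^p\le C\|f\|_{K^p_I}^p$. The point of passing to $I^{1+\eps}$ is that $K^s_I\subset K^s_{I^{1+\eps}}$ and the larger inner function relaxes the level-set geometry just enough that the separated sequence becomes interpolating; this is the mechanism already illustrated in the $p=2$ density discussion of the excerpt, now to be made to work via the uniform-algebra method rather than density.

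\medskip

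The main step, and the one I expect to be the principal obstacle, is deducing \emph{interpolation} (the onto part of the restriction map) in $K^s_{I^{1+\eps}}$ for $s<p$ from the separation/Carleson data, and this is exactly where I would import the technique of \cite{Amar}. That method produces an interpolating function by a functional-analytic duality/averaging argument that requires the ambient space to be (the closure of) a uniform algebra, so the key is to realize $K^s_{I^{1+\eps}}$, or an appropriate dense subspace, in a form to which that machinery applies, and to verify that lowering the power from $p'$ to $s'>p'$ (equivalently $s<p$) is precisely the slack needed to convert the bounded biorthogonal/minimality data into a bounded interpolation operator. The delicate points will be: (i) controlling the loss incurred in passing from the $K^{p'}_I$-minimality to estimates valid in $K^{s'}_{I^{1+\eps}}$, for which the one-component hypothesis supplies the uniform $L^p$-comparability of kernels; and (ii) checking that interpolation in the larger space implies that $(k_a^I/\|k_a^I\|_{s'})_{a\in S}$ is unconditional in $K^{s'}_{I^{1+\eps}}$, which by the general duality between interpolation and unconditionality of normalized reproducing kernels should follow once the restriction map is bounded and onto with a bounded right inverse. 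The hard part will be making the $\eps>0$ enlargement interact correctly with the strict inequality $s<p$ so that both the Carleson (boundedness) and interpolation (surjectivity) halves hold simultaneously.
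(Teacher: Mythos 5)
Your outline gets the two bookends right: using Aleksandrov's estimates together with $\sup_{a\in S}|I(a)|<1$ to compare $k_a^I$ with the Hardy kernels $k_a$, and the final duality step by which interpolation plus a Carleson embedding yields unconditionality of the normalized kernels (this is Proposition \ref{prop2.2}/Corollary \ref{cor2.3} in the paper). But the central step --- actually producing an interpolating function in $K^s_{I^{1+\eps}}$ --- is where you defer entirely to the uniform-algebra machinery of \cite{Amar}, and that route fails: $K^s_{I^{1+\eps}}$ is not the closure of a uniform algebra (model spaces are not even closed under multiplication), and no realization of it as such is available. The paper imports from \cite{Amar} only a probabilistic technique, and the missing idea is the following product construction with Bernoulli variables. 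Write $I^{1+\eps}=IE$ with $E=I^{\eps}$. Given $\nu\in\ell^s$, factor $\nu_a=\lambda_a\mu_a$ with $\lambda\in\ell^p$, $\mu\in\ell^q$, $1/s=1/p+1/q$ (this is where $s<p$ enters), and set
\[
 h=\mathbb{E}\Big[\Big(\sum_{a\in S}\lambda_a c_a\epsilon_a\rho_{p,a}\Big)
 \Big(\sum_{a\in S}\mu_a\epsilon_a k^E_{q,a}\Big)\Big],
\]
where $(\rho_{p,a})_{a\in S}\subset K^p_I$ is the dual system furnished by uniform minimality, $k^E_{q,a}=k_a^E/\|k_a^E\|_q$, $(\epsilon_a)$ are independent Bernoulli signs, and the $c_a$ are uniformly bounded constants, their boundedness resting on the Aleksandrov estimates and on $\|k_a^{J}\|_{s'}\simeq\|k_a^{E}\|_{s'}\|k_a^I\|_{p'}/\|k_a^{E}\|_{p'}$, which is where singularity of $I$ is used (case (3) of Remark \ref{rem4.1}). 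Biorthogonality and $k_a^E(a)=\|k_a^E\|_2^2$ give $h(a)=\nu_a\|k_a^{J}\|_{s'}$, and the bound $\|h\|_s\lesssim\|\nu\|_{\ell^s}$ follows from H\"older, Khinchin's inequalities (using $p\le 2$ and $\sup_a\|\rho_{p,a}\|_p<\infty$ for the first factor), and a weak $q$-Carleson property of $S$ in $K^q_E$ for the second factor. This shows concretely why both enlargements are needed: the factor $E=I^{\eps}$ supplies the second function in the product (so that the product lands in $K^s_{IE}$), and $s<p$ supplies the H\"older room $1/q>0$. Your proposal never identifies this mechanism.

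Two further corrections. First, your heuristic that the larger inner function ``relaxes the level-set geometry just enough that the separated sequence becomes interpolating'' is exactly the density mechanism which the paper warns does not extend beyond the $p=2$ Paley--Wiener case; in the actual proof $\eps>0$ plays no geometric role, only the algebraic role above. Second, pseudohyperbolic separation is too weak a conclusion to draw from uniform minimality: what the paper derives is the full Carleson condition $S\in(C)$, obtained by observing that $P_I\rho_{p,a}=\rho_{p,a}$, so that the dual system in $K^p_I$ is simultaneously dual to the normalized Hardy kernels; hence $(k_a/\|k_a\|_{p'})_{a\in S}$ is uniformly minimal in $H^p$, which by Shapiro--Shields is equivalent to $S\in(C)$, giving unconditionality in every $H^r$ and, after projecting by $P_I$ and using $\|k_a\|_r\simeq\|k_a^I\|_r$, the $r$-Carleson property of $S$ for $K^r_I$ for all $1<r<\infty$. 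This single computation furnishes both the weak $q$-Carleson hypothesis needed in the interpolation lemma and the $s'$-Carleson embedding needed to convert interpolation into unconditionality.
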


As already pointed out, a characterization of
interpolating sequences already for Paley-Wiener spaces is
% to our knowledge 
unknown for general $p$
(when $p=\infty$ Beurling gives a characterization, and
for $0<p\le 1$, see \cite{Fl}; a crucial difference between these
cases and $1<p<\infty$ is the boundedness of the Hilbert transform
on $L^p$). For the case of complete interpolating
sequences in $PW^p_{\tau}$, i.e.\ interpolating sequences for which the
interpolating functions are unique, these are characterized
in \cite{LS} appealing to the Carleson condition and the
Muckenhoupt $A^p$-condition for some function associated with
the generating function of $S$. 
Sufficient conditions are pointed
out in \cite{SchS2} using a kind of uniform zero-set condition in the
spirit of Beurling. Such a condition cannot be necessary since there are
complete interpolating sequences in the Paley-Wiener spaces. 
Another approach is based on invertibility
properties of $P_I|K^p_B$, where $B=\prod_{a\in S}b_{a}$
and discussed in the seminal paper \cite{HNP} (see 
also \cite{Nik}). 
Once having
observed that the Carleson condition for $S$ is necessary
(under the condition $\sup_{a\in S}|I(a)|<1$),
and so $(k_{a}/\|k_{a}\|_p)_{a\in S}$ is an
unconditional basis for $K^p_B$,
the left invertibility of $P_I|K_B^p$ guarantees that
$(k_{a}^I/\|k_{a}^I\|_p)_{a\in S}$ is still an
unconditional sequence. % (if $\sup_{a\in }|I(\lambda_n)|<1$).
The invertibility properties of $P_I|K_B^p$ can be reduced
to the invertibility properties of a certain Toeplitz
operator ($T_{I\overline{B}}$). Again, and also in this approach,
one can feel an essential difference between complete
interpolating sequences and not necessary complete interpolating sequences.
The case of complete interpolating sequences corresponds
to invertibility of $T_{I\overline{B}}$, and a criterion of
invertibility of Toeplitz operators is known. This is the
theorem of Devinatz and Widom (see e.g.\ \cite[Theorem B4.3.1]{Nik}) for
$p=2$ and Rochberg (see \cite{Ro}) for $1<p<\infty$,
%(see also \cite{Sim}), 
and again it is based on the Muckenhoupt
$(A_p)$ condition (or the Helson-Szeg\H{o} condition in
case $p=2$), this time for some function $h\in H^p$
such that $I\overline{B}=\overline{h}/h$.
A useful description of left-invertibility of Toeplitz operators,
the situation corresponding to general not necessarily
complete interpolating sequences, is not available. For the case
$p=2$ an implicit condition is given in \cite{HNP}, and
a condition based on extremal functions in the kernel of the
adjoint $T_{\overline{I}B}$ can be found in \cite{HSS}.

%In Section \ref{ss2.3} we will make  more comments 
%on known relations between uniform minimality and unconditionality.
%As a particular instance we will discuss some results in the
%setting of Paley-Wiener spaces which are known to be special
%cases of backward shift invariant subspaces.

The paper is organized as follows. In the next section we introduce
the necessary material on uniform minimality, dual boundedness 
and unconditionality.
A characterization of unconditional bases of point evaluations
(or reproducing kernels)
will be given in terms of interpolation and embedding. We will
also discuss some Carleson-type conditions which are naturally
connected with embedding problems. Section \ref{ss2.3} is
devoted to a longer discussion of the situation in the Paley-Wiener
spaces. We essentially put the known material in the perspective
of our work. This should convince the reader that it is difficult 
to get better result. In the last section we give our main result
Theorem \ref{thm3.2} which as a special case contains
Theorem \ref{thm1}.

%Take for instance the case of the Fock space
%$F^p_{\alpha}$ which is the space of entire functions such that
%\beqa
% \|f\|_{\alpha,p}^p=\int_{\C}|f(z)|^pe^{-p\alpha |z|^2}dm(z)<\infty.
%\eeqa
%The following example is discussed in \cite{SchS2}. There 
%exists a uniformly discrete sequence $\Lambda$, i.e.\ the 
%euclidean distance points between the points of $\Lambda$
%is uniformly bounded away from 0, and %a family of functions
%a function $G$ in $F^p_{\alpha}$ which satisfies $G(z)\simeq e^{\alpha
%|z|^2} d(z,\Lambda)$. Setting $G_k(z)=G(z)/(z-\lambda_k)$,
%we have $\sup_k \|G_k\|_{p,\alpha}<\infty$ when $p>2$ and
%$G_k(\lambda_k)\ge c e^{\alpha |\lambda_k|^2}$ implying
%that the sequence of reproducing kernels is uniformly
%minimal for these $p$ ({\sc here is something to check}).
%However, as follows from the density characterization of
%interpolating sequences in Fock spaces (see \cite{Se}),
%the corresponding sequence is not interpolating, and since
%interpolation in Fock spaces does not depend on $p$, $\Lambda$
%is in particular not interpolating for no $F^q_{\alpha}$, $2<q\le p$,
%whereas the sequence of reproducing kernels will be
%uniformly minimal for every $2<q<p$ since the construction
%of $G_k$ does not depend on $q$.

%The same family furni

%We are more interested in Paley-Wiener type spaces where
%similar observations can be made. These will be done in
%Section \ref{S3}

\section{Preliminaries}

\subsection{Geometric properties of families of vectors of
Banach spaces} %: case of $H^p$}

We begin with some observations in the classical $H^p$
concerning the relation between uniform minimality and
unconditionality.
Recall that the reproducing kernel of $H^p$ in $a\in \DD$
is given by $k_{a}(z)=(1-\overline{a}z)^{-1}$.
The Carleson condition $\inf_{a\in S}|B_a (a)|\ge \delta>0$ 
can then be restated as 
$(k_{a}/\|k_{a}\|_{p'})_{a\in S}$ being a uniformly minimal 
sequence in $H^{p'}$ (which is equivalent here to
$(k_{a}/\|k_{a}\|_{p})_{a\in S}$ being uniformly minimal
in $H^p$). Let us explain this a little bit more.
By definition a sequence of normalized vectors $(x_n)_n$
in a Banach space $X$ is uniformly minimal if
\bea\label{unifmin}
 \inf_n \dist (x_n,\bigvee_{k\neq n}x_k)=\delta>0.
\eea
(Here $\bigvee_i x_i$ denotes the closed linear span of the
vectors $x_i$.) By the Hahn-Banach theorem this is equivalent
to the existence of a sequence of functionals $(\vp_n)_n$ in
$X^*$ such that $\vp_n(x_k)=\delta_{nk}$, where $\delta_{nk}$ is
the usual Kronecker symbol, and $\sup_n\|\vp\|_{X^*}<\infty$. 
In our situation, setting 
\beqa
 {\vp}_a= \frac{B_{a}}{B_{a}(a)}
 \frac{k_{a}}{k_{a}(a)}\|k_a\|_{p},
\eeqa
%with $c_a=\|k_a\|_p\|k_a\|_{p'}/k_a(a)$. 
we get
\beqa
 \langle {\vp_a},\frac{k_{b}}{\|k_{b}\|_p}\rangle
 =\delta_{ab}.
%\frac{1}{\|k_{a}\|_p} 
% \frac{k_{a}(a)}{\|k_{a}\|_q}.
\eeqa
Since $\|k_{a}\|_s\simeq (1-|a|^2)^{1-1/s}$ we moreover have
$\sup_{a\in S}\|\vp_a\|_q< \infty$.
%\beqa
% \langle \tilde{\vp}_a,\frac{k_{b}}{\|k_{b}\|_p}\rangle
% \simeq \delta_{nl},
%\eeqa
%meaning that the this relation is 0 when $n\neq l$, and comparable 
%to a uniform constant when $n=l$. So, renormalizing $\tilde{\vp}_a$
%we obtain a dual family $\vp_a=\alpha_a\tilde{\vp}_a$ with
%$\alpha_a\simeq 1$ and 
%\beqa
% \langle{\vp_a},\frac{k_{b}}{\|k_{b}\|_p}\rangle
% =\delta_{nl}
%\eeqa
%and clearly $0<\inf\|\vp_a\|_q\le \sup_a\|\vp_a\|_q<\infty$.
Another way of viewing the uniform minimality condition when $p=2$
is given in terms of angles: a sequence $(x_n)_n$ of vectors
in a Hilbert space %$H$ 
is uniformly minimal if the angles between $x_n$ and $\bigvee_{k\neq n}
x_k$ are uniformly bounded away from zero.

A notion closely related with uniform minimality is that
of dual boundedness (see \cite{Amar}). Let us give a general
definition
\begin{definition}\label{defdb}
Let $X\subset \Hol(\Omega)$ be a reflexive Banach space of 
holomorphic functions
on a domain $\Omega$. Suppose that the point evaluations $E_z$ are
continuous for every $z\in \Omega$. 
A sequence $S\subset \Omega$ is called dual-bounded if
the sequence $(E_a/\|E_a\|_{X^*})_{a\in S}$ of reproducing kernels
is uniformly minimal.
\end{definition}

Again, by the Hahn-Banach theorem this means that there exists a
sequence $(\rho_a)_{a\in S}$ of elements in $X$ ($=X^{**}$)
with uniformly bounded norm $\sup_{a\in S}\|\rho_a\|_X<\infty$
and $\langle \rho_a,E_b/\|E_b\|_{X^*}\rangle=\delta_{ab}$, i.e.\
$\rho_a(b)=\delta_{ab}\|E_b\|_{X^*}$.

This condition is termed weak interpolation in \cite{SchS2}.

%On the other hand, when
Let us discuss the unconditionality. Recall that a
basis $(x_n)_n$ of vectors in a Banach space $X$ is an
unconditional basis if for every $x\in X$, there exists a
numerical sequence $(\alpha_n)$ such that the sum
$\sum_n \alpha_n x_n$ converges to $x$, and for every
sequence of signs $\eps=(\eps_n)$, the sum $\sum_n\eps_n\alpha_n x_n$
converges in $X$ to a vector $x_{\eps}$ with norm comparable to $\|x\|$.
We will discuss the interpolation condition $H^p|\Lambda\supset
l^p(1-|a|^2)$ in the light of this definition using reproducing kernels.
First recall from \cite{SS} that we have
$H^p|\Lambda=l^p(1-|a|^2)$.
%in terms of reproducing kernels. 
%By \cite{SS}, this is equivalent 
Let $B=B_{S}$ be the
Blaschke product vanishing on $S$. Set $K^p_B=H^p
\cap B\overline{H^p_0}$, where $H^p_0=zH^p$. The space $K^p_B$
is a backward shift invariant subspace. Also, $K^p_B
=\bigvee_{a\in S} k_{a}$, and $H^p=K^p_B+BH^p$ ($K^p_B=P_BH^p$ is
a projected space).
%: $K^p_B=P_B H^p$ with $P_B=BP_-\overline{B}$,
%$P_-=Id-P_+$ and $P_+$ is the Riesz projection from $L^p$
%onto $H^p$). 
So the interpolation condition is equivalent
to $K^p_B|\Lambda=l^p(1-|a|^2)$, and since the interpolation
problem has unique solution in $K^p_B$, we have 
for every $f\in K^p_B$, $\|f\|_p^p\simeq \sum_n (1-|a|^2)
|f(a)|^p$. Clearly under this
condition the functions $\vp_a$ introduced above exist and 
are in $K^p_B$. Then for every finite sequence $(v_a)$ 
and every  sequence of signs $(\eps_n)$ we
have
\beqa
 \|\sum_{a\in S} \eps_a v_a\vp_a\|_p^p
 \simeq \sum_{a\in S} (1-|a|^2)|\eps_a|^p|v_n|^p
 \simeq \sum_{a\in S} (1-|a|^2)|v_n|^p 
\eeqa
which shows that $(\vp_a)_a$ is an unconditional basis in $K^p_B$.
Then $(k_{a}/\|k_{a}\|_p)$ is also an unconditional
basis in $K^p_B$.

Again, the unconditionality can be expressed in terms of angles
when $p=2$: a sequence $(x_n)_n$ of vectors in a Hilbert space
is unconditional if the angles between $\bigvee_{k\in \sigma}x_k$
and $\bigvee_{k\in \N\setminus\sigma} x_k$ is uniformly bounded
away from zero for every $\sigma\in \N$.

So the interpolation results tell us that in $H^p$ a sequence
of reproducing kernels is uniformly minimal if and only if
it is an unconditional sequence. Such results also hold in
other spaces like e.g.\ Bergman spaces (see \cite{SchS1})
and in Fock and Paley-Wiener spaces for certain values of $p$
(see \cite{SchS2}).

We will be interested in the situation in backward shift invariant
subspaces $K^p_I$.

\subsection{Unconditional bases and interpolation}

In this section we will establish a general link between
unconditional basis on the one hand and interpolation with
an additional embedding property on the other hand. It turns
out that this link can be reformulated, in the spirit of 
\cite[Theorem1.2]{ni78}, in abstract
terms without appealing to the notion of interpolation.
We will start with this general result before coming back
to the special context of interpolation.

Suppose that $X$ is a reflexive Banach space, and 
let $(y_n)_n$ be a sequence of normalized elements in
$X^*$ that we suppose at least minimal:
$\dist (y_n,\bigvee_{k\neq n}y_k)>0$ for every $n\in \N$. 
We set $Y=\bigvee y_n$ and $N:=Y^{\perp}\subset (X^*)^*=X$. 
By the minimality condition
there exists a sequence $(x_n)_n\in X^{**}=X$ such that 
$\langle x_n,y_k\rangle_{X=X^{**},X^*}=\delta_{n,k}$, $n,k\in \N$.

For a sequence space $l$, we consider the canonical system
$\{e_n\}_n$ where $e_n=(\delta_{nk})_k$. The space $l$ will be
called ideal if whenever $(a_n)_n\in l$ and $|b_n|\le |a_n|$,
$n\in \N$, then $(b_n)\in l$.
Recall also that a family of vectors in a Banach space is called
fundamental if it generates a dense set in the Banach space.
Observe that the canonical system is an unconditional basis in $l$
if and only if $l$ is ideal and the canonical system is fundamental
in $l$.

We obtain the following result.

\begin{proposition}
Let $X$ be a reflexive Banach space.
With the above notation, the following assertions are
equivalent.
\begin{enumerate}
\item The sequence $(y_n)_n$ is an unconditional basis in 
$Y=\bigvee_n y_n$.
\item The sequence 
%$(\frac{\D x_n+N}{\|\D x_n+N\|_{X/N}})_n$ 
$(x_n+N)_n$ 
is an unconditional basis in $X/N$ (in general not normalized).
\item There exists two reflexive %and ideal
Banach sequence spaces $l_1$, $l_2$, in which
the respective canonical systems are unconditional bases
%are fundamental %generate dense sets 
and such that
\begin{itemize}
\item[(i)] The set of generalized Fourier coefficients of $X$ contains
$l_1$:
\beqa
% \left\{\Big(\|x_n+N\|\langle x,y_n \rangle_{X,X^*}\Big)_n:
%  x\in X\right\}\supset l_1,
 \{\langle x,y_n \rangle_{X,X^*})_n:
  x\in X\}\supset l_1,
\eeqa
\item[(ii)] for every $\mu=(\mu_n)_n\in l_2$,
\beqa
 \|\sum_n \mu_n y_n\|_{X^*}\lesssim \|\mu\|_{l_2};
\eeqa
\end{itemize}
moreover $l_2\simeq l_1^*$ and the duality of $l_1$ and $l_1^*\simeq
l^2$ is given by $\langle (\alpha_n)_n,(\mu_n)_n\rangle_{l_1,l_2}
=\sum_n \alpha_n\mu_n$.
\end{enumerate}
\end{proposition}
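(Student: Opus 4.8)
The plan is to prove the equivalences in a cycle $(1)\Rightarrow(3)\Rightarrow(2)\Rightarrow(1)$, exploiting the biorthogonal structure set up before the statement: the minimal system $(y_n)_n\subset X^*$ with biorthogonal vectors $(x_n)_n\subset X$ satisfying $\langle x_n,y_k\rangle=\delta_{nk}$, and the annihilator $N=Y^{\perp}\subset X$. The conceptual content is that unconditionality of $(y_n)_n$ in $Y$ is exactly a two-sided (upper and lower) norm estimate, which splits naturally into one estimate controlling the map $x\mapsto(\langle x,y_n\rangle)_n$ and a dual estimate controlling $\mu\mapsto\sum_n\mu_n y_n$. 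The reflexivity of $X$ is what allows me to pass between these two directions by duality, and the two sequence spaces $l_1,l_2$ are precisely the range of the coefficient map and the domain of the synthesis map, respectively.

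First I would treat $(1)\Leftrightarrow(2)$ as essentially a duality transposition. Since $(y_n)_n$ is an unconditional basis of $Y$, it has an associated biorthogonal system in $Y^*$; but $Y^*=X^*{}^*/Y^{\perp}=X/N$ by reflexivity, and under this identification the functionals biorthogonal to $(y_n)_n$ are exactly the cosets $(x_n+N)_n$. The standard fact that a sequence is an unconditional basis of a reflexive space if and only if its biorthogonal system is an unconditional basis of the dual then gives the equivalence directly; I must only check that $(x_n+N)_n$ is fundamental in $X/N$, which follows because $Y=\bigvee y_n$ forces any functional on $X/N$ annihilating all $x_n+N$ to vanish. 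This is where the (in general not normalized) caveat enters, since the $x_n$ need not have comparable norms.

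For $(1)\Rightarrow(3)$ I would unwind the definition of unconditional basis into the two-sided estimate: for finitely supported scalars, $\|\sum_n\mu_n y_n\|_{X^*}$ is comparable to a fixed ideal sequence norm of $(\mu_n)_n$, independently of signs. I set $l_2$ to be the completion of finitely supported sequences under $\|\mu\|_{l_2}:=\|\sum_n\mu_n y_n\|_{X^*}$; unconditionality makes the canonical basis unconditional in $l_2$, and the upper estimate is (ii) by construction. I then define $l_1:=l_2^*$ with the canonical pairing $\langle(\alpha_n)_n,(\mu_n)_n\rangle=\sum_n\alpha_n\mu_n$; reflexivity of $Y$ (inherited from $X$) gives reflexivity of $l_2$ and hence of $l_1$, and the canonical system is again an unconditional basis in $l_1$. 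For (i), I must show every sequence $(\alpha_n)_n\in l_1$ arises as $(\langle x,y_n\rangle)_n$ for some $x\in X$: given such $\alpha$, the functional $\mu\mapsto\sum_n\alpha_n\mu_n$ on $l_2$ transports, via the synthesis isomorphism $\mu\mapsto\sum_n\mu_n y_n$ from $l_2$ onto $Y$, to a bounded functional on $Y\subset X^*$, i.e.\ an element $x\in X^{**}=X$ by reflexivity, and $\langle x,y_n\rangle=\alpha_n$ as required.

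Finally $(3)\Rightarrow(1)$: property (ii) is the upper unconditional estimate $\|\sum_n\mu_n y_n\|_{X^*}\lesssim\|\mu\|_{l_2}$ directly, while property (i) combined with the duality $l_2\simeq l_1^*$ yields the matching lower estimate, since for finitely supported $\mu$ one has $\|\mu\|_{l_2}=\sup\{|\sum_n\alpha_n\mu_n|:\|\alpha\|_{l_1}\le 1\}$ and each such $\alpha$ is realized by some $x\in X$ with controlled norm via (i), giving $\|\mu\|_{l_2}\lesssim\|\sum_n\mu_n y_n\|_{X^*}$ by testing against $x$; the uniformity of the comparison over sign changes is automatic because the canonical basis of $l_1$ (hence of $l_2$) is unconditional. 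The main obstacle I expect is the bookkeeping in $(1)\Rightarrow(3)$: verifying that the coefficient map's range is genuinely all of $l_1$ rather than merely containing the finitely supported sequences, and ensuring the open mapping / reflexivity arguments identifying $Y^*$ with $X/N$ and $l_1$ with $l_2^*$ are applied consistently. Everything else reduces to transcribing the definition of unconditionality into quantitative two-sided inequalities and pairing them through the reflexive duality.
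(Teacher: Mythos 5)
Your overall architecture matches the paper's: the equivalence $(1)\Leftrightarrow(2)$ via the biorthogonal system $((y_n)_n,(x_n+N)_n)$ and the identification $Y^*=X/N$, followed by $(1)\Rightarrow(3)$ and $(3)\Rightarrow(1)$. Your $(1)\Rightarrow(3)$ is a legitimate variant: where the paper invokes Nikolski's multiplier theorem and its accompanying lemma to produce $l_2$ (and uses assertion (2) to produce $l_1$), you build $l_2$ directly as the completion of the finitely supported sequences under $\|\mu\|_{l_2}=\|\sum_n\mu_n y_n\|_{X^*}$ and set $l_1=l_2^*$, recovering (i) by Hahn--Banach extension of $\alpha\circ T^{-1}$ from $Y$ to all of $X^*$ plus reflexivity. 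That is sound and more self-contained, though you should say explicitly that the Hahn--Banach extension is needed (a functional on the subspace $Y\subset X^*$ is not yet an element of $X^{**}$), and that the canonical system of coordinate functionals is an unconditional basis of $l_1=l_2^*$ because every basis of a reflexive space is shrinking.

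The genuine gap is in $(3)\Rightarrow(1)$, at the words ``each such $\alpha$ is realized by some $x\in X$ with controlled norm via (i)''. Condition (i) is a purely set-theoretic inclusion: it guarantees, for each $\alpha\in l_1$, the existence of \emph{some} $x$ with $\langle x,y_n\rangle=\alpha_n$, but it gives no bound on $\|x\|_X$ in terms of $\|\alpha\|_{l_1}$; without such a bound your testing argument does not yield the lower estimate $\|\mu\|_{l_2}\lesssim\|\sum_n\mu_n y_n\|_{X^*}$. Supplying this norm control is exactly the core of the paper's proof of $(3)\Rightarrow(1)$: one defines $A:l_1\to X/N$, $\alpha\mapsto x_\alpha+N$ (well defined because two realizations of $\alpha$ differ by an element of $N=Y^{\perp}$), checks that $A$ has closed graph --- using that convergence in $l_1$ implies coordinate convergence, since the canonical system is a basis of $l_1$, and that each $\langle\cdot,y_n\rangle$ descends to a continuous functional on $X/N$ --- and concludes from the closed graph theorem that $A$ is bounded. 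Only then can a representative $x$ of $A\alpha$ be chosen with $\|x\|_X\lesssim\|\alpha\|_{l_1}$, and since elements of $N$ pair to zero against $\sum_n\mu_n y_n\in Y$, the duality computation goes through. So your outline is repairable, but as written it assumes precisely the quantitative statement that has to be proved, and the missing ingredient (an automatic-continuity argument) is the nontrivial step of the paper's argument.
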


This theorem is in the spirit of \cite[Theorem 1.2]{ni78}. However,
in Nikolski's theorem there does not really appear
the condition (i) together with an embedding of type (ii). 
%Moreover we do not require $(y_n)_n$ to generate a dense set in
%$X^*$. 
The condition (i) will later on play the r\^ole of the interpolation
part.

\begin{proof}. Observe first that $Y^*=(X^*)^*/Y^{\perp}=X/N$.
Moreover, for every $u\in N=Y^{\perp}$, $\langle x_n+u,y_k\rangle
=\langle x_n,y_n\rangle=\delta_{nk}$, and hence 
$((y_n)_n,(x_n+N)_n)$ is a biorthogonal system in $(Y,Y^*)$.
By the general theory (see for instance \cite[Corollary I.12.2,
Theorem II.17.7]{sing}) we obtain the equivalence of (1) and (2).

Let us now prove that (1) and (2) imply (3).
%We first 
By \cite[Theorem 1.1]{ni78} the sequence $(y_n)_{n}$ 
is an unconditional sequence in $Y$ %its span 
if and only if the multiplier space $\mult(y_n):=\{\mu=(\mu_n)_{n}:
T_{\mu}:\Lin (y_n)\lra \Lin(y_n)$, $\sum_{finite}\alpha_ny_n
\lmto \sum_{finite}\mu_n\alpha_n y_n$ extends to a bounded
operator on $Y\}$ is equal to $l^{\infty}$.
And this, by \cite[Lemma 1.2]{ni78} is equivalent to the
existence of a sequence space $l_2$ in which the canonical system
is an unconditional basis %generates a dense set 
such that $(y_n)_{n}$ is a $l_2$-basis, which means that %$\bigvee_{a\in S}E_a
%$X_0\simeq l_0(E_a):=\{(\alpha_a)_{a\in S}:
%(\alpha_a \|E_a\|_{X^*})\in l_0\}$, in other words
%the mapping $(\alpha_a)_{a\in S}\lmto \sum_{a\in S}\alpha_a E_a$
%is an isomorphism from $l_0(E_a)$ onto $X_0$, or
%equivalently 
\beqa
 T:l_2&\lra&Y\\
 (\mu_n)_{n}&\lmto& \sum_{n}\mu_n y_n
\eeqa
is an isomorphism. Note that $Y$
is reflexive as a closed subspace of the reflexive Banach space
$X^*$, and so is $l_2$.

For exactly the same reason, by (2) there exists a sequence space $l_1$ 
with the required properties such that
\beqa
 S:l_1&\lra&X/N\\
% (\alpha_n)_{n}&\lmto& \sum_{n}\alpha_n \frac{x_n+N}{\|x_n+N\|}
% =\sum_{n}\alpha_n \frac{x_n}{\|x_n+N\|} +N=:x_{\alpha}+N
 (\alpha_n)_{n}&\lmto& \sum_{n}\alpha_n x_n=:x_{\alpha}+N
\eeqa
is an isomorphism. Note that $X/N$
is reflexive as a quotient space of the reflexive Banach space
$X$, and so is $l_1$. Take $(\alpha_n)_n\in l_1$, then
$S((\alpha_n)_n)=x_{\alpha}+N\in X/N$ for a suitable $x_{\alpha}\in X$.
%Now $(\langle x_{\alpha},y_n\rangle)_n
%=(\langle \sum_k\alpha_kx_k/\|x_n+N\|,y_n\rangle)_n=
%(\alpha_n/\|x_n+N\|)_n$ (note that
Now $(\langle x_{\alpha},y_n\rangle)_n
=(\langle \sum_k\alpha_kx_k,y_n\rangle)_n=
(\alpha_n)_n$ (note that
%$\sum_k\alpha_k\frac{x_k}{\|x_k+N\|}+N$ 
$\sum_k\alpha_kx_k+N$ 
converges in $X/N$).
%So $(\alpha_n)_n\in \{(\langle x,y_n\rangle\|x_k+N\|):x\in X\}$.
So $(\alpha_n)_n\in \{(\langle x,y_n\rangle):x\in X\}$.

Finally, since $l_1\simeq X/N$, $l_2\simeq Y$ and $Y^*=X/N$ we have
$l_2^*\simeq l_1$ and by reflexivity $l_2\simeq l_1^*$.
Moreover,
by the idenfication maps we can write for $(\alpha_n)_n\in l_1$
and $(\mu_n)_n\in l_2\simeq l_1^*$:
\beqa
 \langle (\alpha_n)_n,(\mu_n)_n\rangle_{l_1,l_2}
 =\langle \sum_n \alpha_n x_n+N,\mu_k y_n\rangle_{X/N,Y}
 =\sum_{n,k}\alpha_n\mu_k \langle x_n,y_n\rangle_{X,Y}
 =\sum_n\alpha_n\mu_n.
\eeqa

We finish by showing that (3) implies (1).
By (ii), the operator $T$ is bounded and by construction
onto, so that we are done if we can
show that $T$ is left invertible: $\|\mu\|_{l_2}\lesssim \|T\mu\|_Y$.
%(note that $(y_n)$ is by definition fundamental in $Y$).
Now by (i) for $(\alpha_n)_n\in l_1$, there exists $x_{\alpha}\in X$
such that $\alpha_n=\langle x_{\alpha},y_n\rangle$.
%For this we
Let us introduce the operator
\beqa
 A:l_1&\lra& X/N\\
 (\alpha_n)_n&\lmto&x_{\alpha}+N.
\eeqa
This operator is well defined (if we choose $x_{\alpha}'$ with
$\langle x_{\alpha}',y_n\rangle=\alpha_n$, then
$\langle x_{\alpha}'-x_{\alpha},y_n\rangle=0$ for every $n$ and
$x_{\alpha}'-x_{\alpha}\in N$). It is also linear. Let us check that
its graph is closed. For this consider a sequence $(\alpha^N_n)_n$
converging to $(\alpha_n)_n$ in $l_1$. Since the canonical basis is
an unconditional basis in $l_1$, we obtain coordinate-wise convergence:
$\alpha_n^N\to\alpha_n$ when $N\to\infty$. We assume that 
$A((\alpha_n^N)_n)=x_{\alpha^N}+N\to x+N$. Note that $A((\alpha_n)_n)=
x_{\alpha}+N$. Then for every $n$ we have $\langle x,y_n\rangle
=\lim_{N\to\infty}\langle x_{\alpha^N},y_n\rangle
=\lim_{N\to\infty} \alpha_n^N=\alpha_n=\langle x_{\alpha},y_n\rangle$. So 
$x-x_{\alpha}\in N$ and $x+N=A((\alpha_n)_n)$. By the closed graph
theorem $A$ is bounded.

Let us show that $A^*:(X/N)^*=Y\to l_1^*$ is the left inverse
to $T$ (modulo the isomorphism from $l_1^*$ to $l_2$).
Equivalently it is sufficient to show that $T^*A:l_1\to l_2^*$
is an isomorphism. Note that for $(\alpha_n)_n\in l_1$
and $(\mu_n)_n\in l_2$, we have
\beqa
 \langle T^*A(\alpha_n)_n,(\mu_n)_n\rangle_{l_2^*,l_2}
 &=&\langle A(\alpha_n)_n,T(\mu_n)_n\rangle_{X/N,Y}
 =\langle x_{\alpha}+N,\sum \mu_n y_n\rangle_{X/N,Y}\\
 &=&\sum_n \mu_n\langle x_{\alpha},y_n\rangle_{X,X^*}\\
 &=&\sum_n\mu_n\alpha_n
\eeqa
By assumption this is equal to  
$\langle (\alpha_n)_n,(\mu_n)_n\rangle_{l_1,l_2}$ so that
for every $(\alpha_n)_n\in l_1$
and $(\mu_n)_n\in l_2$, we have
\beqa
 \langle T^*A(\alpha_n)_n,(\mu_n)_n\rangle_{l_2^*,l_2}
 =\langle (\alpha_n)_n,(\mu_n)_n\rangle_{l_1,l_2}.
\eeqa
Hence $T^*A$ is the identity (modulo the identification between
$l_1$ and $l_2^*$).
\end{proof}

It is interesting to note that when $X$ is a Hilbert space more can be said
about the structure of $l$:
%The Orlicz lemma allows in this
%situation to show
it is clear that then $l=l^2$. 
%(this follows from the fact
%that then $(k_a^I)_{a\in S}$ is a Riesz sequence), 
%see \cite[Theorem C3.1.4]{Nik}. 
However, by a result of
Lindenstrauss and Zippin (see \cite{LZ}), if in a
Banach space $X$ every two normalized unconditional bases
are isomorphic to each other, then $X$ is isomorphic to
one of the following spaces $c_0$, $l^1$ or $l^2$. In
other words the general theory does not yield $l=l^p$
when $(x_n)_n$ is an unconditional basis in
(a subspace of) $X=L^p$ (Pelczynski constructed actually unconditional
bases in $l^p$ which are not equivalent to the canonical
basis, \cite{pel}).

Let $X$ be Banach space of holomorphic functions on $\DD$,
such that 
%$X^*$ can also be identified with holomorphic functions
%on $\DD$. We suppose that
the point evaluations $E_a$ in $a\in \DD$ are continuous
in $X$. % and $X^*$. 
A sequence $S\subset \DD$
is called $l$-interpolating for a sequence space $l$ (defined
on $S$) if for every sequence $v=(v_a)_{a\in S}$ with
$(v_a/\|E_a\|_{X^*})_{a\in S}\in l$ there is a function $f\in X$
with $f(a)=v_a$, i.e.\ 
\beqa
 X|S\supset l(1/\|E_a\|_{X^*}):=\{v=(v_a)_{a\in S}:
 (v_a/\|E_a\|_{X^*})_{a\in S}\in l\}.
\eeqa
Since, $\|E_a\|_{(H^p)^*}\simeq \|k_a\|_{p'}\simeq
(1-|a|^2)^{-1/p}$ ($1<p<\infty$), this definition is consistent with
the definitions we gave before for $H^p$, in which case we had
chosen $l=l^p$.

The reader should also note that in the previous subsection we 
have repeatedly used the fact that interpolation in $H^p$, i.e.\ 
%for every sequence $(v_a)_{a\in S}\in l^p(1-|a|^2)$ 
%(we will not consider the case $p=\infty$ in here)
%%or $l^{\infty}$ when $p=\infty$),
%there exists a function $f\in H^p$ with $f(a)=v_a$ for every
%$a\in S$, i.e.\ 
$H^p|S\supset l^p(1-|a|^2)$ 
(we will not consider the case $p=\infty$ here)
%(or $\Hi\supset
%l^{\infty}$ when $p=\infty$) 
implies in fact the equality 
$H^p|S= l^p(1-|a|^2)$ (this is Shapiro and Shields' result, \cite{SS}).
%(or $\Hi=l^{\infty}$ when $p=\infty$).

In the general case, without any further information, we have to 
impose an additional embedding.
%an additional assumption concerning the embedding $X|S$ into
%a suitable sequence space. 
For the convenience of the proof
in the following result we will suppose that $X$ is reflexive
(and so $l$ will be). We will also need the notion of ideal space.
A sequence space $l$ is called ideal if whenever $v=(v_n)_n\in l$
and $w=(w_n)_n$ is any numerical sequence with $|w_n|\le |v_n|$
for every $n$ then also $w\in l$. This notion appears naturally
in the context of free interpolation and unconditional bases.

\begin{proposition}\label{prop2.2}
Suppose $X\subset \Hol(\DD)$ is reflexive and $S$ is a sequence in $\DD$.
The following assertions are equivalent.
\begin{itemize}
\item [(1)] There exists a reflexive and ideal sequence space $l$ 
such that
\begin{itemize}
\item[(i)] $S$ is $l$-interpolating
\item[(ii)] There is a constant $C$ such that for every finitely supported 
sequence $\mu=(\mu)_{a\in S}$, we have
$\|\sum_{a\in S}\mu_a \frac{\D E_a}
{\D \|E_a\|_{X^*}}\|_{X^*}\le C \|\mu\|_{l^*}$,
\end{itemize}
\item [(2)]  $(E_a)_{a\in S}$ is an unconditional sequence in $X^*$.
\end{itemize}
\end{proposition}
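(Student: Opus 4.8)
\begin{proof*}
The plan is to deduce Proposition \ref{prop2.2} from the preceding abstract Proposition by specializing the minimal sequence $(y_n)_n$ there to the normalized reproducing kernels. Concretely, I would set $y_a=E_a/\|E_a\|_{X^*}$ for $a\in S$, so that $Y=\bigvee_{a\in S}E_a$ and, since $E_a$ is the point evaluation, $\langle x,y_a\rangle_{X,X^*}=x(a)/\|E_a\|_{X^*}$ for every $x\in X$. Because unconditionality of a sequence is unaffected by rescaling its terms, statement (2) --- that $(E_a)_{a\in S}$ is an unconditional sequence in $X^*$ --- is precisely statement (1) of the previous Proposition for this choice of $(y_a)_a$.

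Next I would build the dictionary between the remaining hypotheses. With the computation above, the set of generalized Fourier coefficients equals $\{(x(a)/\|E_a\|_{X^*})_a:x\in X\}$, so condition (i), namely that $S$ be $l$-interpolating ($X|S\supset l(1/\|E_a\|_{X^*})$), is exactly condition (3)(i) of the previous Proposition with $l_1=l$. Rewriting the embedding (ii) in terms of $y_a$ gives $\|\sum_a\mu_a y_a\|_{X^*}\lesssim\|\mu\|_{l^*}$, which is condition (3)(ii) with $l_2=l^*$; the compatibility $l_2\simeq l_1^*$ and the duality $\langle\alpha,\mu\rangle=\sum_a\alpha_a\mu_a$ then hold by construction. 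Thus (1) of Proposition \ref{prop2.2} is a reformulation of (3) of the previous Proposition, and the equivalence (1)$\iff$(2) follows from the equivalence (3)$\iff$(1) established there.

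Two points need care in matching the two statements. First, the previous Proposition requires the canonical systems to be \emph{unconditional bases} of $l_1$ and $l_2$, whereas here I only assume $l$ to be reflexive and ideal. I would reconcile these via the observation recorded above: the canonical system is an unconditional basis of $l$ iff $l$ is ideal and the canonical system is fundamental; and for a reflexive ideal (K\"othe) sequence space the norm is order continuous on both $l$ and its dual, which forces fundamentality. Hence ``reflexive and ideal'' already yields ``canonical system is an unconditional basis,'' and the same applies to $l^*=l_2$.

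Second, the previous Proposition is stated for a minimal sequence, so in the direction (1)$\Rightarrow$(2) I must verify minimality of $(y_a)_a$ before invoking it; I expect this to be the main technical step, though it is short. Since $l$ is ideal, its canonical vectors $e_a$ lie in $l$, and applying $l$-interpolation to $e_a$ produces $f\in X$ with $f(a)=\|E_a\|_{X^*}$ and $f(a')=0$ for $a'\neq a$, i.e.\ $\langle f,y_{a'}\rangle=\delta_{aa'}$ with $\|f\|_X$ controlled by the interpolation constant; this biorthogonal functional witnesses the minimality needed to set up the system $(x_n)_n$ in the abstract framework. In the reverse direction (2)$\Rightarrow$(1) minimality is automatic, since an unconditional sequence is minimal. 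With minimality in hand, all that remains is the translation of the conditions, and no estimate beyond those already carried out in the previous Proposition is required.
\end{proof*}
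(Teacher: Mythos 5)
Your proposal is correct, but it takes a genuinely different route from the paper's. The paper proves Proposition \ref{prop2.2} directly: it applies Nikolski's Theorem 1.1 and Lemma 1.2 from \cite{ni78} to the family $(E_a)_{a\in S}$ itself, producing an ideal space $l_0$ for which $T:(\beta_a)_a\mapsto\sum_a\beta_a E_a/\|E_a\|_{X^*}$ is an isomorphism onto $\bigvee_{a\in S}E_a$; it then sets $l:=l_0^*$ and splits the resulting two-sided estimate $c\|\mu\|_{l^*}\le \bigl\|\sum_a \mu_a E_a/\|E_a\|_{X^*}\bigr\|_{X^*}\le C\|\mu\|_{l^*}$ into its two halves: the upper bound is condition (ii), while the lower bound is left invertibility of $T$, which --- after computing $T^*f=(f(a)/\|E_a\|_{X^*})_{a\in S}$ --- is equivalent to surjectivity of $T^*$, i.e.\ to $l$-interpolation. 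You instead specialize the abstract unnumbered Proposition preceding it, with $y_a=E_a/\|E_a\|_{X^*}$, $l_1=l$, $l_2=l^*$; this is arguably the use the authors intended for that result but did not actually carry out, since their proof of Proposition \ref{prop2.2} reruns the Nikolski machinery concretely rather than quoting the abstract statement. Your route buys economy, at the price of bridging a mismatch in hypotheses, and you correctly identify both points of friction: the abstract Proposition requires the canonical systems to be \emph{unconditional bases} of $l_1$ and $l_2$, whereas Proposition \ref{prop2.2} only assumes $l$ reflexive and ideal (your order-continuity/fundamentality argument closes this, modulo the standard renorming making a solid reflexive sequence space with continuous coordinates a K\"othe lattice); and it requires minimality of $(y_a)_a$ as a standing hypothesis, which in the direction (1)$\Rightarrow$(2) you extract by interpolating the canonical vectors $e_a$ (these lie in $l$ by ideality, provided $l$ is non-degenerate at each $a$, without which interpolation at $a$ is vacuous anyway). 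One step deserves an explicit sentence: the abstract condition (3)(ii) is stated for \emph{all} $\mu\in l_2$, while (ii) of Proposition \ref{prop2.2} concerns only finitely supported $\mu$; the fundamentality of the canonical system in $l^*$ that you establish is exactly what allows the extension by continuity, so your argument already contains the fix, but it should be invoked at that point. Note finally that the paper's direct proof silently faces the same issues (its operator $T$ must be well defined on all of $l^*$ in the converse direction, which likewise needs finitely supported sequences to be dense there), so the care you take is not superfluous.
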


%We will %use the notation $S\in C(X,l)$ for
A sequence satisfying condition
(ii) will be called $l^*$-Carleson or $q$-Carleson when
$l^*=l^q$ (a Carleson embedding for $X^*$ with respect to the
sequence space $l^*$). See Subsection \ref{ss2.4} for
more on Carleson conditions. 

Note that another way of writing (ii) is
\beqa
 \forall f\in X,\forall \mu\in l^*,\quad
 |\sum_{a\in S}\mu_a \frac{f(a)}{\|E_a\|_{X^*}}|\le  C
 \|f\|_X\|\mu\|_{l^*},
\eeqa
which means that for every $f\in X$, the sequence 
$(f(a)/\|E_a\|_{X^*})_{a\in S}$ is in  $(l^*)^*=l$, and hence
(ii) is equivalent to 
\bea\label{lcarl}
 \|(f(a)/\|E_a\|_{X^*})_{a\in S}\|_l\le C \|f\|_X,
\eea
which means $X|\Lambda\subset l(1/\|E_a\|_{X^*})$ (there will be
more discussions on Carleson measures in Subsection \ref{ss2.4}).
We thus have

\begin{corollary}\label{cor2.3new}
Suppose $X\subset \Hol(\DD)$ is reflexive and $S$ is a sequence in $\DD$.
The following assertions are equivalent.
\begin{itemize}
\item [(1)] There exists a reflexive and ideal sequence space $l$ 
such that $X|\Lambda=l(1/\|E_a\|_{X^*})$
%\begin{itemize}
%\item[(i)] $S$ is $l$-interpolating
%\item[(ii)] There is a constant $C$ such that for every finitely supported 
%sequence $\mu=(\mu)_{a\in S}$, we have
%$\|\sum_{a\in S}\mu_a \frac{\D E_a}
%{\D \|E_a\|_{X^*}}\|_{X^*}\le C \|\mu\|_{l^*}$,
%\end{itemize}
\item [(2)]  $(E_a)_{a\in S}$ is an unconditional sequence in $X^*$
(an $l^*$-basis in its span).
\end{itemize}
\end{corollary}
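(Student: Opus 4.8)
The plan is to derive Corollary \ref{cor2.3new} directly from Proposition \ref{prop2.2}, which already establishes the equivalence between unconditionality of $(E_a)_{a\in S}$ in $X^*$ and the conjunction of $l$-interpolation with the $l^*$-Carleson embedding. The key observation is that assertion (1) of the corollary, namely the equality $X|\Lambda=l(1/\|E_a\|_{X^*})$, is precisely the conjunction of the two conditions (i) and (ii) appearing in Proposition \ref{prop2.2}: the inclusion $X|\Lambda\supset l(1/\|E_a\|_{X^*})$ is exactly $l$-interpolation (condition (i)), while the reverse inclusion $X|\Lambda\subset l(1/\|E_a\|_{X^*})$ is, by the reformulation \eqref{lcarl} established in the excerpt, exactly the $l^*$-Carleson embedding condition (ii).

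First I would make explicit the identification of the two inclusions. For the forward direction, I would assume $(E_a)_{a\in S}$ is an unconditional sequence in $X^*$; by Proposition \ref{prop2.2} there is a reflexive ideal sequence space $l$ satisfying both (i) and (ii). Condition (i) gives the inclusion $X|\Lambda\supset l(1/\|E_a\|_{X^*})$. For the reverse inclusion I would invoke the chain of equivalences worked out just before the corollary: condition (ii) is equivalent, via the duality pairing against arbitrary $\mu\in l^*$, to the statement that $(f(a)/\|E_a\|_{X^*})_{a\in S}\in l=(l^*)^*$ for every $f\in X$, together with the norm bound \eqref{lcarl}, $\|(f(a)/\|E_a\|_{X^*})_{a\in S}\|_l\le C\|f\|_X$. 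This is exactly $X|\Lambda\subset l(1/\|E_a\|_{X^*})$. Combining the two inclusions yields the equality in assertion (1) of the corollary.

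For the converse direction I would simply reverse this reasoning: given a reflexive ideal space $l$ with $X|\Lambda=l(1/\|E_a\|_{X^*})$, the inclusion ``$\supset$'' is $l$-interpolation, hence (i), and the inclusion ``$\subset$'' is precisely \eqref{lcarl}, which by the equivalence recorded in the excerpt is condition (ii). Thus both hypotheses of Proposition \ref{prop2.2}(1) hold, and that proposition yields unconditionality, i.e.\ assertion (2). The parenthetical remark that $(E_a)_{a\in S}$ is an $l^*$-basis in its span is not an extra claim to prove but rather a restatement: since unconditionality is equivalent to being an $l_2$-basis for the appropriate sequence space (as used in the proof of the preceding proposition via \cite{ni78}), the dual sequence space governing the span is $l^*$, consistently with the embedding direction.

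I do not expect any genuine obstacle here, since the corollary is essentially a bookkeeping repackaging of Proposition \ref{prop2.2} once one recognizes that the single equality $X|\Lambda=l(1/\|E_a\|_{X^*})$ encodes both the interpolation inclusion and the Carleson embedding inclusion. The only point requiring mild care is to verify that the reformulation \eqref{lcarl} genuinely captures condition (ii) under the reflexivity and ideality hypotheses; this is where reflexivity of $l$ (so that $l=(l^*)^*$) and the duality $\langle(\alpha_n),(\mu_n)\rangle=\sum\alpha_n\mu_n$ are used, but this identification has already been carried out in the discussion immediately preceding the corollary. Hence the proof reduces to citing that discussion and Proposition \ref{prop2.2}.
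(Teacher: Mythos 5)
Your proposal is correct and follows exactly the paper's own route: the paper states this corollary as an immediate consequence of Proposition \ref{prop2.2} after observing that condition (ii) is equivalent to the embedding \eqref{lcarl}, i.e.\ to $X|\Lambda\subset l(1/\|E_a\|_{X^*})$, so that the single equality in assertion (1) encodes precisely conditions (i) and (ii). Your identification of the two inclusions with interpolation and the Carleson embedding, and the citation of \eqref{lcarl} for the latter, is the same bookkeeping the authors intend.
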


\begin{proof}[Proof of Proposition \ref{prop2.2}]
By \cite[Theorem 1.1]{ni78} the sequence $(E_a)_{a\in S}$ 
is an unconditional sequence in its span if and only if
the multiplier space $\mult(E_a):=\{\mu=(\mu_a)_{a\in S}:
T_{\mu}:\Lin (E_a)\lra \Lin(E_a)$, $\sum_{finite}\alpha_aE_a
\lmto \sum_{finite}\mu_a\alpha_a E_a$ extends to a bounded
operator on $X_0^*:=\bigvee_{a\in S} E_a\}$ is equal to $l^{\infty}$.
And this, by \cite[Lemma 1.2]{ni78} is equivalent to the
existence of an ideal space $l_0$ such that $(E_a)_{a\in S}$
is a $l_0$-basis, which means that %$\bigvee_{a\in S}E_a
$X_0\simeq l_0(E_a):=\{(\alpha_a)_{a\in S}:
(\alpha_a \|E_a\|_{X^*})\in l_0\}$, in other words
the mapping $(\alpha_a)_{a\in S}\lmto \sum_{a\in S}\alpha_a E_a$
is an isomorphism from $l_0(E_a)$ onto $X_0$, or
equivalently 
\beqa
 T:l_0&\lra&X^*\\
 (\beta_a)_{a\in S}&\lmto& \sum_{a\in S}\beta_a 
 \frac{E_a}{\|E_a\|_{X^*}}
\eeqa
is an isomorphism. Note that $X_0$
is reflexive as a closed subspace of a reflexive Banach space,
and so is $l_0$.
Set $l:=l_0^*$ (so that $l^*=l_0$). 
By the preceding argument, $(E_a)_{a\in S}$ is
an unconditional sequence in its span if and only if
\bea\label{equinorm}
 c \|\mu\|_{l^*}\le \|\sum_{a\in S}\mu_a \frac{E_a}{\|E_a\|_{X^*}}
  \|_{X^*}\le C \|\mu\|_{l^*},
\eea
for some fixed constants $c$, $C$.
This yields
in particular (ii). 

%We introduce another operator:
%\beqa
% T:l^*&\lra& X^*\\
%  \mu&\lmto&\sum \mu_a \frac{E_a}{\|E_a\|_{X^*}},
%\eeqa
%initially defined on finitely supported sequences
%(and more precisely with values in $X_0^*$).
%Observe that either of the conditions (ii) of the theorem or \eqref{equinorm}
%show that $R$ is bounded on $l^*$. %(with values in $X_0^*\subset X^*$).
We will compute the adjoint operator
$T^*:X\lra l$. % which is consequently also bounded:
Let $\mu\in l^*$,
\beqa
 \langle T^*f,\mu \rangle=
 \langle f,T\mu\rangle=\langle f,\sum_{a\in S}\mu_a
 \frac{E_a}{\|E_a\|_{X^*}}\rangle
 =\sum_{a\in S}\mu_a \frac{f(a)}{\|E_a\|_{X^*}}.
\eeqa
Hence,
%From this we can deduce that 
the functional $T^*f$ on $l^*$
%defines a bounded linear functional $\Phi_{R^*f}:l^*\lra \C$,
%$\mu\to \sum_{a\in S}\mu_a \frac{f(a)}{\|E_a\|_{X^*}}$,
%so $\Phi_{R^*f}\in l$.
is represented by a
sequence the entries of which are given by $f(a)/\|E_a\|_{X^*}$, $a\in S$.
%(take $\mu=(\delta_{ab})_{b\in S}$). 
In other words
$T^*f=(f(a)/\|E_a\|_{X^*})_{a\in S}\in (l^*)^*=l$.

%In order to conclude it now suffices to observe that 
Now the left hand inequality in \eqref{equinorm} is equivalent 
to the left invertibility of $T$ which is equivalent to the
surjectivity of $T^*$ i.e.\ to the fact  $S$ is
$l$-interpolating.
This show that (2) implies (1).

For the converse implication, note that (ii) implies
the right inequality in \eqref{equinorm}. 
Moreover this inequality shows also that
$T$ is well defined and bounded. 
By the above arguments the surjectivity of $T^*$ is
equivalent to the fact that $S$ is interpolating. On the other hand the
surjectivity of $T^*$ is equivalent to the left invertibility of $T$
and so to the left inequality in \eqref{equinorm}.
\end{proof}

%In the preceding proof, the operator $T^*$ is a kind of weighted
%restriction operator on $X$. Consider its unweighted companion
%$R:X\lra l$, $f\lmto (f(a))_{a\in S}$ which without further
%assumption is not bounded. However adding this hypothesis 
%we obtain the following consequence.

Still the following is true

\begin{corollary}\label{cor2.3}
If $S$ is interpolating for $K^p_I$ and if
there is a constant $C$ such that for every finitely supported 
sequence $\mu=(\mu)_{a\in S}$, we have
$\|\sum_{a\in S}\mu_a k^I_a/\|k_a^I\|_{p'}\|_{p'}\le C \|\mu\|_{l^{p'}}$,
then $(k^I_a/\|k^I_a\|_{p'})_{a\in S}$ 
is an unconditional sequence in $K^{p'}_I$.
\end{corollary}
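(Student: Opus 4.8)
The plan is to obtain this as a direct specialization of Proposition~\ref{prop2.2}, applied with $X=K^p_I$ and with the sequence space $l=l^p$. First I would record the structural facts that make the abstract proposition applicable. For $1<p<\infty$ the Riesz projection is bounded on $L^p(\T)$, so $P_I=IP_-\bar I$ is a bounded projection, $K^p_I=P_IH^p$ is reflexive (being complemented in the reflexive space $H^p$), and it is a space of holomorphic functions on $\DD$ with continuous point evaluations $E_a$. With respect to the pairing $\langle f,g\rangle=\int_\T f\bar g$ the projection $P_I$ is self-adjoint, whence $(K^p_I)^*\cong K^{p'}_I$ as Banach spaces, and under this identification the functional $E_a$ is represented by the reproducing kernel $k_a^I$. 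In particular $\|E_a\|_{(K^p_I)^*}\simeq\|k_a^I\|_{p'}$ with constants uniform in $a$ (depending only on the norms of $P_I$ on $L^p$ and $L^{p'}$), so the two normalizations $E_a/\|E_a\|_{(K^p_I)^*}$ and $k_a^I/\|k_a^I\|_{p'}$ differ only by scalar factors bounded above and below.

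Next I would verify that the two hypotheses of the corollary are precisely conditions (i) and (ii) of Proposition~\ref{prop2.2}(1) for $l=l^p$, which is reflexive and ideal with $l^*=l^{p'}$. Condition (i) asks that $S$ be $l^p$-interpolating for $K^p_I$, i.e.\ $K^p_I|S\supset l^p(1/\|E_a\|_{(K^p_I)^*})$; since $\|E_a\|_{(K^p_I)^*}\simeq\|k_a^I\|_{p'}$, the weighted space is unchanged if $\|E_a\|_{(K^p_I)^*}$ is replaced by $\|k_a^I\|_{p'}$, so this is exactly the assumption that $S$ is interpolating for $K^p_I$. Condition (ii) is the $l^{p'}$-Carleson inequality $\|\sum_{a\in S}\mu_a E_a/\|E_a\|_{(K^p_I)^*}\|_{(K^p_I)^*}\lesssim\|\mu\|_{l^{p'}}$; using $(K^p_I)^*\cong K^{p'}_I$ and again replacing $\|E_a\|_{(K^p_I)^*}$ by the comparable $\|k_a^I\|_{p'}$ (the uniformly bounded ratios can be absorbed into the constant because $l^{p'}$ is ideal), this is exactly the Carleson hypothesis $\|\sum_{a\in S}\mu_a k_a^I/\|k_a^I\|_{p'}\|_{p'}\le C\|\mu\|_{l^{p'}}$ of the corollary.

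With both hypotheses matched, Proposition~\ref{prop2.2} yields that $(E_a)_{a\in S}$ is an unconditional sequence in $(K^p_I)^*$. Since unconditionality is preserved both by the Banach-space isomorphism $(K^p_I)^*\cong K^{p'}_I$ and by rescaling each vector by a scalar bounded away from $0$ and $\infty$, it follows that $(k_a^I/\|k_a^I\|_{p'})_{a\in S}$ is an unconditional sequence in $K^{p'}_I$. The only point requiring genuine care --- and the step I expect to be the main obstacle --- is the duality bookkeeping of the first paragraph: one must confirm that $(K^p_I)^*\cong K^{p'}_I$ is a Banach-space isomorphism carrying $E_a$ to $k_a^I$ with $\|E_a\|_{(K^p_I)^*}\simeq\|k_a^I\|_{p'}$ uniformly in $a$, so that passing between the two normalizations distorts the $l^{p'}$-norms and the weighted interpolation space only by uniformly bounded factors. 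Once this is in place no further estimate is needed, and the corollary is a clean instance of Proposition~\ref{prop2.2} with $l=l^p$.
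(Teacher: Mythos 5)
Your proposal is correct and is exactly the argument the paper intends: Corollary \ref{cor2.3} is stated immediately after Proposition \ref{prop2.2} precisely as its specialization to $X=K^p_I$, $l=l^p$ (using the identification $(K^p_I)^*\cong K^{p'}_I$, $E_a\leftrightarrow k_a^I$, which the paper itself records in Subsection \ref{ss2.4}). Your careful handling of the duality bookkeeping and the uniformly bounded normalization ratios $\|E_a\|_{(K^p_I)^*}/\|k_a^I\|_{p'}$ fills in the only details the paper leaves implicit.
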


More precisely the conclusion would be that $(k^I_a)_{a\in S}$
is an $l^{p'}$-basis in its span.
This conclusion can in general not be deduced only from the
condition of unconditionality as explained above. However,
in the special situation $\sup_{a\in S}|I(a)|<1$, 
\cite[Theorem 6.3, Partie II]{HNP} shows that if the reproducing
kernels form an unconditional sequence in $K^{p'}_I$
then automatically they form an $l^{p'}$-basis in their span.

\subsection{Carleson measures}\label{ss2.4}

%[Cette soussection n'est pas terminee]

Let us fix the framework of this subsection.
$S$ is a sequence in $\DD$, $I$ an inner function and
$1\le q <\infty$. For $a\in S$ we denote by
$k_{q,a}^I=k_a^I/\|k_a^I\|_q$
the normalized reproducing kernel.

Let $1\le q<\infty$. Recall that a sequence $S$ is called $q$-Carleson
if 
\beqa
 \exists D_q>0, \forall \mu\in l^q,\ \left\|\sum_{a\in S}
 \mu_a k^I_{q,a}\right\|_q\le D_q \|\mu\|_q.
\eeqa

We will also use the notion of weak $q$-Carleson sequences:

\begin{definition}
Let $2\le q<\infty$. The sequence $S$ is called
weakly $q$-Carleson if
\beqa
 \exists D_q>0, \forall \mu\in l^q,\ \left\|\sum_{a\in S}
 |\mu_a|^2 |k^I_{q,a}|^2 \right\|_{q/2}\le D_q \|\mu\|_q^2.
\eeqa
\end{definition}

Note that by \cite[Lemma 3.2]{Amar}, the $q$-Carleson property
implies the weak $q$-Carleson property.

Observe also that $(l^q)^*=l^p$, that the dual of $K^q_I$ can 
be identified with $K^p_I$, and that the functional of point
evaluation $E_a$ can then be identified with $k^I_a$.
%U
Now, using the notation from the preceding subsection,
by \eqref{lcarl}, $S$ is $q$-Carleson if and only if
for every $f\in K^p_I$, 
\beqa
%the operator $T$ is bounded, where now $l_0=l^q$ and $X=K^q_I$.
%This is equivalent to $T^*$ being bounded. Introducing the measure
%$\nu=\sum_{a\in S}\delta_a/\|k_a^I\|_p$ we obtain
%for every $f\in K^p_I$,
%\beqa
% \int_{\overline{\DD}}|f|^p\, d\nu=
 \sum_{a\in S}\frac{|f(a)|^p}{\|k_a^I\|_p^p}
%=\|T^*f\|_{l^p}^p
 \le c \|f\|_p,
\eeqa
which means that $\nu:=\sum_{a\in S}\delta_a/\|k_a^I\|_p$ 
is a $K^p_I$-Carleson measure:
$K^p_I\subset L^p(\nu)$. 
%Conversly, if $\nu$ is $K^p_I$-Carleson,
%then $T^*$ %--- initially defined on 
%defines a bounded operator, so that $T$ will be bounded and
%hence $S$ is $q$-Carleson.

In the special situation when $I$ is one-component, then
by a result by Aleksandrov (see \eqref{Aleks}), we have
\beqa
 \|k_a^I\|_p\simeq \left(\frac{1-|I(a)|^2}{1-|a|^2}\right)^{1/p'},
\eeqa
$p'$ being the conjugated index to $p$, and so, if $S$ is
$q$-Carleson and $I$ is one-component, then the measure
\beqa
 d\nu =\sum_{a\in S}\frac{1-|a|^2}{1-|I(a)|^2}\delta_a
\eeqa
is $K^p_I$-Carleson.

\noindent {\bf Geometric Carleson conditions}

In \cite{TV}, the following geometric notion of Carleson measure
appears. For an inner function $I$ and an $\eps>0$, 
let $L(I,\eps)=\{z\in \DD:|I(z)|<\eps\}$ be the associated
level set. In the notation of \cite{Al1},
let $\mathcal{C}(I)$ be the set of measures
for which there exists $C>0$ such that
\bea\label{geocarl}
 |\mu|(S(\zeta,r))\le C r
\eea
for every Carleson window $S(\zeta=e^{it},h):=\{z=r e^{i\theta}\in \DD:
1-h<r<1, |t-\theta|<h\}$ meeting $L(I,1/2)$ (this is of course
a weaker notion than the usual one requiring \eqref{geocarl}
on all Carleson windows; the value $\eps=1/2$ is of no
particular relevance). Let also $\mathcal{C}_p(I)$ be the
set of measures for which $K^p_I\subset L^p(\mu)$. Strengthening
the results of \cite{TV}, Aleksandrov proved in \cite[Theorem 1.4]{Al1}
that for one component inner functions $\mathcal{C}(I)=
\mathcal{C}_p(I)$. In other words, the geometric Carleson
condition \eqref{geocarl} on Carleson windows meeting the
level set $L(I,1/2)$ characterizes the $K^p_I$-Carleson
measures for one component inner functions.

Combining these observations, 
we get 
%that for one-component inner functions $I$, 
the following characterization. % are equivalent
\begin{fact}\label{fact1}
Let $I$ be a one-component inner function. Then
the following assertions are equivalent.
\begin{itemize}
\item[(i)] $S$ is $p'$-Carleson
\item[(ii)] $\nu=\sum_{a\in S}\frac{\D 1-|a|^2}{\D 1-|I(a)|^2}\delta_a$
is $K^p_I$-Carleson
\item[(iii)] $\nu$ (as defined in point (ii)) satisfies the geometric
Carleson condition \eqref{geocarl} on Carleson windows meeting
the level set $L(I,1/2)$.
\end{itemize}
\end{fact}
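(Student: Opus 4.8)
The plan is to read this Fact purely as an assembly of the two results of Aleksandrov already recalled in this subsection, the equivalence splitting naturally into (i)$\Leftrightarrow$(ii) and (ii)$\Leftrightarrow$(iii). The first of these is the translation of the abstract $p'$-Carleson condition into a concrete Carleson embedding through the reproducing-kernel norm asymptotics; the second is a verbatim citation of the geometric characterization of $K^p_I$-Carleson measures for one-component inner functions.

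First I would establish (i)$\Leftrightarrow$(ii). Setting $q=p'$ so that the conjugate index is $p$, recall that the dual of $K^q_I$ is $K^p_I$ with $E_a$ identified with $k_a^I$. By the reformulation \eqref{lcarl} derived in this subsection, the statement that $S$ is $p'$-Carleson is equivalent to the embedding $K^p_I\subset L^p(\nu_0)$, where $\nu_0=\sum_{a\in S}\|k_a^I\|_p^{-p'}\delta_a$; that is, $\nu_0\in\mathcal{C}_p(I)$. At this point I would invoke the Aleksandrov norm estimate \eqref{Aleks}, valid precisely because $I$ is one-component, in the form $\|k_a^I\|_p\simeq\left((1-|I(a)|^2)/(1-|a|^2)\right)^{1/p'}$. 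Raising to the power $-p'$ gives $\|k_a^I\|_p^{-p'}\simeq(1-|a|^2)/(1-|I(a)|^2)$ with constants independent of $a$, so $\nu_0$ and the measure $\nu$ of (ii) coincide up to a fixed multiplicative constant. Since membership in $\mathcal{C}_p(I)$ is invariant under such scaling, $\nu_0\in\mathcal{C}_p(I)$ if and only if $\nu\in\mathcal{C}_p(I)$, which is exactly condition (ii).

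Next I would dispatch (ii)$\Leftrightarrow$(iii) by citation. Condition (ii) says $\nu\in\mathcal{C}_p(I)$ and condition (iii) says $\nu\in\mathcal{C}(I)$ in the notation above. Aleksandrov's theorem \cite[Theorem 1.4]{Al1}, which requires $I$ to be one-component, asserts exactly that $\mathcal{C}(I)=\mathcal{C}_p(I)$, so the two memberships are equivalent and the chain closes. I would also remark, as the text already notes, that the choice $\eps=1/2$ in the level set $L(I,1/2)$ plays no distinguished role, so no extra work is needed to match the window hypothesis of (iii) with the cited statement.

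Since both deep analytic ingredients --- the kernel-norm asymptotics and the geometric characterization --- are imported, the proof contains no genuinely hard step. The only delicate point, and the place where I expect an error to creep in, is the bookkeeping of conjugate exponents together with the exact power of $\|k_a^I\|_p$ appearing in $\nu_0$: one must verify that the power produced by \eqref{lcarl} is exactly $-p'$, so that it cancels against the exponent $1/p'$ in \eqref{Aleks} to leave the weight $(1-|a|^2)/(1-|I(a)|^2)$ with power one, and that it is a \emph{single} measure $\nu$ that is fed into both of Aleksandrov's results.
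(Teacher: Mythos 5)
Your proposal is correct and is essentially the paper's own proof: the Fact is given there with no argument beyond ``combining these observations'', namely the duality reformulation \eqref{lcarl} of the $p'$-Carleson condition, Aleksandrov's kernel-norm estimate \eqref{Aleks}, and Aleksandrov's theorem that $\mathcal{C}(I)=\mathcal{C}_p(I)$ for one-component inner functions --- exactly the three ingredients you assemble, split into the same two equivalences. One pedantic remark: strict duality from \eqref{lcarl} produces the weight $\|k_a^I\|_{p'}^{-p}$ rather than your $\|k_a^I\|_p^{-p'}$, but since both are comparable to $(1-|a|^2)/(1-|I(a)|^2)$ by \eqref{Aleks} under the one-component hypothesis, your chain (and the paper's own text, which contains a similar exponent slip) goes through unchanged.
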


%Recall from Corollary \ref{cor2.3} 
%that if $S$ is interpolating for $K^p_I$ then $S$
%is $p'$-Carleson.

\begin{question*}
Do there exist in backward shift invariant subspaces
interpolating sequences $S$ that are not $p'$-Carleson?
\end{question*}

\section{Paley-Wiener spaces}\label{ss2.3}

We will discuss a special class of backward shift invariant
subspaces. Let $I(z)=e^{i2\pi z}$ be the singular inner function
in the upper half plane with sole singularity at $\infty$
(to fix the ideas, we have chosen the mass of the associated singular
measure to be $2\pi$).
Recall (see \cite[B.1]{Nik}) that the transformation
\beqa
 U_p:H^p(\DD)&\lra&H^p(\C^+)\\
  f&\lmto& \left \{x\to (U_pf)(x)=\left(\frac{1}{\pi(x+i)^2}\right)^{1/p}
 f\left(\frac{x-i}{x+i}\right)\right\}
\eeqa
is an isomorphism of the Hardy space on the disk $H^p(\DD)$ onto
the Hardy space $H^p(\C^+)$ of the upper half plane $\C^+
=\{z\in \C:\Im z>0\}$.
This transformation sends the inner function $I_0(z)
=\exp(2\pi(z+1)/(z-1))$ on $\DD$ to $I$ on $\C^+$.

Let $PW^p_{\pi}$ be the Paley-Wiener space of entire functions
of type $\pi$ which are $p$-th power integrable on the real line.
%The following estimate
%can  be deduced from a  theorem by Plancherel and P\'olya
%(see \cite[Lecture 7, Theorem 4]{lev}):
%%that $f$ is $p$-th
%%power integrable on every horizontal line 
%\beqa
% \int_{\R}|f(x+ia)|^pdx\le e^{p\pi|a|}\|f\|_p^p
%\eeqa
%for every $a\in \R$ and for every $f\in PW^p_{\pi}$.
%In particular, since $f$ is of type $\pi$, we have
%$
Pick $f\in PW^p_{\pi}$.
By a  theorem by Plancherel and P\'olya
(see \cite[Lecture 7, Theorem 4]{lev}) we get
%that $f$ is $p$-th
%power integrable on every horizontal line 
\bea\label{PP}
 \int_{\R}|f(x+ia)|^pdx\le e^{p\pi|a|}\|f\|_p^p
\eea
for every $a\in \R$. Setting $F(z)=e^{i\pi z}f(z)$ (which means
that in a sense we compensate the type in the positive imaginary
direction) yields
\beqa
 \int_{\R}|F(z+iy)|^pdx=\int_{\R}|f(x+iy)|^pe^{-p\pi y}dx
 \le \|f\|_p^p
\eeqa
in particular for every $y>0$ which means that $F\in H^p(\C^+)$.
Dividing $F$ by $I$ we obtain an analytic function
in the lower halfplane $\C_-$ and for every $y<0$,
\beqa
 \int_{\R}|F(x+iy)e^{-i2\pi (x+iy)}|^pdx=\int_{\R}|f(x+iy)|^pe^{p\pi y}dx
 \le \|f\|_p^p
\eeqa
so that $F/I$ is in the Hardy space of the lower halfplane 
$H^p(\C_-)$. Hence $F\in H^p(\C^+)\cap \overline{I}H^p_0(\C_-)=:
K^p_{\R,I}$ (now
considered as a space of functions on $\R$, the elements of which can of
course be continued analytically to the whole plane).
It is clear that $K^p_{\R,I}$ can be identified via $U_p$ with
$K^p_I$ on $\DD$ (or $\T$). Hence there is a natural identification
between Paley-Wiener spaces and backward invariant subspaces
(on $\T$ or $\R$): $PW^p_{\pi}=e^{-i\pi z}U_pK^p_I$.

It is well known that in the particular case $p=2$, $PW^p_{\pi}$  
is nothing but $\mathcal{F}L^2(-\pi,\pi)$ (this comes from the Paley-Wiener
theorem).

Let us make another observation concerning 
%translations, more precisely 
imaginary translations. For $a\in \R$, let 
\beqa
 \Phi_a:PW^p_{\pi}&\lra& PW^p_{\pi}\\
   f&\lmto& \{\Phi_a f:z\lmto f(z-ia)\}.
\eeqa
Using again the Plancherel-P\'olya theorem (see \eqref{PP}), we see
that $\Phi_a$ is well-defined and bounded (it is clearly linear).
It is also invertible with inverse $\Phi_a^{-1}=\Phi_{-a}$. So
$\Phi_a$ is an isomorphism of $PW^p_{\pi}$ onto itself (the
type that we fixed to $\pi$ here does not really matter).

So the Paley-Wiener spaces are special candidates of our
spaces $K^p_I$, which motivates the following important observations.
%Another important %observation 
%comment comes from \cite{SchS2}. 
In general
it is not true that uniform minimality implies 
interpolation or unconditionality
which we will explain now following
%as %can be seen from the example constructed
%explained in 
\cite{SchS2}. %We will discuss 

By definition a sequence $\Gamma=\{x_k+iy_k\}_k$ is interpolating
for $PW^p_{\tau}$ if for every numerical sequence $(v_k)_k$ with
\bea\label{eq3.2}
 \sum_k |v_k|^p e^{-p\tau|\eta_k|} (1+|\eta_k|)<\infty
\eea
there exists $f\in PW^p_{\tau}$ with $f(\gamma_k)=a_k$.

\begin{theorem}[Schuster-Seip, 2000]\label{thm3.1}
Let $2\le p <\infty$. 
There exists a dual bounded sequence $\Gamma$ which is not
interpolating in $PW^{p}_{\pi}$. 
\end{theorem}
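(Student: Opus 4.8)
The plan is to produce $\Gamma=\{\gamma_k\}\subset\R$ explicitly and verify the two required properties separately, after translating each into a norm statement about the Lagrange functions of $\Gamma$. Let $G$ be the entire function of exponential type $\pi$ vanishing exactly on $\Gamma$, and put
\beqa
 L_k(z)=\frac{G(z)}{G'(\gamma_k)(z-\gamma_k)},
\eeqa
so that $L_k(\gamma_j)=\delta_{jk}$. These functions are simultaneously the natural candidates for the biorthogonal (dual) system witnessing uniform minimality and the synthesis functions of the interpolation problem, so the whole argument rests on controlling their $PW^p_\pi$-norms, individually on the one hand and collectively on the other.

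First I would reformulate dual boundedness. By Definition \ref{defdb} and the Hahn--Banach description following it, $\Gamma$ is dual bounded in $PW^p_\pi$ iff there are $\rho_k\in PW^p_\pi$ with $\rho_k(\gamma_j)=\delta_{jk}\|E_{\gamma_k}\|_{(PW^p_\pi)^*}$ and $\sup_k\|\rho_k\|_p<\infty$. Since real translations $z\mapsto z-\gamma_k$ are isometries of $PW^p_\pi$, the number $\|E_{\gamma_k}\|_{(PW^p_\pi)^*}$ is a constant independent of $k$; taking $\rho_k=\|E_{\gamma_k}\|_{(PW^p_\pi)^*}L_k$ therefore reduces dual boundedness \emph{exactly} to the one-point bound $\sup_k\|L_k\|_{PW^p_\pi}<\infty$. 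For a separated real sequence whose generating function stays comparable to $\sin\pi z$, each $L_k$ is essentially a translate of the kernel $\sinc(\cdot-\gamma_k)$ and the bound follows; this is where the restriction $p\ge2$ is natural, being exactly the range in which the weakly $q$-Carleson notion of Subsection \ref{ss2.4} is defined and, by \cite[Lemma 3.2]{Amar}, is implied by the ordinary Carleson property of the (separated) sequence.

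Next I would reformulate interpolation. As $\gamma_k\in\R$, the weight in \eqref{eq3.2} is trivial, so $\Gamma$ is interpolating for $PW^p_\pi$ iff the synthesis map $(v_k)\mapsto\sum_k v_kL_k$ sends $\ell^p$ boundedly \emph{onto} $PW^p_\pi$; by Corollary \ref{cor2.3new} this is the unconditional-basis property, which controls all $L_k$ simultaneously and is genuinely stronger than the per-point bound above. To separate the two I would take $\Gamma$ separated but with Beurling upper density equal to the critical value $1$, and tune the growth of $G$ so that the associated weight $|G|^p$ sits at the boundary of the Muckenhoupt-type condition that governs complete interpolation (cf.\ \cite{LS}): individual $L_k$ remain uniformly bounded, while the synthesis operator loses its lower bound. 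The failure of interpolation, i.e.\ the existence of $\ell^p$ data that cannot be matched, would then be read off from this boundary behaviour rather than from any single finite test.

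The main obstacle is precisely the failure of interpolation, because the obvious finite tests are inconclusive: for exact unit spacing the $\sinc(\cdot-k)$ are orthonormal, so a signed block $\sum_{k}\pm\sinc(\cdot-k)$ has $L^2$-norm equal to the $\ell^2$-norm of the data, and for $p\ge2$ a Khintchine/Plancherel--P\'olya estimate keeps the ratio $\|\sum v_kL_k\|_p/\|(v_k)\|_{\ell^p}$ bounded on a single run. Hence the divergence must come from a \emph{global}, weight-sensitive obstruction: one has to tune the canonical product $G$ and the derivatives $G'(\gamma_k)$ sharply enough that the per-point norms stay uniformly bounded (forcing $G$ to imitate $\sin\pi z$ closely) while the $A_p$-type weight condition fails at criticality, and one has to exhibit the non-interpolable $\ell^p$ datum by a duality argument against this failure. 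The density characterizations recalled in Section \ref{ss2.3} guide the choice of geometry, but, as stressed there, they do not themselves furnish the $\ell^p$ estimates for $p\ne2$, so this last step --- the heart of the construction --- must be carried out directly and is where the hypothesis $p\ge2$ is genuinely used.
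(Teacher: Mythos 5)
Your high-level strategy is in fact the one the paper follows (Lagrange-type dual functions for dual boundedness; the Lyubarskii--Seip Muckenhoupt criterion for failure of interpolation), but the proposal stops short of executing precisely the steps that constitute the proof. You never produce the sequence: the paper takes $\gamma_0=0$, $\gamma_k=k+\sign(k)/(2p_0)$ with $p_0=\max(p,p')$, for which the generating function satisfies $|G(x)|\simeq d(x,\Gamma)(1+|x|)^{-1/p_0}$; for $p\ge 2$ one has $p_0=p$, so the weight $F^p$ with $F(x)=|G(x)|/d(x,\Gamma)\simeq(1+|x|)^{-1/p}$ is exactly the critical weight $(1+|t|)^{-1}$, and the $(A_p)$ expression on $I=[0,x]$ grows like $\log(1+|x|)$ --- this explicit computation, which you defer as ``the heart of the construction,'' is the proof of non-interpolation. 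This is also where $p\ge 2$ genuinely enters ($p_0=p$ makes the weight critical); your attribution of the restriction to the weakly $q$-Carleson notion of Subsection \ref{ss2.4} is not relevant to this statement. Moreover, you miss an intermediate reduction the paper needs: the criterion of \cite{LS} characterizes \emph{complete} interpolating sequences, so to deduce non-interpolation one must first know that interpolating would imply completely interpolating; the paper gets this by observing $G\notin L^p(\R)$ (so $\Gamma$ is a uniqueness set). Relatedly, your claim that interpolation is equivalent, via Corollary \ref{cor2.3new}, to the synthesis map being bounded onto is not correct as stated: that corollary characterizes unconditionality and requires interpolation \emph{together with} the Carleson embedding, not interpolation alone.

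There is also a real tension in your plan that the actual example resolves differently than you suggest. You want $G$ to ``imitate $\sin\pi z$ closely'' so that each $L_k$ is essentially a translated $\sinc$ kernel; but if $|G(x)|\simeq d(x,\Gamma)$ with bounded constants, the associated weight is trivially $(A_p)$ and the sequence would be completely interpolating by \cite{LS}, killing the counterexample. In the paper's construction $G$ decays like $(1+|x|)^{-1/p}$, the Lagrange functions are \emph{not} uniformly comparable to translated $\sinc$ kernels (indeed $\|g_k\|_p\to 0$), and dual boundedness comes instead from the ratio estimate $|g_k(\gamma_k)|\simeq\|g_k\|_{L^p(\R)}$ for $g_k=G/(z-\gamma_k)$, combined with the fact that $\|k_{\gamma_k}\|_{p'}\simeq 1$ for real points. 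So the two halves of your plan, as formulated, pull against each other, and the quantitative balancing act that reconciles them --- the specific perturbation $1/(2p_0)$ and the resulting estimates --- is missing.
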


We would like to recall here the construction of Schuster and
Seip since it will serve later on.

\begin{proof}
%Let %us now discuss Schuster and Seip's example. Let 
Define a squence $\Gamma=\{\gamma_k\}_{k\in\Z}$ by
$\gamma_0=0$ and $\gamma_k(p)=k+\delta_k(p)$, $k\in \Z\setminus\{0\}$,
where $\delta_k(p)=\sign(k)/(2p_0)$ and $p_0=\max(p,p')$,
$1/p+1/p'=1$. Since this sequence is real, the weight appearing
in \eqref{eq3.2} is equal to 1.
%We should first 

%Associated with $\Gamma$ 
Now let $G(z)=z\prod_{k\neq 0}(1-\frac{\D z}{\D \gamma_k})$
which defines
an entire function of exponential type 
$\pi$ with $|G(x)|\simeq d(x,\Gamma) (1+|x|)^{-1/p_0}$.
% and $\Gamma=\{\gamma_k\}_{k\in\Z}$. 
Note that the $p$-th power integrability of $|G|$ on $\R$ is
determined by $(1+|x|)^{-1/p_0}$, and the latter function is
never $p$-th power integrable on $\R$
(one could distinguish the case $p>2$ and $p<2$).
Hence, $\Gamma$ is a uniqueness set and thus interpolating if
and only if it is completely interpolating. 
%(???????????????????????????????????)
  
We will use the same type of computations as in the proof
of \cite[Theorem 2]{LS} to check that $\Gamma$ is not
(completely) interpolating when $p\ge 2$.
According to \cite[Theorem 1]{LS}, it suffices to check that
$F^p$, where
$F(x)=|G(x)/d(x,\Gamma)|\simeq (1+|x|)^{-1/p_0}$, is not
$(A_p)$, i.e.
%\ for instance % For $p\ge 2$, we have $p_0=p'$ and hence $p/p_0<1
\beqa
 \frac{1}{|I|}\int_I F^p dt \left(\frac{1}{|I|}\int_I
 F^{-p'}dt \right)^{p-1}
\eeqa
is not uniformly bounded in the intervals $I$.
For $p\ge 2$, we have $p_0=p$ and hence we have to consider
\beqa
  \frac{1}{|I|}\int_I (1+|t|)^{-1} dt \left(\frac{1}{|I|}\int_I
 (1+|t|)^{p'/p}dt \right)^{p-1}.
\eeqa
This expression behaves like $\log (1+|x|)$ when $I=[0,x]$, which
is incompatible with the $(A_p)$-condition. So the sequence $\Gamma$
is not interpolating.

%The set is not interpolating
%(see \cite{LS}) but
On the other hand, $g_k(z)=G(z)/(z-\gamma_k)$ 
vanishes on $\Gamma\setminus\{\gamma_k\}$ and satisfies
\bea\label{eq2.3.1}
 |g_k(\gamma_k)|\simeq \|g_k\|_{L^p(\R)}. 
\eea
Note that $G\in L^p$
(if and) only if $(1+|x|)^{-1/p'}\in L^p$, i.e. $p/p'=p-1>1$ or
$p>2$. %In particular, for $p\le 2$, the
This implies that the sequence is dual bounded. In fact, note
that the reproducing kernel of the Paley-Wiener space $PW^p_{\pi}$ in $x\in\R$
is given by $k_x(z)=\sinc (\pi (z-x))=\sin(\pi(z-x))/(\pi(z-x))$,
the norm of which in $L^{p'}(\R)$
can be easily estimated to be comparable to a
constant independantly of $x$.
Hence \eqref{eq2.3.1} implies that $\tilde{g}_k:=g_k/\|g_k\|_p$ is
of uniformly bounded norm and $|\tilde{g}_k(\gamma_k)|\simeq 1
\simeq \|k_{\gamma_k}\|_{L^{p'}(\R)}$. Suitably renormed,
$(\tilde{g}_k)_k$ thus furnishes the family $(\rho_{\gamma_k})_k$
mentioned after Definition \ref{defdb}.
\end{proof}

As a consequence, in $PW^p_{\pi}$ there exists a sequence $\Gamma$ such that
$\{k_{\gamma_l}/\|k_{\gamma_l}\|_{PW^{p'}_{\pi}}\}_l$ is
uniformly minimal in $PW^{p'}_{\pi}$ but not unconditional.

Still, it can be observed that $\Gamma$ is uniformly separated
in the euclidean distance and hence by the classical 
Plancherel-P\'olya inequality we have for every 
%entire function
$f\in PW^p_{\pi}$
\bea\label{PPineq}
 \sum_{k}|f(\gamma_k)|^p\le C\|f\|^p_p,
\eea
so that the restriction operator $f\lmto f|\Gamma$ is continuous
from $PW^p_{\pi}$ to $l^p$ (onto when $\Gamma$ is interpolating),
in other words the measure $\sum_{\gamma\in \Gamma}\delta_{\gamma}$
is $PW^p_{\pi}$-Carleson.

%A closely r example gives rise to the following result.

%We will give another example based on \cite{LS} to prove
%Still more is true.
More can be said.
The following result is nothing but a re-interpretation
of \cite{LS}.

\begin{proposition}\label{prop2.4}
Let $1<p\le 2$. Then for every $1<s<p$ there exists a sequence
$\Gamma$ that is interpolating for $PW^p_{\pi}$ without
being interpolating for $PW^s_{\pi}$.
\end{proposition}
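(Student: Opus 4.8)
The plan is to build an explicit family of sequences using the characterization of complete interpolating sequences in Paley-Wiener spaces from Lyubarskii-Seip \cite{LS}, in complete analogy with the construction in the proof of Theorem \ref{thm3.1}. The key is that for real sequences the relevant weight in the interpolation condition \eqref{eq3.2} is trivial, so interpolation in $PW^r_{\pi}$ reduces, via \cite[Theorem 1]{LS}, to an $(A_r)$-condition on $F^r$, where $F(x)=|G(x)/d(x,\Gamma)|$ and $G$ is the generating function of $\Gamma$. Thus the whole problem is to produce a single function $F$ of the form $(1+|x|)^{-\beta}$ (arising from a generating function $G$ with a controlled density of real zeros) for which $F^p$ satisfies $(A_p)$ while $F^s$ fails to satisfy $(A_s)$ for the prescribed $s<p$.

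First I would recall from the Schuster-Seip computation that for a real sequence $\gamma_k=k+\delta_k$ with a fixed fractional shift $\delta_k=\sign(k)\beta$, one gets a generating function $G$ of exponential type $\pi$ with $|G(x)|\simeq d(x,\Gamma)(1+|x|)^{-\beta}$, so $F(x)\simeq (1+|x|)^{-\beta}$; here the exponent $\beta$ is controlled by the size of the shift. Then I would compute the $(A_r)$-Muckenhoupt quantity for the power weight $F^r=(1+|x|)^{-r\beta}$ on intervals $I=[0,x]$. The product
\beqa
 \frac{1}{|I|}\int_I (1+|t|)^{-r\beta}\,dt
 \left(\frac{1}{|I|}\int_I (1+|t|)^{r'\beta}\,dt\right)^{r-1}
\eeqa
is a standard power-weight computation: on such growing intervals it stays bounded precisely when the two exponents are in the admissible range, and it blows up (logarithmically, as in the borderline case of Theorem \ref{thm3.1}, or polynomially) otherwise. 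The point is that the threshold value of $\beta$ separating the $(A_r)$ and non-$(A_r)$ behaviour depends on $r$. So by choosing $\beta$ in the appropriate half-open window between the threshold for $p$ and the threshold for $s$, one makes $F^p$ satisfy $(A_p)$ (hence $\Gamma$ interpolating for $PW^p_{\pi}$) while $F^s$ violates $(A_s)$ (hence $\Gamma$ not interpolating for $PW^s_{\pi}$).

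The main obstacle, and the step requiring genuine care rather than routine calculation, is choosing the shift parameter $\beta$ (equivalently $\delta_k$) so that both conditions hold simultaneously and confirming that the resulting $\Gamma$ is indeed a \emph{complete} interpolating sequence for $PW^p_{\pi}$, so that \cite[Theorem 1]{LS} genuinely applies in both directions. As in the proof of Theorem \ref{thm3.1}, since $|G(x)|$ has the decay $(1+|x|)^{-\beta}$ the membership $G\in L^p$ forces $\Gamma$ to be a uniqueness set, making interpolation equivalent to complete interpolation; one must check that the chosen $\beta$ keeps us in the regime $1<s<p\le 2$ where the relevant $(A_r)$ thresholds are ordered correctly and where the boundedness of the Hilbert transform (i.e.\ $1<r<\infty$) underpinning \cite{LS} is available. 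Once the power-weight thresholds are computed explicitly as functions of $r$ and seen to be strictly monotone in $r$, the existence of an admissible $\beta$ for each pair $1<s<p\le 2$ follows, completing the argument.
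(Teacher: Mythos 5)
Your toolbox is the right one --- shifted integers, the Lyubarskii--Seip characterization of complete interpolating sequences via the Muckenhoupt condition, and explicit power-weight computations; this is exactly the circle of ideas behind the paper's proof --- but your construction, as specified, fails at precisely the step you flag as ``requiring genuine care''. You take $\delta_k=\sign(k)\beta$ with $\beta>0$, i.e.\ you \emph{spread out} the integers exactly as in the proof of Theorem \ref{thm3.1}, so that $F$ \emph{decays}: $F(x)\simeq(1+|x|)^{-2\beta}$ (note also the factor $2$: a shift of magnitude $\beta$ produces decay exponent $2\beta$, as in the paper where the shift $1/(2p_0)$ gives decay $(1+|x|)^{-1/p_0}$). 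For a decaying power weight, the binding part of the $(A_r)$-condition on $F^r=(1+|x|)^{-2\beta r}$ is $-1<-2\beta r$, i.e.\ $\beta<1/(2r)$, and this threshold is \emph{decreasing} in $r$. Hence proving ``interpolating for $PW^p_\pi$'' via \cite[Theorem 1]{LS} forces $\beta<1/(2p)$, while ``not interpolating for $PW^s_\pi$'' would require $\beta\ge 1/(2s)>1/(2p)$: the window you need is empty. The monotonicity you invoke is real, but for spread-out sequences it goes in the wrong direction, and no choice of $\beta>0$ can work. A compounding problem: for $\beta>1/(2s)$ one has $G\in L^s(\R)$, so $\Gamma$ is then \emph{not} a uniqueness set for $PW^s_\pi$ (your uniqueness claim is stated backwards --- it is $G\notin L^p$, not $G\in L^p$, that forces $\Gamma$ to be a uniqueness set), and failure of $(A_s)$ would then not even imply failure of interpolation, since \cite[Theorem 1]{LS} characterizes \emph{complete} interpolating sequences only.

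The missing idea, which is the whole point of the paper's proof of Proposition \ref{prop2.4}, is to perturb in the \emph{opposite} direction: \emph{narrow} the integers, $\delta_k=-\sign(k)\delta$, so that $F$ \emph{grows}, $F(x)\simeq(1+|x|)^{+2\delta}$. Then $G$ grows polynomially on $\R$, so $\Gamma$ is automatically a uniqueness set for every $PW^r_\pi$, and the binding side of the $(A_r)$-condition becomes $2\delta r<r-1$, i.e.\ $\delta<1/(2r')$ --- a threshold which is \emph{increasing} in $r$. Now the window is nonempty: since $1<s<p\le 2$ gives $s'>p'$, any $\delta$ with $1/(2s')\le\delta<1/(2p')$ does the job, and the paper takes the critical value $\delta=1/(2s')$, quoting \cite[Theorem 2]{LS} both for the failure at the exponent $s$ (logarithmic blow-up of the Muckenhoupt quantity, just as in Theorem \ref{thm3.1}) and for the sufficiency at the exponent $p$. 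With this one sign change, and the uniqueness argument corrected as above, your proof scheme goes through.
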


So, in the scale of Paley-Wiener spaces --- which represents
a subclass of backward shift invariant subspaces --- an
interpolating sequence is not necessarily interpolating in 
an arbitrary bigger space, and so {\it a fortiori} a dual bounded
sequence for a given $p$ is not necessarily interpolating for
a bigger space $K^s_I$, $s<p$. This should motivate why in our
main result discussed in the next section 
we increase the space in two directions to get
interpolation from dual boundedness: we increase the space
by adding factors to the defining inner function {\it and} by
decreasing $p$.

Again we translate the result to the language of unconditionality.
The sequence constructed in this proposition is again a real sequence
which is uniformly separated in the euclidean metric so that 
\eqref{PPineq} holds for $p$ and $s$ and hence the measure
$\sum_{k\in \Z}\delta_{\gamma_k}$ is a Carleson measure. 
This implies that if $\Gamma$ is interpolation for $PW^p_{\pi}$
then  we do not
only have $PW^p_{\pi}|\Gamma\supset l^p$ (recall that the reproducing
kernel is given by the $\sinc$-function in $\gamma_k\in \R$ 
the norm of which is comparable to a constant)
but $PW^p_{\pi}|\Gamma = l^p$. By Corollary \ref{cor2.3new} this
means that $(k_{\gamma}/\|k_{\gamma}\|_{p'})_{\gamma\in\Gamma})$ 
is unconditional in $PW^{p'}_{\pi}$. Clearly, since $\Gamma$ is
not interpolating for $PW^{s}_{\pi}$, the sequence
$(k_{\gamma}/\|k_{\gamma}\|_{s'})_{\gamma\in\Gamma}$ 
cannot be unconditional in $PW^{s'}_{\pi}$. 
We recapitulate these observations in
the following result.

\begin{corollary}
Let $1<p\le 2$. Then for every $1<s<p$ there exists a sequence
$\Gamma$ such that $(k_{\gamma}/\|k_{\gamma}\|_{p'})_{\gamma\in\Gamma})$ 
is unconditional in $PW^{p'}_{\pi}$ and 
$(k_{\gamma}/\|k_{\gamma}\|_{s'})_{\gamma\in\Gamma}$ 
is not unconditional for $PW^{s'}_{\pi}$.
\end{corollary}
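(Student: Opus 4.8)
The plan is to read this corollary as the ``unconditionality'' reformulation of Proposition \ref{prop2.4}, obtained by feeding the sequence produced there into Corollary \ref{cor2.3new}. The inputs I would isolate at the outset are the concrete features of that sequence, which are of the same type as the one built in the proof of Theorem \ref{thm3.1}: it is real and uniformly separated in the euclidean metric, the reproducing kernels are the translates $\sinc(\pi(\cdot-\gamma))$ whose $L^{q'}(\R)$-norms are all comparable to a fixed constant, and consequently by Plancherel--P\'olya \eqref{PPineq} the counting measure $\sum_\gamma\delta_\gamma$ is $PW^r_{\pi}$-Carleson for every $r>1$, in particular for $r=p$ and $r=s$. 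I would fix this sequence $\Gamma$ from Proposition \ref{prop2.4} and then treat the two halves separately.

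For the positive half I would argue exactly as in the paragraph preceding the statement. Since $\Gamma$ is interpolating for $PW^p_{\pi}$ we have $PW^p_{\pi}|\Gamma\supset l^p$, while the Carleson embedding gives $PW^p_{\pi}|\Gamma\subset l^p$ (using $\|k_{\gamma}\|_{p'}\simeq 1$, so that $l^p(1/\|k_{\gamma}\|_{p'})=l^p$). Hence $PW^p_{\pi}|\Gamma=l^p$, and applying Corollary \ref{cor2.3new} with $X=PW^p_{\pi}$, $X^*=PW^{p'}_{\pi}$ and the reflexive ideal space $l=l^p$ yields that $(k_{\gamma}/\|k_{\gamma}\|_{p'})_{\gamma\in\Gamma}$ is unconditional in $PW^{p'}_{\pi}$.

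For the negative half I would again use Corollary \ref{cor2.3new}, now with $X=PW^s_{\pi}$ and $X^*=PW^{s'}_{\pi}$, but run it backwards while appealing to the concrete nature of the construction. The sequence of Proposition \ref{prop2.4} comes from \cite{LS}: it is a density-one, uniformly separated perturbation of the integers whose generating function $G$ satisfies $|G(x)|\simeq d(x,\Gamma)(1+|x|)^{-1/s_0}$, so that $G\notin L^s(\R)$ and $\Gamma$ is a uniqueness set for $PW^s_{\pi}$; moreover the weight $|G/d(\cdot,\Gamma)|^s$ is arranged to fail the Muckenhoupt $(A_s)$-condition, which by \cite{LS} is precisely the failure of $\Gamma$ to be (completely) interpolating for $PW^s_{\pi}$. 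I would then invoke \cite{LS} in the direction that identifies, for such sequences, unconditional bases of normalized reproducing kernels in $PW^{s'}_{\pi}$ with complete interpolating sequences, that is with the validity of $(A_s)$. Since $(A_s)$ fails, $(k_{\gamma}/\|k_{\gamma}\|_{s'})_{\gamma\in\Gamma}$ cannot be an unconditional sequence in $PW^{s'}_{\pi}$.

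I expect this last implication to be the genuinely delicate point, and the word ``clearly'' in the preceding paragraph to hide it. Corollary \ref{cor2.3new} only gives unconditionality as equivalent to $PW^s_{\pi}|\Gamma=l(1/\|k_{\gamma}\|_{s'})$ for \emph{some} reflexive ideal space $l$, and the Carleson embedding only forces $l\subset l^s$; a priori $l$ could be strictly smaller than $l^s$, in which case $\Gamma$ would fail to be $l^s$-interpolating while the normalized kernels remained unconditional. By the Lindenstrauss--Zippin and Pelczynski phenomenon (\cite{LZ}, \cite{pel}) recalled after the first Proposition, this ambiguity cannot be removed by a soft argument, since an unconditional basis in $L^{s'}$ need not be an $l^{s'}$-basis; note also that the usual device for collapsing it, \cite{HNP}, is unavailable here because $\sup_{\gamma}|I(\gamma)|=1$ for a real sequence. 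It is therefore essential to pin $l=l^s$ through the arithmetic of the specific construction via the $(A_s)$-characterization of \cite{LS}, rather than through the abstract equivalence alone, and this is the step I would treat with the most care.
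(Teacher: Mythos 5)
Your positive half is exactly the paper's argument: the sequence of Proposition \ref{prop2.4}, the Plancherel--P\'olya embedding \eqref{PPineq} and the normalization $\|k_\gamma\|_{p'}\simeq 1$ give $PW^p_{\pi}|\Gamma=l^p$, and Corollary \ref{cor2.3new} converts this into unconditionality of $(k_\gamma/\|k_\gamma\|_{p'})_\gamma$ in $PW^{p'}_{\pi}$. You are also right about where the real content of the statement lies: the paper dismisses the negative half with ``clearly'', and the subtlety you point to is genuine --- Corollary \ref{cor2.3new} applied to a hypothetical unconditional sequence only produces \emph{some} reflexive ideal space $l$ with $PW^s_{\pi}|\Gamma=l$, and the Carleson embedding only forces $l\subset l^s$, which is perfectly compatible with $\Gamma$ failing to be $l^s$-interpolating.

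The problem is that your repair does not close this gap; it relocates it into a citation that does not deliver. The theorem of Lyubarskii and Seip \cite{LS} characterizes \emph{complete interpolating} sequences for $PW^s_{\pi}$, i.e.\ sequences for which restriction is an isomorphism onto $\ell^s$, in terms of $(A_s)$. The direction you propose to ``invoke'' --- that an unconditional sequence of normalized kernels in $PW^{s'}_{\pi}$ (complete, by your uniqueness-set observation) must be complete interpolating, hence must satisfy $(A_s)$ --- is precisely the passage from unconditionality with unknown coefficient space to the $\ell^s$-structure, i.e.\ it is the very bridge whose absence you flagged; it is not a statement of \cite{LS}, so as written the argument is circular. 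The bridge is built from the restriction $s<p\le 2$, hence $s'>2$. Write $x_\gamma=k_\gamma/\|k_\gamma\|_{s'}$. If $(x_\gamma)_\gamma$ were unconditional in $PW^{s'}_{\pi}\subset L^{s'}(\R)$, then averaging over signs, Fubini and Khinchin's inequalities give, for finitely supported $a$,
\begin{equation*}
\Bigl\|\sum_\gamma a_\gamma x_\gamma\Bigr\|_{s'}^{s'}
\simeq \int_{\R}\Bigl(\sum_\gamma |a_\gamma|^2|x_\gamma(t)|^2\Bigr)^{s'/2}dt
\ \ge\ \sum_\gamma |a_\gamma|^{s'}\|x_\gamma\|_{s'}^{s'}=\|a\|_{\ell^{s'}}^{s'},
\end{equation*}
where the middle inequality is the pointwise embedding $\ell^2\hookrightarrow\ell^{s'}$, valid because $s'\ge 2$. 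Combined with the upper bound $\lesssim\|a\|_{\ell^{s'}}$ coming from \eqref{PPineq} (the $s'$-Carleson property of the real separated sequence $\Gamma$), this is the two-sided estimate \eqref{equinorm} with $l^*=\ell^{s'}$, and the duality argument in the proof of Proposition \ref{prop2.2} (left invertibility of $T$ equals surjectivity of $T^*$) then makes $\Gamma$ an $l^s$-interpolating sequence, contradicting Proposition \ref{prop2.4} directly --- no completeness of the kernels and no appeal to $(A_s)$ is needed. Two smaller remarks: your observation that \cite{HNP} is unavailable because $|I(\gamma)|=1$ for real $\gamma$ is correct, and is exactly why the Khinchin step must be done by hand; and for the sequence of Proposition \ref{prop2.4} the points are moved \emph{toward} each other, so the generating function grows like $(1+|x|)^{1/s'}$ rather than decays --- harmless for your uniqueness claim, but a sign that you are conflating this construction with the one in Theorem \ref{thm3.1}.
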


Recall that $k_x$, the reproding kernel in $PW^2_{\tau}$ is
given by a $\sinc$-function the norm of which is comparable to
a constant when $x\in \R$.

It can be noted that $s'>p'$ so that $PW^{s'}_{\pi}$ is a smaller space
than $PW^{p'}_{\pi}$.

%\begin{proof}
%bserve first that $\sum_{k\in\Z} \delta_{\gamma_k}$
%where $\Gamma$ is the sequence introduced in Proposition
%\ref{prop2.4} is $PW^r_{\pi}$-Carleson for every $r$ (see
%\eqref{PPineq}).
%Now the corollary is a direct consequence of Proposition \ref{prop2.4} and
%Corollary \ref{cor2.3new}.
%\end{proof}

%We should pause here for another important remark. 

\begin{proof}[Proof of Proposition \ref{prop2.4}]
Since $1<p\le 2$ we have $p_0:=\max(p,p')=p'$ (recall
$1/p+1/p'=1$). In contrast to the above example where we have
'spread out' slightly the integers (by adding a constant
to the positive integers and subtracting the same constant
from the negative integers) to obtain a dual bounded
sequence which is not interpolating ($p\ge 2$) we will now
narrow the integers: let
$\delta_k=-\sign(k)/2s'$. We have in particular
$s_0=\max(s,s')=s'>p'$. Define $\Gamma=(\gamma_k)_{k\in\Z}$
by $\gamma_k=k+\delta_k$,
$k\in \Z\setminus\{0\}$, $\gamma_0=0$.
Then %as in the discussion above (or as in
as the example in \cite[Theorem 2]{LS}, the sequence $\Gamma$
%=(\gamma_k)_{k\in \Z}$ 
is not interpolating for $PW^s_{\pi}$.
On the other hand, since $|\delta_k|=1/2s'<1/2p'$ we deduce
from the sufficiency part of 
\cite[Theorem 2]{LS} that $\Gamma$ is complete interpolating for
$PW^p_{\pi}$.
\end{proof}

%%The proof does not allow to deduce
%Note that since $\Gamma$ is a uniqueness not dual bounded
%in $PW^s_{\pi}$; all we can say is that the family $(\tilde{g}_k)$
%is not suitable.

\begin{remark}
We have mentioned the translations $\Phi_a$, $a\in \R$. These
allow to translate the above example $\Gamma$ to any line
parallel to the real axis: $\Phi_a\Gamma$. By the properties of
$\Phi_a$, we keep the properties of uniform minimality and
(non)-interpolation.
\end{remark}

%We rewrite the result of Proposition \ref{prop2.4} in terms
%of unconditionality. 
%It tells us that in the range of Paley-Wiener
%spaces $PW^p_{\pi}$ with fixed type for every $p'\ge 2$ 
%and for every $s'>p'$ there exist a sequence $\Gamma$ such that
%the associated kernels (suitably normed) form an unconditional
%basis in $PW^{p'}_{\pi}$ without being an unconditional basis
%(suitably renormed) in the bigger space $PW^{s'}_{\pi}$.

We now discuss the effect of increasing the size of the
space in the Paley-Wiener case ``in the direction of the
inner function''. More precisely we will consider the
situation when we replace $I$ by $I^{1+\eps}$ on the
$K^p_I$-side, which means on the
Paley-Wiener side  that we replace the type 
$\pi$ by $\pi (1+\eps)=:\pi+\eta$ for some $\eta>0$.
And for $p=2$, on the Fourier side this means that we
replace $[-\pi,\pi]$ by $[-(\pi+\eta),\pi+\eta]$.

%{\sc new text to come}: 
%According to
We will use \cite[Theorem 2.4]{Se95} to prove the following result.

%the sequence i
%
%
%
%This reminds us the completeness radius
%of Beurling-Malliavin \cite{BM}. 
%
%
%
%Recall that
%the completeness radius of a sequence $S$ in $\C$ is given by
%\beqa
% \rho(S)=\sup\{r:\bigvee_{a\in S} (e^{iax}\chi_{[-r,r]})=L^2(-r,r)\}.
%\eeqa
%%Let $N_S(x)$ be the number of points of $S$ between $0$ and $x$.
%%If for $R\ge 0$ there exists
%%\beqa
%Without going into details of the Beurling-Malliavin result,
%we will appeal to Redheffer's perturbation result \cite{Re}.
%His result tells us that whenever two sequences $S=(a_n)_n$ and 
%$S'=(a'_n)_n$ (for convenience indexed by $n$ here)
%are such that
%\beqa
% \sum_{n\in \N}\left|\frac{1}{a_n}-\frac{1}{a'_n}\right|<\infty,
%\eeqa
%then $\rho(S)=\rho(S')$. In particular, since we know that
%$\rho(\Z)=\pi$ (the system $(e^{inx})_{n\in\Z}$ is an orthogonal
%basis in $L^2(-\pi,\pi)$), every sequence $S'=(s'_n)_{n\in\Z}$ of
%complex numbers
%with $\sup_{n\in\Z}|s'_n-n|=\delta<\infty$ has the same completeness
%radius $\rho(S')=\pi$. Hence for every $\eta>0$, $S'$ is
%incomplete in $L^2(-(\pi+\eta),\pi+\eta)$, 
%and so $S'$ is a non-uniqueness set in $PW^2_{\pi+\eta}$.
%
%It should be pointed out at this stage that the non-uniqueness
%does not allow us to use the results on complete interpolating
%sequences (as in \cite[Theorem 1]{LS} or the results in \cite{HNP},
%see e.g.\ \cite[Theorem D4.4.6]{Nik}).
%
%%We will use this observation together with \cite[Theorem 3]{SchS2}.
%
%Nevertheless \cite[Theorem 3]{SchS2} will enable us to
%prove the following result.

\begin{proposition}\label{prop2.5}
Let $\Gamma=\{\gamma_k\}_{k\in\Z}$ be defined by
$\gamma_0=0$, $\gamma_k=k+\sign(k)/4$. Then $(k_{\gamma})_{\gamma\in
  \Gamma}$ is uniformly minimal and not 
unconditional in $PW^2_{\pi}$, and for every $\eta>0$, $\Gamma$
is an unconditional sequence in $PW^2_{\pi+\eta}$.
\end{proposition}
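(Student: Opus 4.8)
The plan is to verify the three assertions about the specific sequence $\Gamma=\{\gamma_k\}$ with $\gamma_k=k+\sign(k)/4$ using density-type criteria for the Hilbert space $PW^2_{\pi}$, where everything can be expressed through explicit generating functions and the Muckenhoupt $(A_2)$-condition, exactly as in the proof of Proposition~\ref{prop2.4}. For the first claim --- uniform minimality together with failure of unconditionality --- I would follow the template of Theorem~\ref{thm3.1}. First I would write down the generating function $G(z)=z\prod_{k\neq 0}(1-z/\gamma_k)$, an entire function of exponential type $\pi$, and record the size estimate $|G(x)|\simeq d(x,\Gamma)(1+|x|)^{-\beta}$ for the appropriate exponent $\beta$ determined by the shift $\delta_k=\sign(k)/4$. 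Since $p=2$ here, uniform minimality amounts to checking that the functions $g_k(z)=G(z)/(z-\gamma_k)$, suitably normalized, furnish a uniformly bounded dual family in the sense discussed after Definition~\ref{defdb}: the estimate $|g_k(\gamma_k)|\simeq\|g_k\|_{L^2}$ together with the fact that the $\sinc$-kernels have norm comparable to a constant gives $\sup_k\|\tilde g_k\|_2<\infty$ and $\tilde g_k(\gamma_j)=\delta_{jk}$, which is precisely uniform minimality of $(k_\gamma)$. For the failure of unconditionality I would invoke the characterization from \cite{LS} (via Corollary~\ref{cor2.3new}, unconditionality is equivalent to interpolation together with the Carleson embedding, and the latter holds by Plancherel--P\'olya since $\Gamma$ is uniformly separated): it therefore suffices to show $\Gamma$ is \emph{not} interpolating for $PW^2_\pi$, which by \cite[Theorem 1]{LS} reduces to checking that the weight $F^2$, with $F(x)=|G(x)/d(x,\Gamma)|\simeq(1+|x|)^{-1/2}$, fails the $(A_2)$-condition. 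A direct computation of $\frac{1}{|I|}\int_I F^2\,\frac{1}{|I|}\int_I F^{-2}$ over intervals $I=[0,x]$ should again produce a logarithmic divergence, exactly as in Theorem~\ref{thm3.1}.

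For the third assertion --- unconditionality of $(k_\gamma)$ in the enlarged space $PW^2_{\pi+\eta}$ --- the key point is that enlarging the type relaxes the interpolation constraint, so I would apply \cite[Theorem 2.4]{Se95} (the Seip density characterization of unconditional/Riesz bases in $PW^2$). For $p=2$ unconditionality of the reproducing kernels is equivalent to $\Gamma$ being a \emph{set of interpolation} in the Beurling density sense, and this is governed by the lower uniform density $D^-(\Gamma)$: interpolation for type $\tau$ holds precisely when $D^+(\Gamma)<\tau/\pi$, i.e.\ the sequence is sufficiently sparse relative to the type. The sequence $\Gamma$ is a perturbation of $\Z$ by a bounded shift, hence has both upper and lower uniform densities equal to $1$, so that it sits exactly at the critical density for type $\pi$ but strictly below the critical density for any type $\pi+\eta$ with $\eta>0$. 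Therefore Seip's criterion yields that $\Gamma$ fails to be an interpolating (equivalently unconditional) sequence at type exactly $\pi$, but becomes an interpolating sequence --- hence $(k_\gamma)$ an unconditional sequence --- in $PW^2_{\pi+\eta}$ for every $\eta>0$.

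The main obstacle, and the step requiring the most care, is the boundary/critical-density analysis at type exactly $\pi$. Because $\Gamma$ has upper density precisely $1$, it lies on the \emph{border} of Seip's interpolation condition, where the strict-inequality density criterion is inconclusive and one must use the refined information encoded in the $(A_2)$-condition of \cite{LS} rather than density alone. This is exactly why the first part of the argument routes through the Littlewood--Paley/Muckenhoupt machinery of \cite{LS} instead of pure density: the logarithmic failure of $(A_2)$ is the sharp obstruction that density estimates cannot detect. Once the type is increased to $\pi+\eta$ the density drops strictly below critical and the problem becomes soft, so the delicate work is entirely concentrated in establishing the precise growth $|G(x)|\simeq d(x,\Gamma)(1+|x|)^{-1/2}$ and the consequent logarithmic divergence of the $(A_2)$-average. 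I would therefore allocate most of the effort to that explicit estimate, treating the density computation for $PW^2_{\pi+\eta}$ as a routine consequence of $\Gamma$ being a bounded perturbation of the integers.
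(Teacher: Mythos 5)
Your proposal is correct and takes essentially the same route as the paper: the first claim is precisely Theorem \ref{thm3.1} specialized to $p=2$ (generating function $G$, dual family $g_k=G/(z-\gamma_k)$, failure of interpolation via the logarithmic divergence of the $(A_2)$-average from \cite{LS}, converted into failure of unconditionality through the Carleson embedding), and the second claim is deduced, exactly as in the paper, from Seip's density theorem \cite[Theorem 2.4]{Se95}, since $D^+(\Gamma)=1$ lies strictly below the critical density for type $\pi+\eta$. Two minor remarks: your normalization (unconditionality when $D^+(\Gamma)<\tau/\pi$) is the correct reading of Seip's theorem, whereas the paper's text writes $\tau/(2\pi)$ and accordingly concludes only for $\tau>2\pi$; and the assertion in your second paragraph that Seip's criterion itself \emph{yields} failure of interpolation at type exactly $\pi$ is an overstatement (the density condition is only sufficient), though your final paragraph correctly retracts this in favor of the $(A_2)$ obstruction, on which your actual proof of the negative part rests.
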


\begin{proof}[Proof of Proposition \ref{prop2.5}]
The first part of the claim %dual boundedness 
is established by Theorem \ref{thm3.1}.

We use \cite[Theorem 2.4]{Se95} for interpolation in the bigger space.
Seip's theorem furnishes a sufficient density
condition for unconditional sequences in Paley-Wiener spaces when $p=2$
which makes this proof very easy. Recall that $n^+(r)$ denotes
the largest number of points from a sequence of real numbers 
$\Lambda$ to be found in 
an interval of length $r$. The upper uniform density is then
defined as
\beqa
 D^+(\Lambda):=\lim_{r\to\infty}\frac{n^+(r)}{r}
\eeqa
(the limit exists by standard arguments on subadditivity of $n^+(r)$).
%Observe also that $\Lambda
%Recall also that a sequence $\Lambda$ is called uniformly discrete if
%every two elements of $\Lambda$ are separated in the euclidean metric
%by a strictly positif constant. 
\cite[Theorem 2.4]{Se95} states that when a %uniformly discrete
sequence $\Lambda$, which is uniformly separated in the euclidean
distance, satisfies $D^+(\Lambda)<\frac{\D \tau}{\D 2\pi}$,
then $(k_{\lambda}/\|k_{\lambda}\|_{PW^2_{\tau}})_{\lambda\in
\Lambda}$ is an unconditional sequence in  $PW^2_{\tau}$
(strictly speaking Seip's theorem yields the unconditionality 
for exponentials in $L^2([-\tau,\tau])$, but via the Fourier transform
this is of course the same as for reproducing kernels). 
Our sequence $\Gamma$ clearly satisfies $D^+(\Gamma)=1$, and hence
whenever $\tau>2\pi$, then $\Gamma$ is interpolating in $PW^2_{\tau}$.
\end{proof}

The proposition can also be shown by appealing to 
\cite[Theorem 3]{SchS2} which gives a kind of uniform 
non-uniqueness condition as sufficient condition for interpolation
in Paley-Wiener spaces. It can in fact be shown using a perturbation 
result by Redheffer that the
weak limits (in the sense of Beurling) of our sequence $\Gamma$
have the same completeness radius (in the sense of Beurling-Malliavin)
as $\Gamma$, i.e. $\pi$. So increasing the size of the interval makes
these weak limits non-uniqueness in the bigger space (this is the
most difficult condition of Schuster and Seip's result to be checked;
concerning the other conditions appearing in their theorem, i.e.\ 
uniform separation and the two-sided Carleson condition,
these are immediate).

\begin{question*}
A natural question arises in the context of these results. 
Is it possible that the sequence $\Gamma$ of Proposition \ref{prop2.5}
--- which is dual bounded but not interpolating in $PW^2_{\pi}$ ---
is interpolating in $PW^p_{\pi}$ for some %neighbourhood
$p=2-\eps$ (or $p$ in some intervalle $(2-\eps,2)$)
for suitable small $\eps$? 
\end{question*}

So this time we increase the 
size of the space in the direction $p$. Proposition \ref{prop2.4}
indicates that $\eps$ cannot be chosen arbitrarily big.
This proposition also motivates 
another important remark. A sufficient condition for interpolation
in terms of a suitable density and depending on the value of $p$, as
encountered e.g.\ in the context of  Bergman %and Fock 
spaces where a sequence satisfying the criticial density
is automatically interpolating in the bigger spaces, seems
not expectable. This makes %the answer to 
the question very delicate (note that the sequence $\Gamma$ of
Proposition \ref{prop2.5} has the critical density for $PW^2_{\pi}$).\\

%Some evidence that such a result could be the following observation.
%The function $G$ constructed above satisfies $|G(x)|\simeq d(x,\Gamma)
%(1+|x|)^{-1/p_0}$. This is the optimal function vanishing on $\Gamma$
%but it is never $p$-th power integrable on $\R$ (the factor $d(x,\Gamma)$
%is not relevant for this integrability). However, if $p\ge 2$ 
%then we have $(1+|x|)^{-p/p_0}=(1+|x|)^{-1}$ which is critical for
%integrability. Decreasing $p$ will turn this into an integrable function
%so that $\Gamma$ turns into a non-uniqueness set in $PW^s_{\pi}$ for
%$s<p$. This gives some room to perturbate the zeros of $G$ and to
%hope again that \cite[Theorem 3]{SchS2} applies. The situation is 
%of course much more complicated since perturbating the zeros of
%$G$, i.e.\ $\Gamma$, by taking $\Gamma'\in W(\Gamma)$ destroys the
%nice symmetry of $\Gamma$ that allowed estimates of the type
%$|G_{\delta}(x)|=d(x,\Gamma_{\delta})(1+|x|)^{-2\delta}$ where
%$\Gamma_{\delta}$ is built on the same principle as $\Gamma$
%where we have replaced $\gamma_k=k+\sign(k)/(2p_0)$ by
%$\gamma_k=k+\sign(k)\delta$ ($\delta$ not necessarily positive,
%see \cite[Theorem 2]{LS}).

\section{The main result}\label{S3}

Let $I$ be an inner function, i.e.\ a function analytic on $\DD$,
bounded by $1$, and such that $|I(\zeta)|=1$ for a.e.\ $\zeta\in\T$.
Such a function is called one-component when there exists an
$\eps\in (0,1)$ such that $L(I,\eps)=\{z\in\DD:|I(z)|<\eps\}$
is connected. Simple examples of such functions are for 
example $I(z)=\exp((z+1)/(z-1))$ or Blaschke products with
zeros not ``too far'' such as $B_{\Lambda}$ associated with
the interpolating sequence $\Lambda=\{1-1/2^n\}_n$.
One-component inner functions appear for example in the context
of embeddings for star invariant subspaces. For example, Treil
and Volberg \cite{TV} discuss the embedding $K^p_I\subset L^p(\mu)$
when $I$ is one-component.

The following result will be of interest for us

\begin{theorem*}[\cite{Al1}]
If $I$ is a one-component inner function and $1<p\le \infty$, then
\bea\label{Aleks}
 C_1(I,p)\left(\frac{1-|I(a)|^2}{1-|a|^2}\right)^{1-1/p}
 \le \left\|\frac{1-\overline{I(a)}I(z)}{1-\overline{a}z}\right\|_p
 \le  C_2(I,p)\left(\frac{1-|I(a)|^2}{1-|a|^2}\right)^{1-1/p}
\eea
for all $a\in\DD$.
\end{theorem*}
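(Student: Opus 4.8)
The plan is to reduce the two-sided estimate to the single case $p=2$, which is exact, together with control of the boundary size of the kernel, which is where the one-component hypothesis does all the work. Throughout write $N_a=1-|I(a)|^2$ and $w_a=1-|a|^2$, so that $k_a^I(z)=\frac{1-\overline{I(a)}I(z)}{1-\overline{a}z}$ is the reproducing kernel of $K^2_I$ at $a$ and the claim is $\|k_a^I\|_p\simeq (N_a/w_a)^{1-1/p}$. First I would record the exact identity $\|k_a^I\|_2^2=k_a^I(a)=N_a/w_a$, which settles $p=2$ with constants $1$, and the free lower bound $\|k_a^I\|_\infty\ge |k_a^I(a)|=N_a/w_a$ coming from the maximum principle (the function is analytic in $z$).

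The heart of the matter is the reverse inequality $\|k_a^I\|_\infty\le C(I)\,N_a/w_a$. On $\T$ one has $|k_a^I(\zeta)|=\frac{|1-\overline{I(a)}I(\zeta)|}{|1-\overline{a}\zeta|}$, and writing $I(a)=\rho e^{i\phi}$, $I(\zeta)=e^{i\psi(\zeta)}$ gives $|1-\overline{I(a)}I(\zeta)|^2\simeq N_a^2+\sin^2\tfrac{\psi(\zeta)-\phi}{2}$ while $|1-\overline{a}\zeta|^2\simeq w_a^2+\sin^2\tfrac{\theta-\theta_0}{2}$, where $a=|a|e^{i\theta_0}$. The estimate then amounts to comparing the rate at which the argument $\psi$ of $I$ advances along $\T$ with the quantity $N_a/w_a$; this is governed by the boundary derivative $\psi'(\theta_0)=\lim_{r\to1}\frac{1-|I(re^{i\theta_0})|^2}{1-r^2}$, and it is here that connectedness of the level set $L(I,\eps)$ enters, through the comparison $1-|I(z)|^2\simeq |I'(z)|(1-|z|^2)$ (the inequality $\le$ is Schwarz--Pick, the reverse $\gtrsim$ being the substantive consequence of one-componentness) together with the resulting slow variation of $z\mapsto (1-|I(z)|^2)/(1-|z|^2)$ across a component of the level set. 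I expect this to be the main obstacle: all the uniformity in $a$ is concentrated in showing that $N_a/w_a$ controls $|k_a^I|$ not only at $a$ but everywhere on $\T$.

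Granting the endpoint estimate $\|k_a^I\|_\infty\simeq N_a/w_a$, the upper bound for $2\le p\le \infty$ is immediate from log-convexity of $L^p$ norms: $\|k_a^I\|_p\le \|k_a^I\|_2^{2/p}\,\|k_a^I\|_\infty^{1-2/p}\lesssim (N_a/w_a)^{1/p}(N_a/w_a)^{1-2/p}=(N_a/w_a)^{1-1/p}$. For the matching lower bound, and for the whole range $1<p<\infty$ at once, I would instead work directly with the distribution of $|k_a^I|$ on $\T$. Analysing the two quadratic forms above (using $\psi'(\theta_0)\simeq N_a/w_a=:D$, in the representative regime $N_a\gtrsim w_a$, i.e.\ $D\ge 1$) shows that $|k_a^I(\zeta)|\simeq D$ on a boundary arc about $\theta_0$ of length $\simeq 1/D$ and that $|k_a^I(\zeta)|\simeq 1/|\theta-\theta_0|$ off that arc. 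Integrating, $\int_\T |k_a^I|^p\,dm\simeq D^p\cdot\tfrac1D+\int_{1/D}^{1}|\theta-\theta_0|^{-p}\,d\theta\simeq D^{p-1}$, which yields $\|k_a^I\|_p\simeq (N_a/w_a)^{1-1/p}$ in both directions simultaneously.

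A couple of bookkeeping points I would attend to. First, the constants are allowed to depend on $I$ (and $p$), which is essential: already for a single Blaschke factor $I=b_{z_1}$ one checks by hand that $\|k_a^I\|_p\simeq (1-|z_1|^2)^{1/p}/|1-\overline{z_1}a|$ and $N_a/w_a=(1-|z_1|^2)/|1-\overline{z_1}a|^2$, and these match only after using $|1-\overline{z_1}a|\gtrsim 1-|z_1|^2$, with the comparison constant depending on $z_1$. Second, the level-set hypothesis is used precisely at the value $\eps$ for which $L(I,\eps)$ is connected; the step where $\psi'$ is shown to be comparable to $N_a/w_a$ over the relevant scale is the only place the full strength of one-componentness is needed, and I would isolate it as a lemma before assembling the $L^p$ bounds above.
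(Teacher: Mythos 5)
First, a point of comparison that matters here: the paper contains no proof of this statement at all. It is Aleksandrov's theorem, imported verbatim from \cite{Al1} and used as a black box throughout Sections 2--4, so your proposal can only be judged on its own terms. On those terms it is a reduction, not a proof: the step you set aside as ``the main obstacle'' is not an auxiliary lemma but is essentially the theorem itself. All of the uniformity in $a$ sits in the estimate $\|k_a^I\|_\infty\le C(I)\,(1-|I(a)|^2)/(1-|a|^2)$; indeed the finiteness of $\sup_{a\in\DD}\|k_a^I\|_\infty/\|k_a^I\|_2^2$ is known to be \emph{equivalent} to one-componentness, so no soft argument can produce it, and deriving it requires the actual machinery of \cite{Al1} (analytic continuation of $I$ across the arcs of $\T$ off its boundary spectrum, two-sided estimates of $|I'|$ near $\partial L(I,\eps)$, and the resulting control of the advance of $\arg I(\eit)$ near $\theta_0$), none of which your sketch carries out. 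Granting that lemma, your bookkeeping does yield the upper bound for all $1<p\le\infty$: log-convexity for $p\ge 2$, and for $1<p<2$ integration of the pointwise bound $|k_a^I(\zeta)|\lesssim\min\{D,\,|\theta-\theta_0|^{-1}\}$, in your notation $D=N_a/w_a$.

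Two of your stated ingredients are, however, wrong as written, and the lower bound needs repair even modulo the key lemma. (a) Schwarz--Pick gives $|I'(z)|(1-|z|^2)\le 1-|I(z)|^2$, the opposite of what you attribute to it; it is the reverse inequality that would have to come from one-componentness. (b) The two-sided claim $|k_a^I(\zeta)|\simeq 1/|\theta-\theta_0|$ off the central arc is false for any inner function that is not a finite Blaschke product: the numerator $|1-\overline{I(a)}I(\zeta)|$ collapses to $\simeq 1-|I(a)|^2$ on infinitely many arcs, each time $\arg I(\zeta)$ returns to $\arg I(a)$ modulo $2\pi$; only the upper bound survives. Similarly, the claim $|k_a^I|\simeq D$ on an arc of length $1/D$ is exactly the two-sided argument-advance statement and does not follow from the $L^\infty$ bound alone: the trivial estimate $|1-\overline{I(a)}I(\zeta)|\ge 1-|I(a)|$ gives only $\int_{\T}|k_a^I|^p\,dm\gtrsim D^{p-1}N_a$, short by the factor $N_a=1-|I(a)|^2$, which is fatal precisely in the interesting regime $|I(a)|\to 1$. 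The clean repair is duality through the reproducing identity: once the upper bound is known at the conjugate exponent, $D=\|k_a^I\|_2^2\le\|k_a^I\|_p\,\|k_a^I\|_{p'}\lesssim\|k_a^I\|_p\,D^{1/p}$, whence $\|k_a^I\|_p\gtrsim D^{1-1/p}$ for all $1<p<\infty$ (and $\|k_a^I\|_\infty\ge k_a^I(a)=D$ handles $p=\infty$); this also eliminates your restriction to the regime $N_a\gtrsim w_a$ and any need for distributional lower bounds off the central arc.
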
 

We will now discuss the principal results
that lead to Theorem \ref{thm1}.

%{\hskip 1.8em}These theorems will be consequence of the next lemma.\ \par

%As above, if
For a sequence $S$  of points in ${\mathbb{D}},$ 
we introduce the related sequence 
$\{\epsilon _{a}\}_{a\in S}$ of independent 
Bernoulli variables.

We now increase $K^p_I$, when $p$ is fixed, which means that we multiply
a factor to the inner function  $I$.
%{\hskip 1.8em}
%Let $I(z)$ be a inner function in $H^{\infty }({\mathbb{D}})$ 
More precisely let $J=IE$ where $E$ is another inner function. 
%in $H^{\infty }({\mathbb{D}})$ 
%that we will also suppose ``one-component''.
Recall that $\overline{K^p_I+IK^p_E}=K^p_J$ (which gives an idea
on the increase of the space; note that this identity can also be
derived from a more general one in de Branges-Rovnyak spaces).

We first discuss %the situation %that 
when dual boundedness for 
$p>1$ implies interpolation for $q=1$.

\begin{lemma} \label{casep=1}
Let $S\subset {\mathbb{D}}$ be 
dual bounded in $K^p_I$,
%a sequence of points 
$p>1$, 
%such that a dual system $\{\rho _{p,a}\}_{a\in S}$ 
%exists in $\displaystyle 
%K_{I}^{p};$ l
and let $E$ be another inner function.
% in $H^{\infty }({\mathbb{D}}),$ 
If % we have :
\begin{equation} 
\displaystyle \Vert{\displaystyle k_{a}^{J}}\Vert _{\infty 
}\simeq \frac{\displaystyle \displaystyle \Vert{\displaystyle k_{a}^{I}}\Vert 
_{p'}\displaystyle \Vert{\displaystyle k_{a}^{E}}\Vert _{2}^{2}}{\displaystyle 
\displaystyle \Vert{\displaystyle k_{a}^{E}}\Vert _{p'}},\label{lemmeP1}
\end{equation} 
then $S$ is interpolating in $K_{J}^{1}$ with $J=IE.$\ \par
\end{lemma}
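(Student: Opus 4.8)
The plan is to reduce interpolation in $K^1_J$ to the dual-bounded data already available in $K^p_I$, multiplying each biorthogonal function by a reproducing kernel of $K^{p'}_E$ so as to land in the smaller space $K^1_J=K^1_{IE}$. First I would unpack the hypothesis. Dual boundedness of $S$ in $K^p_I$ means, by the Hahn--Banach reformulation recorded after Definition \ref{defdb}, that there is a family $(\rho_a)_{a\in S}\subset K^p_I$ with $M:=\sup_{a\in S}\|\rho_a\|_p<\infty$ and $\rho_a(b)=\delta_{ab}\|E_b\|_{(K^p_I)^*}$; since $\|E_a\|_{(K^p_I)^*}\simeq\|k_a^I\|_{p'}$, this gives $|\rho_a(a)|\simeq\|k_a^I\|_{p'}$ and $\rho_a(b)=0$ for $b\ne a$ in $S$. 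Next I would record the product inclusion $K^p_I\cdot K^{p'}_E\subset K^1_J$: writing boundary factorizations $g=I\overline{\psi}$ with $\psi\in H^p_0$ for $g\in K^p_I$, and $h=E\overline{\phi}$ with $\phi\in H^{p'}_0$ for $h\in K^{p'}_E$, Hölder gives $gh\in H^1$, while $gh=J\overline{\psi\phi}$ with $\psi\phi\in H^1_0$, so $gh\in H^1\cap J\overline{H^1_0}=K^1_J$ with $\|gh\|_1\le\|g\|_p\|h\|_{p'}$.

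Then I would choose the multiplier $h_a:=k_a^E\in K^{p'}_E$, note that $h_a(a)=\|k_a^E\|_2^2>0$, and set
\[
 f_a:=\frac{\rho_a\,k_a^E}{\rho_a(a)\,k_a^E(a)}\in K^1_J .
\]
By construction $f_a(b)=\delta_{ab}$ for $b\in S$, since the off-diagonal vanishing is already carried by $\rho_a$. The key estimate is the norm bound
\[
 \|f_a\|_1\le\frac{\|\rho_a\|_p\,\|k_a^E\|_{p'}}{|\rho_a(a)|\,|k_a^E(a)|}
 \lesssim M\,\frac{\|k_a^E\|_{p'}}{\|k_a^I\|_{p'}\,\|k_a^E\|_2^2}
 \lesssim \frac{1}{\|k_a^J\|_\infty},
\]
where the final step is exactly the content of the hypothesis \eqref{lemmeP1} (only the upper bound $\|k_a^J\|_\infty\lesssim\|k_a^I\|_{p'}\|k_a^E\|_2^2/\|k_a^E\|_{p'}$ is needed). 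The virtue of this construction is that \eqref{lemmeP1} is precisely calibrated to convert the uniform $L^p$-control of the $\rho_a$ into $1/\|k_a^J\|_\infty$ weights, which are the correct weights for $K^1_J$-interpolation.

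Finally, given any admissible target $v=(v_a)_{a\in S}$ with $\sum_{a\in S}|v_a|/\|k_a^J\|_\infty<\infty$ (the admissible data for interpolation in $K^1_J$, since $\|E_a\|_{(K^1_J)^*}\simeq\|k_a^J\|_\infty$), I would set $f:=\sum_{a\in S}v_a f_a$. The estimate above gives $\sum_a|v_a|\,\|f_a\|_1\lesssim\sum_a|v_a|/\|k_a^J\|_\infty<\infty$, so the series converges absolutely in $L^1$; because $K^1_J$ is closed in $H^1$, the limit $f$ lies in $K^1_J$. Point evaluation at each $b\in\DD$ is continuous on $H^1$, hence passes to the limit and yields $f(b)=\sum_a v_a f_a(b)=v_b$ for $b\in S$, so $S$ is interpolating in $K^1_J$. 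I expect the main obstacle to be the passage to the non-reflexive endpoint $q=1$: beyond pinning down the duality normalizations $|\rho_a(a)|\simeq\|k_a^I\|_{p'}$ and $\|E_a\|_{(K^1_J)^*}\simeq\|k_a^J\|_\infty$, one must argue convergence of the interpolating series \emph{directly} in the $L^1$ norm and invoke continuity of point evaluations to transfer the interpolation identities to the limit, rather than relying on an isomorphism onto a sequence space as in the reflexive setting of Proposition \ref{prop2.2}.
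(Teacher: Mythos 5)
Your proof is correct and follows essentially the same route as the paper's: both multiply the dual system $(\rho_a)$ coming from dual boundedness in $K^p_I$ by the reproducing kernels $k_a^E$, use H\"older together with the identity $k_a^E(a)=\|k_a^E\|_2^2$ and the hypothesis \eqref{lemmeP1} to get the $1/\|k_a^J\|_\infty$ bound, and sum the absolutely convergent series in $K^1_J$. The only difference is presentational: you spell out the product inclusion $K^p_I\cdot K^{p'}_E\subset K^1_J$ and the passage of point evaluations to the $L^1$ limit, which the paper leaves implicit.
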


\begin{proof}
Let first 
$c_a=\frac{\displaystyle 
\displaystyle \Vert{\displaystyle k_{a}^{E}}\Vert _{p'}\displaystyle 
\Vert{\displaystyle k_{a}^{J}}\Vert _{\infty }}
{\displaystyle \displaystyle \Vert{\displaystyle k_{a}^{I}}\Vert 
_{p'}\displaystyle \Vert{\displaystyle k_{a}^{E}}\Vert _{2}^{2} }$ which
is comparable to a uniform constant.

%{\hskip 1.8em}Proof :\ \par
%The idea is to take 
Since $S$ is dual bounded in $K^p_I$, the sequence
$(k^I_a/\|k^I_a\|_{p'})_{a\in S}$ is uniformly minimal, so that
there exists a dual sequence $(\rho_{p,a})_{a\in S}$ in 
$K^p_I$: %is a dual system, we have 
$\langle \rho_{p,a}, k^I_{p',b}\rangle=\delta_{ab}$, i.e.\ 
$\rho_{p,a}(b)=\delta_{ab}\|k^I_{b}\|_{p'}$, and 
$\sup_{a\in S}\|\rho_{p,a}\|_p<\infty$.
As in \cite{Amar} the idea is now to take
\beqa
  \forall \lambda \in \ell ^{1},\ 
 T(\lambda ):= \sum_{a\in S}^{}{\lambda _{a}
 c_a\rho _{p,a}\frac{ k_{a}^{E}}{\left\Vert{ k_{a}^{E}}\right\Vert _{p'}}.}
\eeqa
The sum defining $T$ converges clearly under the assumption of 
the theorem since $\lambda$ is summable.
%Note that since  
Also $k_a^E(a)=\|k^E_a\|_2^2$, and hence
\beqa
 T(\lambda )(a)=\lambda _{a}c_a\rho _{a,p}(a)\frac{k_{a}^{E}(a)}
  {\left\Vert{k_{a}^{E}}\right\Vert _{p'}}
  =\lambda _{a}c_a\frac{\Vert{k_{a}^{I}}\Vert _{p'}
   \Vert{k_{a}^{E}}\Vert_{2}^{2}}{\Vert{k_{a}^{E}}
   \Vert _{p'}}= \lambda_a \|k_a^J\|_{\infty}.
\eeqa
So, by equation \eqref{lemmeP1}, $S$ is interpolating in 
$K_{J}^{1}$.
\end{proof}

We shall now discuss the general situation.

\begin{lemma} \label{algUniExt411}
Suppose that $I$ and $E$ are one-component inner functions.
Let $S\subset {\mathbb{D}}$ be a 
dual bounded sequence in $K^p_I$; 
%of points such that a dual system $\{\rho _{p,a}\}_{a\in S}$ exists in 
%$\displaystyle K_{I}^{p};$ 
let $1\leq s<p$ and $q$ be such that $\displaystyle 
\frac{\displaystyle 1}{\displaystyle s}=\frac{\displaystyle 1}{\displaystyle 
p}+\frac{\displaystyle 1}{\displaystyle q};$ 
%let $E$ be another
%inner function and
suppose that the following conditions are satisfied.
\begin{itemize}
\item[(i)] 
$\Vert{k_{a}^{J}}\Vert _{s'}\simeq
\frac{\displaystyle\Vert{k_{a}^{E}}\Vert 
_{s'}\Vert{k_{a}^{I}}\Vert _{p'}}{\displaystyle\Vert{k_{a}^{E}}\Vert 
_{p'}}$;
\item[(ii)] $\displaystyle \forall \lambda \in \ell ^{p}(S),\ {\mathbb{E}}\displaystyle \left[{\displaystyle 
\displaystyle \left\Vert{\displaystyle \displaystyle \sum_{a\in S}^{}{\lambda 
_{a}\epsilon _{a}\rho _{p,a}}}\right\Vert _{p}^{p}}\right] \lesssim\displaystyle 
\left\Vert{\displaystyle \lambda }\right\Vert _{\displaystyle \ell
  ^{p}}^{p}$
\item[(iii)] if $q>2,\ S$ is weakly $q$-Carleson in $K_{E}^{q},$ 
\end{itemize}
Then $S$ is $\displaystyle K_{J}^{s}$ interpolating and moreover 
there exists a bounded linear interpolation operator 
$T:l^s(S)\lra K^s_J$, $T(\nu)(a)=\nu_a  \|k_a^J\|_{s'}$.
%$S$ has 
%the $L.E.P..$\ \par
\end{lemma}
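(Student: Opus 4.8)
The plan is to generalize the construction of Lemma \ref{casep=1} from the target space $K^1_J$ (where the interpolation operator mapped into $\ell^1$) to the target space $K^s_J$, using the Bernoulli-randomized norm estimate (ii) to control the $s$-norm of the interpolant. As in the $s=1$ case, I would define for $\nu\in\ell^s(S)$ the candidate interpolation operator
\beqa
 T(\nu):=\sum_{a\in S}\nu_a c_a\,\rho_{p,a}\,\frac{k_a^E}{\|k_a^E\|_{p'}},
\eeqa
where $c_a=\|k_a^E\|_{p'}\|k_a^J\|_{s'}/(\|k_a^I\|_{p'}\|k_a^E\|_{s'})$ is the normalizing factor dictated by condition (i) (replacing the $\|k_a^J\|_\infty$ of the previous lemma by $\|k_a^J\|_{s'}$, so that $c_a$ is again comparable to a uniform constant). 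The reproducing property $k_a^E(a)=\|k_a^E\|_2^2$ together with $\rho_{p,a}(b)=\delta_{ab}\|k_b^I\|_{p'}$ gives, exactly as before, the interpolation identity $T(\nu)(a)=\nu_a\|k_a^J\|_{s'}$, which is the desired conclusion once boundedness is established. The remaining content is therefore entirely about showing $T:\ell^s(S)\to K^s_J$ is bounded.

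To bound $\|T(\nu)\|_s$ I would pass to the randomized average and use Khinchin's inequality (valid since $s<\infty$): up to constants, $\|T(\nu)\|_s^s$ is comparable to the expectation $\mathbb{E}\|\sum_a \nu_a\eps_a c_a\rho_{p,a}\,k_a^E/\|k_a^E\|_{p'}\|_s^s$, which after again invoking Khinchin pointwise reduces matters to estimating the square-function expression $\big\|\big(\sum_a |\nu_a|^2|\rho_{p,a}|^2|k_a^E|^2/\|k_a^E\|_{p'}^2\big)^{1/2}\big\|_s$ (the uniformly bounded $c_a$ being absorbed). Now I would split the two factors $\rho_{p,a}$ and $k_a^E$ according to the conjugate-type exponents: since $1/s=1/p+1/q$, Hölder's inequality lets me separate the $\rho_{p,a}$-part (to be controlled in $L^p$ by hypothesis (ii)) from the $k_a^E$-part (to be controlled in $L^q$ by the weak $q$-Carleson hypothesis (iii)). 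Concretely, I expect to write the product of square functions as a product of an $L^p$-square-function in the $\rho$'s and an $L^q$-square-function in the normalized $k_a^E$'s, then apply Hölder with exponents $p/s$ and $q/s$.

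The main obstacle, and the step requiring genuine care, is this separation of variables inside the square function: one must combine the Bernoulli estimate (ii), which controls $\mathbb{E}\|\sum_a\nu_a\eps_a\rho_{p,a}\|_p^p\lesssim\|\nu\|_{\ell^p}^p$, with the weak Carleson bound (iii), $\|\sum_a|\mu_a|^2|k_{q,a}^E|^2\|_{q/2}\lesssim\|\mu\|_q^2$, in such a way that the coefficients $\nu_a$ end up measured only in $\ell^s$ rather than in $\ell^p$ or $\ell^q$ separately. The exponent arithmetic $1/s=1/p+1/q$ is exactly what makes a single Hölder application close the estimate, but verifying that the square-function / randomization manipulations are legitimate when $q>2$ (hence the \emph{weak} $q$-Carleson formulation, which is the right square-function analogue by \cite[Lemma 3.2]{Amar}) is the delicate point; when $q\le 2$ the $k^E_a$ contribution is handled more directly and hypothesis (iii) is not needed. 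Once the product estimate yields $\|T(\nu)\|_s\lesssim\|\nu\|_{\ell^s}$, the operator $T$ is the required bounded interpolation operator and the proof is complete.
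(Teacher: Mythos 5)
The decisive step of your norm estimate is wrong in a way that cannot be repaired within your scheme. You assert that, by Khinchin's inequality, $\|T(\nu)\|_s^s$ is comparable to the randomized average $\mathbb{E}\big\|\sum_{a}\nu_a\eps_a c_a\rho_{p,a}k_a^E/\|k_a^E\|_{p'}\big\|_s^s$. Khinchin compares this expectation with the $L^s$-norm of the square function; it gives no comparison at all between the expectation and the \emph{deterministic} sum $T(\nu)$, which is a single realization of the signs (with $n$ equal summands the deterministic sum has norm of order $n$, the randomized average of order $\sqrt{n}$). Since a sum cannot in general be dominated by its square function, your reduction never gets started. The paper uses the Bernoulli variables for \emph{polarization}, not for norm comparison: it factors the data as $\nu_a=\lambda_a\mu_a$ with $\mu_a=|\nu_a|^{s/q}\in\ell^q$ and $\lambda_a=(\nu_a/|\nu_a|)|\nu_a|^{s/p}\in\ell^p$, so that $\|\nu\|_s=\|\lambda\|_p\|\mu\|_q$, forms the two random series $f(\eps,z)=\sum_a\lambda_a c_a\eps_a\rho_{p,a}(z)$ and $g(\eps,z)=\sum_a\mu_a\eps_a k_{q,a}^E(z)$, and recovers the interpolant from the identity $h=\mathbb{E}(fg)$, valid because $\mathbb{E}(\eps_a\eps_b)=\delta_{ab}$ kills the off-diagonal terms. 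Then $|h|^s\le\mathbb{E}(|fg|^s)$, and H\"older with exponents $p/s$ and $q/s$ applied to $\mathbb{E}\int_{\T}|fg|^s\,d\sigma$ separates an $f$-factor, controlled by hypothesis (ii) because $\lambda\in\ell^p$, from a $g$-factor, controlled by Fubini, Khinchin and hypothesis (iii) when $q>2$ (or by the embedding $\ell^q\subset\ell^2$ when $q\le 2$) because $\mu\in\ell^q$. The coefficient factorization and the identity $h=\mathbb{E}(fg)$ are precisely the device you are missing; your proposal names the goal (``a single H\"older application closes the estimate'') but supplies no mechanism to achieve it.

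There is also a normalization mismatch in your operator. With your $c_a$ and the kernel normalized in $L^{p'}$, the identities $\rho_{p,a}(a)=\|k_a^I\|_{p'}$ and $k_a^E(a)=\|k_a^E\|_2^2$ give $T(\nu)(a)=\nu_a\|k_a^J\|_{s'}\,\|k_a^E\|_2^2/\|k_a^E\|_{s'}$, and by Aleksandrov's estimate \eqref{Aleks} the parasitic factor is comparable to $\big((1-|E(a)|^2)/(1-|a|^2)\big)^{1/s'}$, which is unbounded on $S$ in general; if instead you correct $c_a$ so that the interpolation identity becomes exact, then $c_a$ itself ceases to be uniformly comparable to a constant and can no longer be absorbed into the norm estimate. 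The paper resolves this tension by normalizing the kernel in $L^q$: with $h=\sum_a\nu_a c_a\rho_{p,a}k_{q,a}^E$ and $c_a=\|k_a^E\|_q\|k_a^J\|_{s'}/(\|k_a^I\|_{p'}k_a^E(a))$ one gets both $h(a)=\nu_a\|k_a^J\|_{s'}$ exactly and $c_a\simeq 1$, the latter by (i), Aleksandrov's estimate (this is where one-componentness of $E$ enters) and the exponent identity $1/q'+1/s-1/p-1=0$. Finally, two smaller omissions: the construction must be carried out for finitely supported $\nu$ with constants independent of the truncation, concluding by a normal-families argument, and the case $s=1$ is disposed of separately via Lemma \ref{casep=1}.
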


%\begin{remark} 
Observe that we do not need to require the 
Carleson condition on $S$ when $q\leq 2.$
%\end{remark}

\begin{remark}\label{rem4.1}
Before proving the result, we discuss 
some special cases where the condition (i) is satisfied.
% in particular in each of 
%the following cases:
Recall from  \eqref{Aleks}
that for an arbitrary inner one-component function $\Theta$ we
have %by \eqref{Alesk}:
\bea\label{equivrel1}
 \displaystyle \displaystyle \Vert{\displaystyle k_{a}^{\Theta}}
  \Vert _{s'}
 \simeq \displaystyle \left({\displaystyle 
  \frac{\displaystyle 1-\displaystyle \left\vert{\displaystyle
        \Theta(a)} \right\vert ^{2}}{\displaystyle 1-\displaystyle 
   \left\vert{\displaystyle a}\right\vert ^{2}}}\right) ^{1/s}.
\eea
Hence when $I,E$ are one-component,
we get
\bea\label{equivrel2}
 \displaystyle \frac{\displaystyle \Vert{\displaystyle k_{a}^{E}}\Vert 
   _{s'}\displaystyle\Vert{\displaystyle k_{a}^{I}}\Vert _{p'}}
   {\displaystyle \Vert{\displaystyle k_{a}^{E}}\Vert _{p'}}
 &\simeq& \frac{\displaystyle \left({\displaystyle 1-\displaystyle 
  \vert{\displaystyle E(a)}\vert ^{2}}\right) ^{1/s}\displaystyle 
  \left({\displaystyle 1-\vert{\displaystyle I(a)} \vert ^{2}}\right) 
   ^{1/p}}{\displaystyle \left({\displaystyle 
    1-\displaystyle \vert{\displaystyle E(a)}\vert ^{2}}\right)^{1/p}
   \left({\displaystyle 1-\displaystyle \left\vert{\displaystyle a}
   \right\vert ^{2}}\right) ^{1/s}}\nn\\
 &=&\displaystyle \frac{\left({\displaystyle 1-\displaystyle \left
  \vert{\displaystyle E(a)}\right\vert ^{2}}\right) ^{1/q}
  \displaystyle \left({\displaystyle 1-\displaystyle 
  \left\vert{\displaystyle I(a)}\right\vert ^{2}}\right) ^{1/p}}
  {\displaystyle \left({\displaystyle 1-\displaystyle \left
  \vert{\displaystyle a}\right\vert ^{2}}\right) ^{1/s}}.
\eea
From this we can deduce that (i) holds in
the following cases.
\begin{itemize}
\item[(1)]
Suppose $E,I$ are one-component and
$\displaystyle \sup _{a\in S}\displaystyle \left\vert{\displaystyle E(a)}\right\vert 
\leq \eta <1$ and $\displaystyle \sup _{a\in S}\displaystyle \left\vert{\displaystyle I(a)}\right\vert 
\leq \eta <1$. 
Suppose also that $J=IE$ is one-component (it is not clear whether
this follows from $I$ and $E$ being one-component). Clearly
$\sup_{a\in S} |J(a)|<1$, and (i) follows.
%Note that from the maximum principle (or rather the minimum
%principle) one can deduce that when $L(I,\eps)$ is connected for some
%$\eps$, then $L(I,\delta)$ will be connected for $\eps\le\delta <1$.
%Let now $\eps$ be the bigger one of the epsilons guaranteeing that 
%$E$ or $I$ is one-component. There is a closed arc in $\D$
%joining $L(I,\eps)$ and $L(J,\eps)$, and on this arc, $I$ and $J$
%are uniformly bounded above by some $\eps\le \eta<1$. Taking now
%$u,v\in L(IE,\eta^2)$, then at least one of $|I(u)|$, $|E(u)|$ 
%and at least one of $|I(v)|$, $|E(v)|$ has to be less than $\eta$.
%We can have three cases (in fact four). 
%Either
%$|I(u)|<\eta$ and $|I(v)|<\eta$ in which case $u,v$ are in
%the same connected component of $L(IE,\eta)$ since $L(I,\eta)\subset
%L(IE,\eta)$ and $u,v\in L(I,\eta)$, or $|E(u)|<\eta$ and $|E(v)|<\eta$
%where the same argument shows that again $u,v$ are in the same
%connected component, or $|I(u)|<\eta$ and $|E(v)|<\eta$.
%Let us discuss this last case.
\item[(2)]
$E=I$ and $I$ is one-component, then $J=I^{2}$ (note that it is clear
that when $L(I,\eps)$ is connected then so is $L(I^2,\eps^2)$); in 
this case we do not need the $\sup$-condition, since
\beqa
  \displaystyle \displaystyle 
  \left({\displaystyle 1-\displaystyle \left\vert{\displaystyle 
   I(a)}\right\vert ^{4}}\right) ^{1/s}
 \simeq \displaystyle \left({\displaystyle 1-\displaystyle 
   \left\vert{\displaystyle I(a)}\right\vert ^{2}}\right) ^{1/q}
  \displaystyle \left({\displaystyle 1-\displaystyle \left\vert{\displaystyle 
   I(a)}\right\vert ^{2}}\right) ^{1/p},
\eeqa
which by \eqref{equivrel1} and \eqref{equivrel2} yields (i);
\item[(3)]
$I$ singular and $\forall \alpha >0,\ E=I^{\alpha
}$ which implies $J=I^{1+\alpha }.$
\end{itemize}
\end{remark}

\begin{remark} If $p=1$ then dual boundedness of $S$ 
in $K_{I}^{1}$ implies that $S$ interpolating in $K_{I}^{1}$
(take the interpolation operator constructed in the proof
of Lemma \ref{casep=1}).
\end{remark}

\begin{proof}[Proof of the Lemma]
In view of Lemma \ref{casep=1} we can suppose $1<s<p$. 

In order to prove the lemma we will construct a function
$f$ interpolating a sequence $\nu\in l^s$ weighted by the norm
of the reproducing kernels.
To do this,
%In the estimates to come 
we will consider finitely supported
sequences $\nu$, say with only the first $N$ components possibly
different from zero, and check that the constants do not depend
on $N\in \N$.
% get estimates independent of $N,$ i.e.\ \par
So, for $1<s<p$ and $\nu \in \ell _{N}^{s}$ we shall build a function 
$h\in K_{J}^{s}$ such that:
\beqa
 \forall j=0,...,N-1,\ h(a_{j})=\nu _{j}\Vert{k^J_{a_{j}}}\Vert _{s'}
 \text{ and } \Vert{h}\Vert _{K_{J}^{s}}\leq C\Vert{\nu }\Vert 
 _{\ell _{N}^{s}}.
\eeqa
where the constant $C$ is independent of $N$. The conclusion follows 
from a normal families argument (see also \cite{Am07}).

We choose $q$ such that $\displaystyle \frac{\displaystyle 1}{\displaystyle s}=\frac{\displaystyle 1}{\displaystyle 
p}+\frac{\displaystyle 1}{\displaystyle q};$ then $q\in ]p',\infty [$ 
with $p'$ the conjugate exponent of $p$ and we set $\nu _{j}=\lambda _{j}\mu _{j}$ 
with $\displaystyle \mu _{j}:=\displaystyle \left\vert{\displaystyle \nu _{j}}\right\vert 
^{s/q}\in \ell ^{q},\ \lambda _{j}:=\frac{\displaystyle \nu _{j}}{\displaystyle 
\displaystyle \left\vert{\displaystyle \nu _{j}}\right\vert }\displaystyle 
\left\vert{\displaystyle \nu _{j}}\right\vert ^{s/p}\in \ell ^{p}$ so that
$\left\Vert{\nu }\right\Vert _{s}=\left\Vert{\lambda }\right\Vert _{p}\left\Vert{\mu 
}\right\Vert _{q}.$

Let now
\beqa
 \displaystyle c_{a}:=
%\frac{\displaystyle \displaystyle 
% \Vert{\displaystyle k_{a}^{E}}\Vert_{s'}}{\displaystyle
% \Vert{\displaystyle k_{a}^{E}}\Vert _{p'}k_{q,a}^{E}(a)}
% =
\frac{\displaystyle \Vert{\displaystyle k_{a}^{E}}\Vert _{q}
 \displaystyle \Vert{\displaystyle k_{a}^{J}}\Vert _{s'}}
  {\displaystyle  \Vert{\displaystyle k_{a}^{I}}\Vert
    _{p'}k_{a}^{E}(a)}.
\eeqa
By (i), we have
\beqa
 c_a\simeq \frac{\displaystyle \Vert{\displaystyle k_{a}^{E}}\Vert _{q}
 \displaystyle \Vert{\displaystyle k_{a}^{E}}\Vert _{s'}}
  {\displaystyle  \Vert{\displaystyle k_{a}^{E}}\Vert
    _{p'}k_{a}^{E}(a)}=
 \frac{\displaystyle \Vert{\displaystyle k_{a}^{E}}\Vert _{q}
 \displaystyle \Vert{\displaystyle k_{a}^{E}}\Vert _{s'}}
  {\displaystyle  \Vert{\displaystyle k_{a}^{E}}\Vert
    _{p'}\|k_{a}^{E}\|_2^2}.
\eeqa
Since $E$ is one-component %we get from \cite[Corollary to Theorem 1.3]{Al1} 
%by Al
we have \eqref{Aleks}, i.e.\
%\beqa
$\displaystyle \displaystyle \Vert{\displaystyle k_{a}^{E}}\Vert _{r}
 \simeq \displaystyle \left({\displaystyle \frac{\displaystyle 1-\displaystyle 
\left\vert{\displaystyle E(a)}\right\vert ^{2}}{\displaystyle 1-\displaystyle 
\left\vert{\displaystyle a}\right\vert ^{2}}}\right) ^{1/r'},
$
%\eeqa
where $\frac{1}{r}+\frac{1}{r'}=1$. 
Clearly $1/q'+1/s-1/p+2\times 1/2=1/q'+1/q-1=0$, and
hence  $c_{a}\simeq C,$ 
the constant being independent of $a\in S.$

%(i)  
Next set $h(z)
:=T(\nu)(z):=\sum_{a\in S}^{}{\nu _{a}c_{a}\rho _{a}k_{q,a}^{E}}$. 
Then, because $\rho _{a}(b)=\delta _{ab}\Vert{k_{a}^{I}}\Vert _{p'}:$
\beqa
 \displaystyle \forall a\in S,\ h(a)=\nu _{a}c_{a}
  \displaystyle \Vert{\displaystyle 
  k_{a}^{I}}\Vert _{p'}k_{q,a}^{E}(a).
\eeqa

Recall that $\displaystyle k_{q,a}^{E}(a)={\displaystyle k_{a}^{E}(a)}/
{\displaystyle \displaystyle \Vert{\displaystyle k_{a}^{E}}\Vert
  _{q}}$.
%and $\displaystyle 
%c_{a}:=({\displaystyle \Vert{\displaystyle k_{a}^{E}}\Vert _{s'}
%\Vert{\displaystyle k_{a}^{E}}\Vert _{q}})/({\displaystyle
%\Vert{\displaystyle k_{a}^{E}}\Vert _{p'}k_{a}^{E}(a)})$.
Hence
\beqa
 h(a)=\nu_a c_{a}\|k_a^I\|_{p'}k_{q,a}^{E}(a)
 =\nu_a  \times\frac{\displaystyle \Vert{\displaystyle 
  k_{a}^{E}}\Vert _{q}\displaystyle \Vert{\displaystyle
  k_{a}^{J}}\Vert_{s'}}{\displaystyle \Vert{\displaystyle k_{a}^{I}}\Vert 
  _{p'}k_{a}^{E}(a)}\times
 \|k_a^I\|_{p'}\times \frac{\displaystyle k_{a}^{E}(a)}{\displaystyle 
  \Vert{\displaystyle k_{a}^{E}}\Vert _{q}}
 =\nu _{a}\displaystyle
 \Vert{\displaystyle k_{a}^{J}}\Vert_{s'}
\eeqa
and $h$ satisfies the interpolation condition.

Let us now come to the estimate of the $K_{J}^{s}$ norm of $h.$

Set
\beqa
 f(\epsilon ,z):=\sum_{a\in S}^{}{\lambda _{a}c_{a}\epsilon _{a}\rho
   _{a}(z)},
 \quad \text{and}\quad g(\epsilon ,z):=\sum_{a\in S}^{}{\mu _{a}\epsilon 
_{a}k_{q,a}^{E}(z)}.
\eeqa
Then $\displaystyle h(z)={\mathbb{E}}(f(\epsilon ,z)g(\epsilon ,z))$ 
because $\displaystyle {\mathbb{E}}(\epsilon _{j}\epsilon _{k})=\delta
_{jk}.$

So we get
\beqa
 \displaystyle \vert{\displaystyle h(z)}\vert ^{s}
 =\vert{\displaystyle {\mathbb{E}}(fg)}\vert ^{s}
 \leq ({\mathbb{E}}(\displaystyle 
   \vert{\displaystyle fg}\vert ))^{s}
 \leq {\mathbb{E}}(\displaystyle 
  \vert{\displaystyle fg} \vert ^{s}),
\eeqa
and hence
\beqa
  \Vert{\displaystyle h}\Vert _{s}
 =\displaystyle 
  \left({ \int_{{\mathbb{T}}}^{}{\displaystyle
  \vert{\displaystyle h(z)}\vert ^{s}\,d\sigma (z)}}\right) ^{1/s}
 \leq \displaystyle \left({\displaystyle \displaystyle 
 \int_{{\mathbb{T}}}^{}{{\mathbb{E}}(\displaystyle 
 \vert{\displaystyle fg}\vert ^{s})\,d\sigma (z)}}\right) ^{1/s}.
\eeqa

By H{\"o}lder's inequality, we get
\begin{equation} 
\displaystyle \int_{{\mathbb{T}}}^{}{{\mathbb{E}}(\displaystyle \left\vert{\displaystyle 
fg}\right\vert ^{s})\,d\sigma (z)}={\mathbb{E}}\displaystyle \left[{\displaystyle 
\displaystyle \int_{{\mathbb{T}}}^{}{\displaystyle \left\vert{\displaystyle 
fg}\right\vert ^{s}\,d\sigma (z)}}\right] \leq \displaystyle \left({\displaystyle 
{\mathbb{E}}\displaystyle \left[{\displaystyle \displaystyle \int_{{\mathbb{T}}}^{}{\displaystyle 
\left\vert{\displaystyle f}\right\vert ^{p}\,d\sigma }}\right] }\right) 
^{s/p}\displaystyle \left({\displaystyle {\mathbb{E}}\displaystyle \left[{\displaystyle 
\displaystyle \int_{{\mathbb{T}}}^{}{\displaystyle \left\vert{\displaystyle 
g}\right\vert ^{q}\,d\sigma }}\right] }\right) ^{s/q}.\label{algUniExt410}
\end{equation}

Now for $ a\in S$, set $ \tilde \lambda _{a}:=c_{a}\lambda _{a}$.
Then $\Vert{\tilde \lambda }\Vert _{p}\leq C\Vert{\lambda }\Vert _{p}$ and 
the first factor in \eqref{algUniExt410} is controlled by (ii) of the
hypotheses of the Lemma:
\begin{equation} 
{\mathbb{E}}\displaystyle \left[{\displaystyle \displaystyle \int_{{\mathbb{T}}}^{}{\displaystyle 
\left\vert{\displaystyle f}\right\vert ^{p}\,d\sigma }}\right] ={\mathbb{E}}\displaystyle 
\left[{\displaystyle \displaystyle \left\Vert{\displaystyle \displaystyle 
\sum_{a\in S}^{}{\lambda _{a}c_{a}\epsilon _{a}\rho _{p,a}}}\right\Vert 
_{p}^{p}}\right] \lesssim \displaystyle \left\Vert{\displaystyle \tilde 
\lambda }\right\Vert _{p}^{p}\lesssim \displaystyle \left\Vert{\displaystyle 
\lambda }\right\Vert _{\displaystyle \ell ^{p}}^{p},\label{algUniExt48}
\end{equation} 
and the constants appearing here do not depend on $N$.

Consider the second factor in \eqref{algUniExt410}.
Fubini's theorem  gives: 
\beqa
 \displaystyle {\mathbb{E}}\displaystyle \left[{\displaystyle
     \displaystyle \int_{{\mathbb{T}}}^{}{\displaystyle 
   \left\vert{\displaystyle g}\right\vert ^{q}\,d\sigma }}\right] 
 =\displaystyle \int_{{\mathbb{T}}}^{}{{\mathbb{E}}\displaystyle 
  \left[{\displaystyle \left\vert{\displaystyle g}\right\vert
      ^{q}}\right] 
   \,d\sigma }.
\eeqa

We apply Khinchin's inequalities to 
$\displaystyle {\mathbb{E}}\displaystyle 
\left[{ \displaystyle \left\vert{\displaystyle g}\right\vert
    ^{q}}\right] $:
\beqa
 \displaystyle {\mathbb{E}}\displaystyle \left[{\displaystyle
     \displaystyle \left\vert{\displaystyle g}\right\vert ^{q}}\right] 
 \simeq \displaystyle \left({\displaystyle \sum_{a\in S}^{}
  {\displaystyle \left\vert{\displaystyle \mu _{a}}\right\vert ^{2}
  \displaystyle \left\vert{\displaystyle k_{q,a}^{E}}\right\vert
  ^{2}}}\right) ^{q/2}.
\eeqa
If $q>2$, then $S$ weakly $q$-Carleson implies
\begin{equation} 
 \displaystyle \int_{{\mathbb{T}}}^{}{{\mathbb{E}}\displaystyle 
  \left[{\displaystyle \displaystyle 
  \left\vert{\displaystyle g}\right\vert ^{q}}\right] \,d\sigma }
 \lesssim \displaystyle \int_{{\mathbb{T}}}^{}{\displaystyle 
  \left({\displaystyle \displaystyle \sum_{a\in S}^{}{\displaystyle 
  \left\vert{\displaystyle \mu _{a}}\right\vert ^{2}\displaystyle\left 
  \vert{\displaystyle k_{q,a}^{E}}\right\vert ^{2}}}\right)^{q/2}\,d\sigma }
 \lesssim \displaystyle \left\Vert{\displaystyle 
   \mu }\right\Vert _{\displaystyle \ell ^{q}}^{q},\label{algUniExt49}
\end{equation} 
where, again, the constants do not depend on $N$.

If $q\leq 2$ then $\displaystyle \displaystyle \left({\displaystyle 
\displaystyle \sum_{a\in S}^{}{\displaystyle 
\left\vert{\displaystyle \mu _{a}}\right\vert ^{2}\displaystyle 
\left\vert{\displaystyle 
k_{q,a}^{E}}\right\vert ^{2}}}\right) ^{q/2}\leq \displaystyle \sum_{a\in 
S}^{}{\displaystyle \left\vert{\displaystyle \mu _{a}}\right\vert ^{q}\displaystyle 
\left\vert{\displaystyle k_{q,a}^{E}}\right\vert ^{q}},$ and integrating 
over $\T$ we get:

\begin{equation} 
\displaystyle \int_{{\mathbb{T}}}^{}{{\mathbb{E}}\displaystyle \left[{\displaystyle 
\displaystyle \left\vert{\displaystyle g}\right\vert ^{q}}\right] \,d\sigma 
}\leq \displaystyle \int_{{\mathbb{T}}}^{}{\displaystyle \left({\displaystyle 
\displaystyle \sum_{a\in S}^{}{\displaystyle \left\vert{\displaystyle 
\mu _{a}}\right\vert ^{q}\displaystyle \left\vert{\displaystyle k_{q,a}^{E}}\right\vert 
^{q}}}\right) \,d\sigma }\leq \displaystyle \sum_{a\in S}^{}{\displaystyle 
\left\vert{\displaystyle \mu _{a}}\right\vert ^{q}\displaystyle \int_{{\mathbb{T}}}^{}{\displaystyle 
\left\vert{\displaystyle k_{q,a}^{E}}\right\vert ^{q}\,d\sigma }}=\displaystyle 
\left\Vert{\displaystyle \mu }\right\Vert _{\displaystyle \ell ^{q}}.\label{algUniExt152}
\end{equation} 

So putting \eqref{algUniExt48} and \eqref{algUniExt49} or \eqref{algUniExt152} 
in \eqref{algUniExt410} we get that $S$ is an interpolating
sequence  for $K^s_J$. Clearly the operator $T$ is a bounded linear
interpolation operator. 
\end{proof}

We are now in a position to prove the main result of this paper.
%Recall that an interpolating sequence $S$ for a space $X$ is said to 
%have the linear extension property (L.E.P.) if there exists
%a linear bounded operator $L$ from the trace space $X|S$ into $X$
%such that $f_v:=L((v_a)_{a\in S})\in X$ gives a function interpolating $v$:
%$f_v(b)=L((v_a)_{a\in S})(b)=v_b$.

\begin{theorem}\label{thm3.2} 
Let $1<p\leq 2,\ 1\leq s<p$ and $q$  such that $\displaystyle 
\frac{\displaystyle 1}{\displaystyle s}=\frac{\displaystyle 1}{\displaystyle 
p}+\frac{\displaystyle 1}{\displaystyle q}.$ Suppose that 
\begin{itemize}
\item[(i)] the dual sequence $\{\rho _{p,a}\}_{a\in S}$ 
exists and is norm bounded in $K_{I}^{p}$, 
\item[(ii)] $\Vert{k_{a}^{J}}\Vert _{s'}\simeq 
 \frac{\displaystyle \Vert{k_{a}^{E}}\Vert _{s'}
  \Vert{\displaystyle k_{a}^{I}}\Vert _{p'}}
  {\displaystyle \Vert{k_{a}^{E}}\Vert _{p'}}$ and
\item[(iii)] $S$ is weakly $q$-Carleson in $K_{E}^{q}.$
\end{itemize}
Then $S$ is $K_{J}^{s}$-interpolating 
and there exists a bounded linear interpolation operator.
%with the $L.E.P..$\ \par
\end{theorem}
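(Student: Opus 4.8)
The plan is to deduce the theorem directly from Lemma~\ref{algUniExt411}. Comparing the hypotheses, conditions (i), (ii), (iii) of the theorem supply, respectively, the dual-boundedness of $S$ in $K^p_I$, condition (i) of the lemma, and (since here $1/q=1/s-1/p$ with $1\le s<p\le 2$ forces $q\ge p'\ge 2$) condition (iii) of the lemma: when $q=2$ the weak $q$-Carleson property is automatic, and when $q>2$ it is precisely hypothesis (iii). Together with the one-componentness of $I$ and $E$ standing throughout this section --- which is what the lemma uses to make the weights $c_a$ comparable to a constant --- all hypotheses of the lemma are in place \emph{except} its Rademacher-averaging condition (ii), namely $\mathbb{E}[\|\sum_{a\in S}\lambda_a\epsilon_a\rho_{p,a}\|_p^p]\lesssim\|\lambda\|_{\ell^p}^p$ for all $\lambda\in\ell^p(S)$. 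Thus the entire content of the theorem is the observation that, in the range $1<p\le 2$, this averaging bound is a free consequence of the mere norm-boundedness $\sup_{a\in S}\|\rho_{p,a}\|_p<\infty$ granted by (i).

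To verify this, I would first write the expectation as an integral over $\T$ and interchange it with the expectation by Fubini, reducing matters to estimating $\int_{\T}\mathbb{E}[|\sum_a\lambda_a\epsilon_a\rho_{p,a}(z)|^p]\,d\sigma(z)$. Khinchin's inequality, applied pointwise in $z$ exactly as in the proof of Lemma~\ref{algUniExt411}, replaces the inner expectation by the square function:
\[
 \mathbb{E}\Bigl[\bigl|\sum_a\lambda_a\epsilon_a\rho_{p,a}(z)\bigr|^p\Bigr]
 \simeq \Bigl(\sum_{a\in S}|\lambda_a|^2|\rho_{p,a}(z)|^2\Bigr)^{p/2}.
\]
The crucial step is that, since $p\le 2$, the exponent $p/2\le 1$ makes $t\mapsto t^{p/2}$ subadditive, so the square function is dominated by the diagonal sum $\sum_a|\lambda_a|^p|\rho_{p,a}(z)|^p$. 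Integrating over $\T$ and using Fubini once more turns this into $\sum_a|\lambda_a|^p\|\rho_{p,a}\|_p^p$, which by the uniform bound on $\|\rho_{p,a}\|_p$ is $\lesssim\|\lambda\|_{\ell^p}^p$, as required.

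The main --- in fact the only --- obstacle is the restriction $p\le 2$: it is precisely the subadditivity of $t\mapsto t^{p/2}$ that lets one pass from the square function to the diagonal $\ell^p$-sum, and for $p>2$ this inequality reverses, so condition (ii) of the lemma can no longer be obtained for free and must be imposed as a genuine extra hypothesis. Having checked (ii), I would simply invoke Lemma~\ref{algUniExt411}, whose proof also absorbs the boundary case $s=1$ via Lemma~\ref{casep=1}, to conclude that $S$ is $K^s_J$-interpolating and that $T(\nu)(a)=\nu_a\|k_a^J\|_{s'}$ is a bounded linear interpolation operator.
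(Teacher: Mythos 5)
Your proposal is correct and follows essentially the same route as the paper's own proof: both reduce the theorem to Lemma \ref{algUniExt411}, note that the only hypothesis not directly supplied is the Bernoulli-averaging bound (ii), and verify it via Fubini, Khinchin's inequalities, the domination of the square function by the diagonal $\ell^p$-sum (valid precisely because $p\le 2$), and the uniform bound $\sup_{a\in S}\|\rho_{p,a}\|_p<\infty$. Your side remarks --- that $q\ge p'\ge 2$, that the weak $q$-Carleson hypothesis is only needed (and trivial for $q=2$), and that the case $s=1$ is absorbed by Lemma \ref{casep=1} inside the lemma's proof --- all match the paper's reasoning.
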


Before discussing special cases we mention a first consequence
(using Proposition \ref{prop2.2} and Fact \ref{fact1})
for the case of unconditionality.

\begin{corollary}\label{cor4.7new}
Suppose the conditions of the preceding theorem fulfilled.
Assume moreover that $J$ is one-component and and that we
have condition (iii) of Fact \ref{fact1}: the measure
$\nu=\sum_{a\in S}\frac{\D 1-|a|^2}{\D 1-|J(a)|^2}\delta_a$
satisfies
\bea
  |\mu|(S(\zeta,r))\le C r
\eea
for every Carleson window $S(\zeta=e^{it},h)$ meeting
the level set $L(J,1/2)$.
Then $(k_a^J/\|k_a^J\|_{s'})_{a\in S}$ is an unconditional
sequence in $K^{s'}_J$.
\end{corollary}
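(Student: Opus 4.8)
The plan is to apply Proposition \ref{prop2.2} to the reflexive Banach space $X=K^s_J$, whose dual is $X^*=K^{s'}_J$ and for which the point evaluation $E_a$ is identified with the reproducing kernel $k_a^J$, so that $\|E_a\|_{X^*}=\|k_a^J\|_{s'}$. Here we work with $1<s<p\le 2$, so that $X$ is reflexive and $s'<\infty$; the borderline $s=1$ is excluded because then $X$ fails to be reflexive and $s'=\infty$, and the unconditionality machinery of Proposition \ref{prop2.2} no longer applies. Under these identifications, assertion (2) of Proposition \ref{prop2.2} is exactly the desired conclusion, namely that $(k_a^J/\|k_a^J\|_{s'})_{a\in S}$ is an unconditional sequence in $K^{s'}_J$ (normalization does not affect unconditionality). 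It therefore suffices to verify assertion (1) of that proposition for the concrete choice $l=l^s$, which is reflexive and ideal.

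First I would check condition (i) of Proposition \ref{prop2.2}, that $S$ is $l^s$-interpolating for $X=K^s_J$. This is precisely the content of Theorem \ref{thm3.2}: under the standing hypotheses it produces a bounded linear interpolation operator $T:l^s(S)\to K^s_J$ with $T(\nu)(a)=\nu_a\|k_a^J\|_{s'}$. Writing $v_a=\nu_a\|k_a^J\|_{s'}$, this says exactly that every $v$ with $(v_a/\|k_a^J\|_{s'})_a\in l^s$ is interpolated by some $f\in K^s_J$, which is the definition of $l^s$-interpolation; equivalently $K^s_J|S\supset l^s(1/\|k_a^J\|_{s'})$.

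Next I would verify condition (ii) of Proposition \ref{prop2.2}. Since $l=l^s$ has $l^*=l^{s'}$ and $E_a/\|E_a\|_{X^*}=k_a^J/\|k_a^J\|_{s'}=k_{s',a}^J$, condition (ii) reads $\|\sum_{a\in S}\mu_a k_{s',a}^J\|_{s'}\le C\|\mu\|_{l^{s'}}$, which is exactly the statement that $S$ is $s'$-Carleson in $K^{s'}_J$. Here is where I would invoke Fact \ref{fact1}, applied to the one-component inner function $J$ with the role of $p$ played by $s$: the geometric Carleson hypothesis of the corollary --- the condition on windows meeting $L(J,1/2)$ for the measure $\nu=\sum_{a\in S}\frac{1-|a|^2}{1-|J(a)|^2}\delta_a$ --- is precisely assertion (iii) of Fact \ref{fact1}, which for a one-component $J$ is equivalent to assertion (i), i.e.\ that $S$ is $s'$-Carleson. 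This delivers condition (ii).

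With (i) and (ii) in hand for $l=l^s$, Proposition \ref{prop2.2} yields assertion (2) and hence the claim. I do not expect a genuine obstacle, since the corollary is an assembly of Theorem \ref{thm3.2}, Fact \ref{fact1}, and Proposition \ref{prop2.2}; the only points needing care are the bookkeeping of exponents and the duality identifications (that $(K^s_J)^*=K^{s'}_J$ with $E_a\leftrightarrow k_a^J$, and that ``$p'$-Carleson'' in Fact \ref{fact1} becomes ``$s'$-Carleson'' here), together with the verification that $l^s$ is reflexive and ideal. One could equally phrase the argument through Corollary \ref{cor2.3new}, combining the inclusion $K^s_J|S\supset l^s(1/\|k_a^J\|_{s'})$ from Theorem \ref{thm3.2} with the reverse inclusion furnished by the $s'$-Carleson embedding to obtain the equality $K^s_J|S=l^s(1/\|k_a^J\|_{s'})$.
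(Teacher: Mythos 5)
Your proposal is correct and is essentially the paper's own argument: the paper gives no separate proof of this corollary, stating only that it follows ``using Proposition \ref{prop2.2} and Fact \ref{fact1}'' together with Theorem \ref{thm3.2}, which is exactly the assembly you carry out (Theorem \ref{thm3.2} for the $l^s$-interpolation, Fact \ref{fact1} applied to $J$ to convert the geometric hypothesis into the $s'$-Carleson embedding, and Proposition \ref{prop2.2} to conclude unconditionality of $(k_a^J/\|k_a^J\|_{s'})_{a\in S}$ in $K^{s'}_J$). Your explicit restriction to $1<s<p$ to keep $K^s_J$ reflexive is a sensible precision the paper leaves implicit.
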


As a corollary we obtain 
the first part of Theorem \ref{thm1}.
%Theorem \ref{thm1} that we repeat here
%for convenience.

\begin{corollary}\label{cor4.7}
Let $1<p\le 2$.
Let $I$ be a one-component singular inner function and
$S\subset \DD$. Suppose
that $\sup_{a\in S}|I(a)|<1$. If
$(k_{a}^I/\|k_{a}^I\|_{p'})_{a\in S}$
is uniformly minimal in $K^{p'}_I$, where $1/p+1/p'=1$ 
then for every $\eps>0$ and
for every $1\le s<p$, 
%$(k_{a}^I/\|k_{a}^I\|_p)_{a\in S}$
$S$ is an interpolating sequence in $K^s_{I^{1+\eps}}$.
%unconditional basis in $K^q_{I^{1+\eps}}$.
\end{corollary}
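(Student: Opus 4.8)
The plan is to derive the statement as a special case of Theorem~\ref{thm3.2}, applied with $E=I^{\eps}$ and therefore $J=IE=I^{1+\eps}$. Fix $\eps>0$ and $1\le s<p$, and let $q$ be determined by $1/s=1/p+1/q$; since $1\le s<p\le 2$ one has $q\ge p'\ge 2$ and $q<\infty$, so we are in the range allowed by the theorem. Because $I$ is singular, $I^{\eps}$ is a well-defined singular inner function (its singular measure is $\eps$ times that of $I$), and since $L(I^{\eps},\delta)=L(I,\delta^{1/\eps})$ for every $\delta$, both $E=I^{\eps}$ and $J=I^{1+\eps}$ are again one-component. Finally $\sup_{a\in S}|E(a)|=\sup_{a\in S}|I(a)|^{\eps}<1$ and $\sup_{a\in S}|J(a)|<1$. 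It then remains to check the three hypotheses of Theorem~\ref{thm3.2}.

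Hypotheses (i) and (ii) are essentially bookkeeping. For (i): by Definition~\ref{defdb}, applied with $X=K^p_I$ (so $X^*=K^{p'}_I$ and $E_a$ is identified with $k^I_a$), the uniform minimality of $(k^I_a/\|k^I_a\|_{p'})_{a\in S}$ in $K^{p'}_I$ is exactly the statement that $S$ is dual bounded in $K^p_I$, which by Hahn--Banach produces a norm-bounded dual sequence $\{\rho_{p,a}\}_{a\in S}\subset K^p_I$ with $\rho_{p,a}(b)=\delta_{ab}\|k^I_b\|_{p'}$. For (ii): since $I$ is singular and $E=I^{\eps}$, we are precisely in case~(3) of Remark~\ref{rem4.1}, so the required equivalence $\|k^J_a\|_{s'}\simeq \|k^E_a\|_{s'}\|k^I_a\|_{p'}/\|k^E_a\|_{p'}$ follows from Aleksandrov's estimate~\eqref{Aleks}; concretely it reduces to $(1-|I(a)|^{2(1+\eps)})^{1/s}\simeq (1-|I(a)|^{2\eps})^{1/q}(1-|I(a)|^2)^{1/p}$, which holds trivially here because $\sup_{a\in S}|I(a)|<1$ makes all three factors comparable to positive constants on $S$.

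The substantive step, and the one I expect to be the main obstacle, is hypothesis (iii): that $S$ is weakly $q$-Carleson in $K^q_E$. Since the $q$-Carleson property implies the weak $q$-Carleson property (\cite[Lemma~3.2]{Amar}), it suffices to prove $S$ is $q$-Carleson in $K^q_E$. Applying Fact~\ref{fact1} to the one-component function $E$ (with Carleson index $q$), this amounts to the measure $\nu_E=\sum_{a\in S}\frac{1-|a|^2}{1-|E(a)|^2}\delta_a$ satisfying the geometric Carleson condition~\eqref{geocarl} on the windows meeting $L(E,1/2)$. As $\sup_{a\in S}|E(a)|<1$ gives $1-|E(a)|^2\simeq 1$ on $S$, the measure $\nu_E$ is comparable to $\nu_0=\sum_{a\in S}(1-|a|^2)\delta_a$, so it is enough that $\nu_0$ be an ordinary Carleson measure. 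This is where the hypothesis $\sup_{a\in S}|I(a)|<1$ enters in an essential way: it forces $\|k^I_a\|_{p'}\simeq\|k_a\|_{p'}$ (again by \eqref{Aleks}), so the rescaled functionals $\phi_a:=\rho_{p,a}\,\|k_a\|_{p'}/\|k^I_a\|_{p'}\in H^p$ remain uniformly bounded and satisfy $\phi_a(b)=\delta_{ab}\|k_a\|_{p'}$; thus $(k_a/\|k_a\|_{p'})_{a\in S}$ is uniformly minimal in $H^{p'}$, i.e.\ $S$ satisfies the classical Carleson condition~\eqref{eq1} (cf.\ \cite{HNP}), whence $\nu_0$ is a Carleson measure. (When $q=2$, the only sub-case with $q\le 2$, weak $q$-Carleson holds trivially and this verification is unnecessary.)

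With (i)--(iii) established, Theorem~\ref{thm3.2} yields that $S$ is $K^s_J=K^s_{I^{1+\eps}}$-interpolating, with a bounded linear interpolation operator, which is the assertion of the corollary.
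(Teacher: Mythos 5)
Your proposal is correct and follows the same skeleton as the paper's proof: apply Theorem \ref{thm3.2} with $E=I^{\eps}$, $J=I^{1+\eps}$; get hypothesis (i) from uniform minimality (i.e.\ dual boundedness), hypothesis (ii) from case (3) of Remark \ref{rem4.1}, and reduce hypothesis (iii) to the classical Carleson condition \eqref{eq1}, which both you and the paper extract from the dual system $(\rho_{p,a})_{a\in S}$ by using Aleksandrov's estimate \eqref{Aleks} together with $\sup_{a\in S}|I(a)|<1$ to trade $\|k^I_a\|_{p'}$ for $\|k_a\|_{p'}$ (your rescaling of $\rho_{p,a}$ and the paper's manipulation with $P_I$ are two ways of saying the same thing). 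The one step where you genuinely diverge is the passage from $S\in(C)$ to the (weak) $q$-Carleson property. The paper argues elementarily in \eqref{scarl}: writing $k^I_a=P_Ik_a$, it combines the unconditional ($l^r$-)basis property of the Hardy-space kernels under \eqref{eq1} with the $L^r$-boundedness of $P_I$ to get the synthesis inequality $\|\sum_{a}\mu_a k^I_{a,r}\|_r\lesssim\|\mu\|_{l^r}$ for all $1<r<\infty$. You instead pass through Aleksandrov's geometric characterization (Fact \ref{fact1}) applied to $E$, showing that $\nu_E\simeq\sum_{a}(1-|a|^2)\delta_a$ is an ordinary Carleson measure and invoking the embedding theorem of \cite{Al1}. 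Your route uses heavier machinery, but it has the merit of verifying condition (iii) literally for $E=I^{\eps}$ (kernels $k^E_{q,a}$), whereas the paper's computation \eqref{scarl} is written for the kernels of $I$ and one must still observe — as is true, by repeating the argument with $P_E$ and $\|k^E_a\|_r\simeq\|k_a\|_r$ — that it transfers to $E$; the paper's route, in exchange, stays elementary, needing only Shapiro--Shields interpolation and the boundedness of the Riesz projection rather than the deep one-component embedding theorem.
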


\begin{proof}[Proof of Corollary \ref{cor4.7}]
Condition (ii) of the theorem follows from the
case (3) of Remark \ref{rem4.1}. 
The condition (i) of the theorem is fulfilled by the fact
that $(k_{a}^I/\|k_{a}^I\|_{p'})_{a\in S}$
is uniformly minimal in $K^{p'}_I$. Let $(\rho_{p,a})_{a\in S}$ be
the corresponding dual family in $K^p_I$.
It remains to check the weak $q$-Carleson
condition. In fact more is true:
%under the conditions of the theorem
Since $I$ is one-component and inner with
$\sup_{a\in S}|I(a)|<1$, we have for every $a\in S$, $1<r<\infty$
\beqa
 \|k_{a}^I\|_r\simeq
 \left(\frac{1-|I(a)|^2}{1-|a|^2}\right)^{1-1/r}
 \simeq \left(\frac{1}{1-|a|^2}\right)^{1-1/r}
 \simeq\|k_{a}\|_r.
\eeqa
Hence, up to some constants $c_a$, $a\in S$, whose moduli are uniformly
bounded above and below we get
\beqa
 \delta_{ab} &=& \langle \rho_{p,a},k_{b}^I/\|k_{b}^I\|_p\rangle
 =c_{a} \langle \rho_{p,a},k_{b}^I/\|k_{b}\|_p\rangle
 =c_{a} \langle \rho_{p,a},P_I(k_{b}/\|k_{b}\|_p)\rangle\\
 &=&c_{a} \langle  P_I\rho_{p,a},k_{b}/\|k_{b}\|_p\rangle\\
 &=&c_{a} \langle  \rho_{p,a},k_{b}/\|k_{b}\|_p\rangle.
\eeqa
Hence $(k_{a}/\|k_{a}\|_{p'})_{a\in S}$ is a uniform minimal
sequence in $H^p$ which by the interpolation results
is %well known to be 
equivalent to $\Lambda\in (C)$. 
(We could also have shown this by using directly
\eqref{unifmin}.) In particular,
$(k_{a}/\|k_{a}\|_{p'})_{a\in S}$ is an unconditional sequence
in any $H^r$, $1<r<\infty$.

From this we can deduce that $S$ is even $r$-Carleson for
any $1<r<\infty$: indeed,
let $(\mu_a)_{a\in S}\in l^r$, then
\bea\label{scarl}
 \left\|\sum_{a\in S}\mu_a k_{a,r}^I\right\|_r^r
 &=&\left\|P_I\sum_{a\in S}\mu_a
   \frac{k_{a}}{\|k_{a}^I\|_r}\right\|_r^r
 \le c \left\|\sum_{a\in S}\mu_a
   \frac{k_{a}}{\|k_{a}^I\|_r}\right\|_r^r
 =c \left\|\sum_{a\in S}\mu_a\frac{\|k_{a}\|_r}{\|k_{a}^I\|_r}
   \frac{k_{a}}{\|k_{a}\|_r}\right\|_r^r\nn\\
 &\simeq& \sum_{a\in S}|\mu_a|^r
 \left(\frac{\|k_{a}\|_q}{\|k_{a}^I\|_r}\right)^r
 \simeq \sum_{a\in S}|\mu_a|^r,
\eea
where we have used that
$\|k_{a}\|_r\simeq\|k_{a}^I\|_r$. This holds in particular for $r=q$, where
$1/s=1/p+1/q$.
\end{proof}

%The preceding proof has not only established the weak
%Using Corollary \ref{cor2.3}, the preceding corollary allows 
%
We are now in a position to deduce also the second part of 
Theorem \ref{thm1}.
%that we repeat here for convenience
%of the reader.

\begin{corollary}\label{cor4.8}
Let $1<p\le 2$.
Let $I$ be a one-component singular inner function and
$S\subset \DD$. Suppose
that $\sup_{a\in S}|I(a)|<1$. If
$(k_{a}^I/\|k_{a}^I\|_{p'})_{a\in S}$
is uniformly minimal in $K^{p'}_I$, where $1/p+1/p'=1$ 
then for every $\eps>0$ and
for every $q<p$, 
$(k_{a}^I/\|k_{a}^I\|_{q'})_{a\in S}$
%$S$ is an interpolating sequence in $K^q_{I^{1+\eps}}$.
is an unconditional basis in $K^{q'}_{I^{1+\eps}}$.
\end{corollary}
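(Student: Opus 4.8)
The plan is to upgrade the interpolation statement of Corollary~\ref{cor4.7} to unconditionality by supplying the missing Carleson embedding and then invoking the equivalence of Corollary~\ref{cor2.3new} (equivalently Proposition~\ref{prop2.2}, or Corollary~\ref{cor4.7new}). Write $J=I^{1+\eps}$ and fix $q<p$, so that $q'>p'\ge 2$. Applying Corollary~\ref{cor4.7} with $s=q$ already shows that $S$ is interpolating for $K^q_J$; in the notation of Proposition~\ref{prop2.2} with $X=K^q_J$ and $l=l^q$ this is exactly the inclusion $K^q_J|S\supset l^q(1/\|k_a^J\|_{q'})$, i.e.\ condition (i). It remains to produce the reverse inclusion, which is condition (ii): the $q'$-Carleson estimate $\|\sum_{a\in S}\mu_a k^J_a/\|k^J_a\|_{q'}\|_{q'}\le C\|\mu\|_{q'}$.

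For the Carleson embedding I would argue exactly as in the proof of Corollary~\ref{cor4.7}. First, $J=I^{1+\eps}$ is again a one-component singular inner function (its level sets satisfy $L(J,\delta)=L(I,\delta^{1/(1+\eps)})$), and $\sup_{a\in S}|J(a)|=\sup_{a\in S}|I(a)|^{1+\eps}<1$. Hence Aleksandrov's estimate~\eqref{Aleks} gives $\|k_a^J\|_r\simeq(1-|a|^2)^{-(1-1/r)}\simeq\|k_a\|_r$ for every $1<r<\infty$ and every $a\in S$. Running the computation~\eqref{scarl} with $P_J$ and $k_a^J$ in place of $P_I$ and $k_a^I$ --- the only inputs being this norm equivalence and the boundedness of $P_J$ on $L^r(\T)$ --- shows that $S$ is $r$-Carleson in $K^r_J$ for all $1<r<\infty$, in particular $q'$-Carleson. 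Since the uniform minimality hypothesis already forces $S\in(C)$, as extracted in the proof of Corollary~\ref{cor4.7}, there is no circularity here.

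With both (i) and (ii) in hand, Proposition~\ref{prop2.2} (or Corollary~\ref{cor2.3new}) applied to $X=K^q_J$, $l=l^q$ yields $K^q_J|S=l^q(1/\|k_a^J\|_{q'})$ and therefore that $(k_a^J/\|k_a^J\|_{q'})_{a\in S}$ is an unconditional sequence --- indeed an $l^{q'}$-basis --- in $(K^q_J)^*=K^{q'}_J=K^{q'}_{I^{1+\eps}}$. Alternatively one may verify, via Fact~\ref{fact1} applied to the one-component function $J$, that the $q'$-Carleson property is equivalent to the geometric Carleson condition for the measure $\sum_{a\in S}\frac{1-|a|^2}{1-|J(a)|^2}\delta_a$ on windows meeting $L(J,1/2)$, and then quote Corollary~\ref{cor4.7new} directly. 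Finally, since $\|k_a^I\|_{q'}\simeq\|k_a\|_{q'}\simeq\|k_a^J\|_{q'}$ by the same one-component estimates, the normalizing constants agree and the statement takes the asserted form for $(k_a^I/\|k_a^I\|_{q'})_{a\in S}$ in $K^{q'}_{I^{1+\eps}}$.

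I expect the main obstacle to be the Carleson embedding rather than the interpolation: Corollary~\ref{cor4.7} hands us interpolation for free, but unconditionality needs the two-sided comparison~\eqref{equinorm}, and securing the upper (Carleson) bound in the \emph{enlarged} space $K^{q'}_J$ is exactly the point where one must know that $J=I^{1+\eps}$ inherits one-componentness and the separation $\sup_{a\in S}|J(a)|<1$ from $I$, so that Aleksandrov's two-sided kernel estimate and the $L^r$-boundedness of $P_J$ remain available.
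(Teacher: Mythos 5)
Your proposal is correct in substance and follows the paper's own proof: the paper likewise combines Corollary~\ref{cor4.7} (interpolation in $K^q_{I^{1+\eps}}$) with Corollary~\ref{cor2.3}, and gets the required $q'$-Carleson bound ``at once from \eqref{scarl} by taking $r=s'$'', which in context means exactly the computation you carry out --- rerunning \eqref{scarl} with $P_J$ and $k_a^J$ for $J=I^{1+\eps}$, using $S\in (C)$ and Aleksandrov's estimate \eqref{Aleks}. You are in fact more explicit than the paper on the point that legitimizes this, namely that $J$ inherits singularity, one-componentness and $\sup_{a\in S}|J(a)|<1$ from $I$, and your observation that the $S\in(C)$ input comes from Corollary~\ref{cor4.7}'s proof (so no circularity) is accurate.

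The one step that does not hold up is your closing sentence: the comparability $\|k_a^I\|_{q'}\simeq\|k_a^J\|_{q'}$ does not convert unconditionality of $(k_a^J/\|k_a^J\|_{q'})_{a\in S}$ into unconditionality of $(k_a^I/\|k_a^I\|_{q'})_{a\in S}$, because $k_a^I=(1-\overline{I(a)}I)k_a$ and $k_a^J=(1-\overline{J(a)}J)k_a$ are genuinely different functions, not scalar multiples of one another, so no renormalization passes from one family to the other. This defect, however, is inherited from the paper rather than introduced by you: the paper's proof (Corollary~\ref{cor2.3} applied to the inner function $J$) likewise yields unconditionality only for the kernels $k_a^{I^{1+\eps}}$, while the statement of the corollary is written with $k_a^I$, and this discrepancy is never reconciled there. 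Read with $k_a^{I^{1+\eps}}$ in place of $k_a^I$, your argument and the paper's coincide and are complete.
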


So in the present situation, we {\it increase} the space in the
direction of the inner function and we {\it decrease} the space
by increasing the power of integration to deduce unconditionality
from uniform minimality.
%\begin{remark} 

Let us make another observation.
%Observe that b
%By \cite{Al1}, if $E$ is "one component" then 
%$\Vert{\displaystyle k_{a}^{E}}\Vert _{2}^{2}\simeq 
%\Vert{\displaystyle k_{a}^{E}}\Vert _{\infty }.$ 
%\ \par
%\end{remark}
%\begin{remark} 
In \cite[D4.4.9(5)]{Nik} it is stated
(in conjunction with \cite[Lemma D4.4.3]{Nik}) that
under the Carleson condition $S\in (C)$
the condition $\sup_{a\in S}|I(a)|<1$ is equivalent
to the existence % {\it there exists}
of $N\in\N$ such
that $(k_a^{I^N}/\|k_a^{I^N}\|_2)_{a\in S}$ is 
an unconditional sequence in $K^2_{I^N}$.
In the present situation, when $(k_a^{I}/\|k_a^{I}\|_{p'})
_{a\in S}$, $p'\ge 2$,
is supposed uniformly minimal (which itself implies the Carleson
condition under the assumptions on $I$ and $S$; we do not know
whether the Carleson condition could imply the uniform minimality
in our context) then instead
of taking $I^N$ we can choose $I^{1+\eps}$ for any $\eps>0$ (paying
the price of replacing $p'$ by $q'>p'$).
%In particular, if $E=I,\ J=I^{2}$ and  $E$ is "one component" then 
%\eqref{lemmeP1} holds.
%%One could compare
%%this e.g.\ with \cite[D4.4.9(5)]{Nik} where it is stated
%Note that in case 
%(in conjunction with \cite[Lemma D4.4.3]{Nik}) that
%under the Carleson condition $S\in (C)$, {\it there exists} $N$ such
%that $S$ is interpolating in $K^2_{I^N}$. So, in the special situation
%of one-component inner functions, lowering the integration power to
%$1$ allows to choose the second best $N$, i.e.\ $N=2$,
%to obtain interpolation.
%\end{remark}

\begin{proof}[Proof of Corollary \ref{cor4.8}]
In view of the preceding corollary and Corollary \ref{cor2.3},
it remains to check that $S$ is $(l^s)^*=l^{s'}$-Carleson, which
follows at once from \eqref{scarl} by taking $r=s'$.
\end{proof}

%{\hskip 1.8em}Proof.\ \par

\begin{proof}[Proof of the theorem]
It remains to prove that the hypotheses of the theorem imply those of 
Lemma~\ref{algUniExt411}.%\ \par
%{\hskip 1.8em}
We thus have to prove that
\beqa
 \displaystyle {\mathbb{E}}\displaystyle 
  \left[{\displaystyle \displaystyle \left\Vert{\displaystyle 
  \displaystyle \sum_{a\in S}^{}{\lambda _{a}\epsilon _{a}\rho
    _{p,a}}}
   \right\Vert _{p}^{p}}\right] 
 \lesssim \displaystyle \left\Vert{\displaystyle \lambda }
   \right\Vert _{\displaystyle \ell ^{p}}^{p}.
\eeqa
under the assumption that the dual sequence 
$\{\rho _{p,a}\}_{a\in S}$ is uniformly bounded in 
$\displaystyle K_{I}^{p}$: 
$\displaystyle \sup _{a\in S}\ \displaystyle \Vert{\displaystyle 
\rho _{p,a}}\Vert _{p}\leq C.$

By Fubini's theorem
\beqa
 \displaystyle {\mathbb{E}}\displaystyle 
  \left[{\displaystyle \displaystyle \left\Vert{\displaystyle 
 \displaystyle \sum_{a\in S}^{}{\lambda _{a}\epsilon _{a}\rho _{p,a}}}
  \right\Vert _{p}^{p}}\right] 
 =\displaystyle \int_{{\mathbb{T}}}^{}{{\mathbb{E}}\displaystyle 
 \left[{\displaystyle \displaystyle \left\vert{\displaystyle \displaystyle 
  \sum_{a\in S}^{}{\lambda _{a}\epsilon _{a}\rho _{p,a}}}\right\vert^{p}}
 \right] \,d\sigma },
\eeqa
and by Khinchin's inequalities we have
\beqa
 \displaystyle {\mathbb{E}}\displaystyle \left[{\displaystyle 
  \displaystyle \left\vert{\displaystyle 
  \displaystyle \sum_{a\in S}^{}{\lambda _{a}\epsilon _{a}\rho
    _{p,a}}}\right\vert ^{p}}\right] 
 \simeq \displaystyle \left({\displaystyle \displaystyle \sum_{a\in
       S}^{}{\displaystyle \left\vert{\displaystyle \lambda _{a}}
  \right\vert^{2}\displaystyle \left\vert{\displaystyle \rho _{p,a}}
  \right\vert ^{2}}}\right) ^{p/2}.
\eeqa
Now, since $p\leq 2$, we have
\beqa
 \displaystyle \displaystyle \left({\displaystyle \displaystyle 
  \sum_{a\in S}^{}{\displaystyle \left\vert{\displaystyle \lambda _{a}}
  \right\vert ^{2}\displaystyle \left\vert{\displaystyle 
  \rho _{p,a}}\right\vert ^{2}}}\right) ^{1/2}
 \leq \displaystyle \left({\displaystyle 
  \displaystyle \sum_{a\in S}^{}{\displaystyle 
  \left\vert{\displaystyle \lambda _{a}}\right\vert ^{p}\displaystyle 
  \left\vert{\displaystyle \rho _{p,a}}\right\vert ^{p}}}\right)^{1/p},
\eeqa
and hence
\beqa
 \displaystyle \displaystyle \int_{{\mathbb{T}}}^{}{{\mathbb{E}}
  \displaystyle \left[{\displaystyle \displaystyle 
  \left\vert{\displaystyle \displaystyle \sum_{a\in S}^{}{\lambda _{a}
  \epsilon _{a}\rho _{p,a}}}\right\vert ^{p}}\right] \,d\sigma }
 \leq \displaystyle \int_{{\mathbb{T}}}^{}{\displaystyle 
  \left({\displaystyle \displaystyle \sum_{a\in S}^{}{\displaystyle 
  \left\vert{\displaystyle \lambda _{a}}\right\vert ^{p}\displaystyle 
  \left\vert{\displaystyle \rho _{p,a}}\right\vert ^{p}}}\right)}\,d\sigma 
 =\displaystyle \sum_{a\in S}^{}{\displaystyle 
  \left\vert{\displaystyle \lambda _{a}}\right\vert ^{p}\displaystyle 
  \left\Vert{\displaystyle \rho _{p,a}}\right\Vert _{p}^{p}}.
\eeqa
So, finally
\beqa
 \displaystyle {\mathbb{E}}\displaystyle 
  \left[{\displaystyle \displaystyle \left\Vert{\displaystyle 
   \displaystyle \sum_{a\in S}^{}{\lambda _{a}\epsilon _{a}\rho
     _{p,a}}}\right\Vert _{p}^{p}}\right] 
 \lesssim \sup _{a\in S}\ \displaystyle \left\Vert{\displaystyle 
    \rho _{p,a}}\right\Vert _{p}^{p}\displaystyle \left\Vert{\displaystyle 
    \lambda }\right\Vert _{p}^{p},
\eeqa
and consequently the theorem holds. 
\end{proof}

\end{document}